\documentclass[11pt]{article}
\usepackage{anysize}
\marginsize{2.0cm}{2.0cm}{2.0 cm}{2.0 cm}
\usepackage{setspace}
\usepackage{graphicx}
\usepackage{color}
\usepackage{epstopdf}
\usepackage[titletoc]{appendix}
\usepackage{graphicx}
\usepackage{latexsym,amsfonts,amsmath,amssymb}
\newtheorem{remark}{Remark}

\newtheorem{condition}{Condition}
\newtheorem{theorem}{Theorem}
\newtheorem{lemma}{Lemma}
\newtheorem{corollary}{Corollary}
\newtheorem{definition}{Definition}
\newtheorem{proposition}{Proposition}

\def\RR{\mathbb{R}}

\def\PP{\mathsf{P}}

\def\II{\mathbb{I}}

\def\EE{\mathsf{E}}
\def\PP{\mathsf{P}}

\begin{document}
	
	\title{Constrained optimal impulse control and inventory model}
	
	
	\author{Alexey Piunovskiy\thanks{Department of Mathematical Sciences, University of
			Liverpool, Liverpool, U.K. E-mail: piunov@liv.ac.uk.} }

	\maketitle
	
	\par\noindent\textbf{Abstract}. 
In this article, we consider the deterministic impulsively controlled system with infinite horizon and several discounted objective functionals. The constructed optimal control problem with functional constraints is reformulated as a Markov decision process, leading to (primal) convex and linear programs in the space of so-called occupation measures. We construct the dual programs and investigate the solvability of  all the  programs. Example of an inventory model illustrates the developed theory.

	\bigskip
	\par\noindent\textbf{Keywords.} Impulse control.  Markov decision process (MDP). Discounted costs. Functional constraints. Optimal strategy. Linear program. Convex program. Duality.
	\bigskip
	
	\par\noindent{\bf AMS 2000 subject classification:}  Primary 49N25; Secondary 90C05, 90C40.

	\section{Introduction}\label{sec1}
The literature on optimal impulse control is quite vast in terms of both theoretical developments and real-life applications. In the last years, along with the dynamic programming \cite{benk,jelito,PiunovskiySasha:2018,ppt,zhou}, the linear programming approach also appeared in \cite{helmes,b13pp,pz1,pz2}, and some duality issues were presented in \cite{helmes,pz2}. The Pontryagin maximum principle and similar methods were  developed in \cite{aru,leander,Miller:2003}. For applications of the optimal impulse control to, e.g., epidemiology and inventory, see \cite{benk,helmes,PiunovskiySasha:2018,ppt}.
There was no intention to present a survey on impulse control, we only tried to show the diversity of models and methods in the most relevant papers.

In the case of finite-dimensional Euclidean state space, sometimes
`impulsive control' means that
\begin{equation}\label{e171}
	x(t)=x_0+\int_0^t f(x(u))du+\int_0^t G(x(u))dw(u),
\end{equation}
where $x_0\in\RR^n$, $f(\cdot):~\RR^n\to\RR^n$ and the $(n\times m)$ matrix-valued function $G(\cdot)$ on  $\RR^n$ are fixed, and the function $w(\cdot):[0,\infty)\to\RR^m$ of bounded variation is to be chosen (i.e., plays the role of control).   A little more generally, one can  consider the models where $w(\cdot)$ is a (Borel) measure on $[0,\infty)$, and actually `impulses' correspond to the singular (with respect to the Lebesgue measure) component of $w(\cdot)$. Different modifications of the model (\ref{e171}) can be found in \cite{biba1,aru,b13pp}. It looks like such complicated models are not necessary for practical application, and many authors considered the following special case: 
$$	w(u)=\sum_{i=1}^\infty \sum_{j=1}^{i-1} b_j\II\{t_{i-1}\le u < t_i\}=\sum_{i=1}^\infty b_i\II\{t_i\le u\}$$
with $t_0:=0<t_1<t_2<\ldots$ and $b_j\ne 0$.
In other words, the (impulsive) deterministic control strategy is just the sequence $(\theta_1=t_1-t_0,b_1),(\theta_2=t_2-t_1,b_2),\ldots$. Such models appeared in \cite{benk,jelito,leander,Miller:2003,PiunovskiySasha:2018,ppt,pz1,pz2,zhou}; they are most similar  to the model investigated in the current article.

Roughly speaking, the linear programming approach  \cite{helmes,b13pp,pz1,pz2} means the following. The optimal control problem is replaced by the linear program in the space of so called occupation measures, which satisfy the specific characteristic equation, and the objective has the form of the integral of the cost function with respect to an occupation measure. After that, the optimal control strategy  can be retrieved from the optimal occupation measure. Note that dynamic programming can be also useful when investigating such linear programs \cite{pz2}.
	
Comparing with the existing studies, one can say the following.
\begin{itemize}
\item The underlying dynamical system was described as a flow in the metric space in \cite{jelito,PiunovskiySasha:2018,pz1,pz2}, but none of these works considered the constrained problem apart from \cite{pz1}, where the objectives were the total {\sl undiscounted} costs. 	It is well known \cite{PiunovskiySasha:2018} that discounted objective functional is a special case of the (undiscounted) total cost, but  for this special case, more general statements can be proved.
\item The functional constraints appeared in \cite{Miller:2003,pz1}, but the case of total {\sl discounted} costs was not considered there. By the way, the time horizon was finite in \cite{Miller:2003}. A rather general discounted model was studied in \cite{benk}, but without functional constraints.
\item Involving Markov decision processes is a rather new trick, used only in \cite{PiunovskiySasha:2018,pz1,pz2}, but again not for the case of discounted cost with functional constraints.
\item The duality issues were studied in \cite{helmes,pz2}, but only for the unconstrained case, and the objective functional was undiscounted in \cite{pz2}.
\item Finally, article \cite{jelito} is about the stochastic system with the specific objective functional with generalized discounting, but again without functional constraints.
The authors of \cite{helmes} investigated the very special model, namely the general stochastic inventory system, without functional constraints.
\end{itemize}
To summarize, the impulsively controlled deterministic dynamical system in a general Borel space with infinite horizon, total discounted costs and functional constraints (and even  without constraints) was not studied, and the current paper aims to fill these gaps. Note that the unconstrained model is a special case, and all the further presented results are applicable.
	
The rest of this article is organized as follows. Problem statement, reformulation as an MDP and preliminary results are given in Sections \ref{sec2} and \ref{sec3}. In the main sections \ref{sec5} and \ref{sec6}, two pairs of associated convex programs are investigated. In Section \ref{sec7} the developed theory is illustrated by the standard inventory model which certainly is not as general as in \cite{benk,helmes}. But the obtained constraint-optimal strategy exhibits the interesting property: one has to wait for a while before ordering the product, keeping the inventory at zero level: see Figure \ref{f1}.

The proof of all statements is postponed to Appendix.

Measures are allowed to be infinite-valued, $\RR_+:=[0,\infty)$, $Y^c$  is the complement of the set $Y$, $\delta_a(dy)$ is the Dirac measure concentrated at point $a$, w.r.t. means `with respect to', $e^{-\infty}:=0$.
	
	\section{Preliminary model description}\label{sec2}
	The {\sl state space} ${\bf X}$  \index{state space} of the system under investigation is a non-empty Borel subset of a complete separable metric space (i.e., Polish space)  with metric $\rho_X$ and the Borel $\sigma$-algebra ${\cal B}({\bf X})$. Typical examples are the finite-dimensional Euclidean spaces $\RR^n$. The dynamics is described by the {\sl flow} \index{flow} $\phi(\cdot,\cdot):~{\bf X}\times[0,\infty)\to{\bf X}$, where the second argument $t\in[0,\infty)$ is just time elapsed from the moment when the state was $x\in{\bf X}$, the first argument.  This flow must satisfy the following intuitively clear properties:
	\begin{itemize}
		\item $\phi(x,0)=x$ for all $x\in{\bf X}$;
		\item the mapping $\phi(\cdot,\cdot)$ is measurable;
		\item $\lim_{t\downarrow 0}\phi(\phi(x,s),t)=\phi(x,s)$ for all $x\in{\bf X}$ and $s\in[0,\infty)$;
		\item {\sl semigroup property}: \index{semigroup property} $\phi(x,(s+t))=\phi(\phi(x,s),t)$ for all $x\in{\bf X}$ and $(s,t)\in[0,\infty)\times[0,\infty)$.
	\end{itemize}
	The third property means that the trajectories of the system are continuous from the right. The fourth property means that the movement from $x\in{\bf X}$ to $\phi(x,s)$ on the time interval $(0,s]$ and the further movement from $\phi(x,s)$ to $\phi(\phi(x,s),t)$ on the time interval $(s,s+t]$ can be equivalently represented as the combined movement from $x$ to $\phi(x,s+t)$ on the interval $(0,s+t]$. For example, in case of ${\bf X}=\RR^n$ the flow can come from a well posed ordinary differential equation.
	
	Suppose at the zero time moment the state of the system was $x_0\in{\bf X}$. As time goes on, the process moves along the flow $\phi$ leading to the {\sl trajectory} $\{\phi(x_0,t),~t\ge 0\}$. We are going to study $J+1\ge 1$ objective criteria to be minimized, coming from the following components. There are measurable {\sl gradual} cost rates $C^g_j(x)\in[0,\infty)$ associated with any state $x\in{\bf X}$, $j=0,1,\ldots,J$. 
	At particular time moments $0\le t_1\le t_2\le\ldots<\infty$ the decision maker applies impulsive control actions $a_1,a_2,\ldots$, elements of the {\sl action space}  $\bf A$ being a non-empty Borel subset of a complete separable metric space with metric $\rho_A$ and the Borel $\sigma$-algebra ${\cal B}({\bf A})$. Each {\sl action} (or {\sl impulse})  $a\in{\bf A}$ applied at the current state $x\in{\bf X}$ results in the immediate jump of the process to the new state $l(x,a)\in{\bf X}$. The case when $l(x,a)=x$ for some $a\in{\bf A}$ is not excluded.
	The mapping $l(\cdot,\cdot):~{\bf X}\times{\bf A}\to{\bf X}$ is assumed to be measurable. Other consequences are the measurable {\sl lump sum} costs  $C^I_j(x,a)\in[0,\infty)$, $j=0,1,\ldots,J$.
	Below, $\theta_i:=t_i-t_{i-1},~i=1,2,\ldots;~t_0:=0$, so that $t_i=\sum_{k=1}^i\theta_k,~i=0,1,2,\ldots$. If there are $N<\infty$ impulses being applied at the time moments $t_1,t_2,\ldots,t_N$, then the $j$-th total associated discounted cost equals
	\begin{eqnarray}
		{\cal W}_j(x_0)&=&\sum_{i=1}^N\left\{\int_0^{\theta_i} e^{-\alpha(t_{i-1}+t)} C^g_j(\phi(x_{i-1},t))~dt+e^{-\alpha t_i}C^I_j(\phi(x_{i-1},\theta_i),a_i)\right\}\nonumber\\
		&&+\int_0^\infty e^{-\alpha(t_{N}+t)} C^g(\phi(x_N,t))~dt,~~~j=0,1,\ldots,J,\label{eq1}
	\end{eqnarray}
	where $\alpha>0$ is the fixed discount factor.
	In the above formula, $x_i:=l(\phi(x_{i-1},\theta_i),a_i)$, $i=1,2,\ldots,N$. 
	In general, the number of impulses is not bounded {\it a priori}, as it depends on the control strategy. In case $t_\infty:=\lim_{i\to\infty}t_i<\infty$, we don't consider the process after the time moment $t_\infty$. In what follows, we allow $t_i$ and $a_i$ to be randomized, so that in the rigorous formulae for the objectives, presented in the next section, mathematical expectation will appear.
	
	\section{Reformulation as an MDP and auxiliary results}\label{sec3}
	Firstly, we provide some informal preliminary observations.
	We add the artificial isolated point $\Delta$ to $\bf X$ and say that the control process is over at time moment $t_i$, without any future costs, if $x_i=\Delta$. Quite formally, $x_{i+1}=x_{i+2}=\ldots = \Delta$. Now the coefficient $e^{-\alpha t_{i-1}}$ in formula (\ref{eq1}) can be understood as the probability of the event $\Delta\ne x_{i-1}\in{\bf X}$.
	
	As soon as the sequence $\{(\theta_i,a_i)\}_{i=1}^\infty$ with $\theta_i\in[0,\infty)$, $a_i\in{\bf A}$ is fixed, the dynamics can be encoded as the following sequence
	$${\bf X}\ni x_0=X_0\to (\theta_1,a_1)\to X_1\to(\theta_2,a_2)\to\ldots.$$
	Here and below, capital letters are for random elements. If $X_{i-1}\in{\bf X}$ for $i=1,2,\ldots$, then $\PP(X_i=\Delta|X_{i-1}\in{\bf X})=1-e^{-\alpha \theta_i}$ and $\PP(X_i=l(\phi(X_{i-1},\theta_i),a_i)|X_{i-1}\in{\bf X})=e^{-\alpha\theta_i}$ leading to $\PP(X_i\in{\bf X})=e^{-\alpha t_i}$. The state $\Delta$ is costless and absorbing: $\PP(X_i=\Delta|X_{i-1}=\Delta)=1$. 
	If $\theta_{N+1}=\infty$ and $\theta_1,\theta_2,\ldots,\theta_N<\infty$, that is, only $N<\infty$ impulses are applied, then we  put $X_{N+1}=\Delta$, and the dynamics is encoded as
	$${\bf X}\ni x_0=X_0\to (\theta_1,a_1)\to X_1\to\ldots\to(\infty,a_{N+1})\to X_{N+1}=\Delta\to \ldots.$$
	Now formula (\ref{eq1}), corresponding to $N<\infty$ impulses, takes the form
	$${\cal W}_j(x_0)=\EE\left[\sum_{i=1}^{N+1}\bar C_j(X_{i-1},(\theta_i,a_i))\right]=\EE\left[\sum_{i=1}^{\infty}\bar C_j(X_{i-1},(\theta_i,a_i))\right],~~~j=0,1,\ldots,J,$$
	where $\theta_{N+1}=\infty$; $X_{N+1}=X_{N+2}=\ldots=\Delta$; $e^{-\alpha\infty}:=0$;
	$$\bar C_j(x,(\theta,a)):=\left\{\begin{array}{ll}
		\int_0^\theta e^{-\alpha t}C^g_j(\phi(x,t))dt
		+e^{-\alpha\theta}C^I_j(\phi(x,\theta),a), & \mbox{ if } x\in{\bf X};\\
		0, & \mbox{ if } x=\Delta.\end{array}\right.$$
	Since $C^g_j(\cdot),C^I_j(\cdot,\cdot)\ge 0$, all the integrals and mathematical expectations are well defined.
	
	After these preliminary observations, it becomes clear that the impulse control under study is just a Markov decision process (MDP) with the following elements:
	\begin{itemize}
		\item ${\bf X}_\Delta:={\bf X}\cup\{\Delta\}$ is the state space;
		\item ${\bf B}:=[0,\infty]\times{\bf A}$ is the action space equipped with the natural product $\sigma$-algebra;
		\item $Q(dy|x,b=(\theta,a)):=\left\{\begin{array}{ll}
			e^{-\alpha\theta}\delta_{l(\phi(x,\theta),a)}(dy)+(1-e^{-\alpha\theta})\delta_\Delta (dy), & \mbox{if } x\ne\Delta,\theta\ne\infty;\\
			\delta_\Delta(dy) & \mbox{otherwise} \end{array}\right.$ is the transition probability;
		\item $\bar C_j(x,b=(\theta,a))$, $j=0,1,\ldots,J$ are the one-step costs.
	\end{itemize}
	
	As usual, we introduce trajectories
	$$\omega=(x_0,b_1=(\theta_1,a_1),x_1,b_2=(\theta_2,a_2),x_2,\ldots)\in({\bf X}_\Delta\times{\bf B})^\infty=:\Omega;$$
	$\Omega$ is the sample space. Capital letters $X_i,B_i,\Theta_i,T_i=\sum_{k=1}^i \Theta_i,A_i$ denote the corresponding functions (projections) of $\omega\in\Omega$, i.e., random elements. $\cal F$ is the natural product $\sigma$-algebra on $\Omega$. Finite sequences of the form
	$$h_i=(x_0,b_1=(\theta_1,a_1),x_1,b_2=(\theta_2,a_2),\ldots,x_i)$$
	will be called $i$-histories, $i=0,1,\ldots$. The space of all such $i$-histories will be denoted as ${\bf H}_i$. We endow it with the $\sigma$-algebra ${\cal B}({\bf H}_i)$, which is the restriction of $\cal F$ to ${\bf H}_i$.
	
	\begin{definition}\label{d1}
		\begin{itemize}
			\item[(a)]
			A (control) strategy $\pi=\{\pi_i\}_{i=1}^\infty$ is a sequence of stochastic kernels $\pi_i$ on $\bf B$ given $\textbf{H}_{i-1}$.
			\item[(b)] A strategy $\pi$ is called stationary and denoted as $\pi^s$ if $\pi_i(db|h_{i-1})=\pi^s(db|x_{i-1})$.
			\item[(c)] A strategy ${\pi}$ is called deterministic stationary and denoted as $f$, if for all $i=1,2,\ldots$, $\pi_i(db|h_{i-1})=\delta_{f(x_{i-1})}(db)$, where $f:\textbf{X}_\Delta\to{\bf B}$ is a measurable mapping.
		\end{itemize}
	\end{definition}
	
	In terms of interpretation, under a strategy $\pi=\{\pi_i\}_{i=1}^\infty$, $\pi_i(db|h_{i-1})$ is the (regular) conditional distribution of $B_i=(\Theta_{i},A_i)$ given the $(i-1)$-history $H_{i-1}=h_{i-1}$. This is in line with the following construction. 
	For a given initial distribution $\nu$ on $\textbf{X}_\Delta$ and a strategy $\pi$, by the Ionescu-Tulcea Theorem, see e.g., \cite[Prop.7.28]{Bertsekas:1978}, there is a unique probability measure $\PP^\pi_{\nu}$ on $(\Omega,{\cal F})$ called {\sl strategic} and satisfying the following conditions:
	\begin{eqnarray*}
		\PP^\pi_{\nu}(X_0\in\Gamma_X)=\nu(\Gamma_X)~\forall~ \Gamma_X\in{\cal B}({\bf X}_\Delta)
	\end{eqnarray*}
	and for all $i=1,2,\dots$, $\Gamma_B\in{\cal B}({\bf B})$, $\Gamma_X\in{\cal B}({\bf X}_\Delta)$, $\PP^\pi_{\nu}$-almost surely
	\begin{eqnarray}
		\PP^\pi_{\nu}(B_i\in\Gamma_B|H_{i-1})&=&\pi_i(\Gamma_B|H_{i-1});\label{e33}\\
		\PP^\pi_{\nu}(X_i\in\Gamma_X|H_{i-1},B_i=(\Theta_i,A_i))
		&=& Q(\Gamma_X| X_{i-1},B_i)  \nonumber
	\end{eqnarray}
	$$=\left\{\begin{array}{ll} e^{-\alpha\Theta_i}
		\delta_{l(\phi(X_{i-1},\Theta_i),A_i)}(\Gamma_X)\\
		+(1-e^{-\alpha\Theta_i})\delta_\Delta(\Gamma_X), & \mbox{ if } X_{i-1}\ne\Delta,~\Theta_i\ne\infty; \\
		\delta_\Delta(\Gamma_X) & \mbox{ otherwise. } \end{array} \right.$$
	When the initial distribution $\nu$ is a Dirac measure concentrated on a singleton, say $\{x_0\}$, we write $\PP^\pi_\nu$ as $\PP^\pi_{x_0}$. The mathematical expectation with respect to $\PP^\pi_\nu$ and $\PP^\pi_{x_0}$ is denoted as $\EE^\pi_\nu$ and $\EE^\pi_{x_0}$, respectively. Note that $\PP^\pi_\nu(X_i\in{\bf X})=\EE^\pi_\nu[e^{-\alpha T_i}]$, $i=0,1,\ldots$.
	
	Now the objectives (performance functionals) are
	$${\cal V}_j(\nu,\pi):=\EE^\pi_\nu\left[\sum_{i=1}^\infty\bar C_j(X_{i-1},(\Theta_i,A_i))\right], ~j=0,1,\ldots,J.$$
	Again, when $\nu=\delta_{x_0}$, we write ${\cal V}_j(x_0, \pi)$ for ${\cal V}_j(\nu, \pi)$. Note that we do not exclude the possibility of $\sum_{i=1}^{\infty}\Theta_i<\infty$, but, as already  mentioned above, we will only consider the total cost accumulated over $[0,\sum_{i=1}^\infty \Theta_i).$ This is consistent with the definition of ${\cal V}_j(\nu, \pi)$.
	
	The constrained optimal control problem under study is the following one:
	\begin{eqnarray}\label{PZZeqn02}
		\mbox{Minimize with respect to } \pi &&{\cal V}_0(x_0, \pi)  \\
		\mbox{subject to }&& {\cal V}_j(x_0,\pi)\le d_j,~j=1,2,\dots,J.\nonumber
	\end{eqnarray}
	Here and below, we take $x_0\in\textbf{X}$ as a fixed initial point, and $d_j\ge 0$, $j=1,2,\ldots, J$ as fixed constraint constants.

	\begin{definition}\label{d2}
		A strategy $\pi$ is called {\sl feasible} if it satisfies all the constraint inequalities in problem (\ref{PZZeqn02}). A feasible strategy $\pi^\ast$ is called {\sl optimal} if ${\cal V}_0(x_0,\pi^*)\le  {\cal V}_0(x_0,\pi)$ for all feasible strategies $\pi$. The infimum of ${\cal V}_0(x_0,\pi)$ over all feasible strategies is called the {\sl  optimal value} of problem (\ref{PZZeqn02}), denoted as val(\ref{PZZeqn02}).
	\end{definition}
	
	\begin{condition}\label{con1}
		The cost functions $C^g_j(\cdot)$ and $C^I_j(\cdot,\cdot)$, $j=0,1,\ldots,J$ are non-negative.
	\end{condition}
	
	\begin{condition}\label{ConstrainedPPZcondition05}
		There exists some feasible strategy $\pi$ such that ${\cal V}_0(x_0,\pi)<\infty.$ We say that $\pi$ is a feasible strategy with finite value.
	\end{condition}
	
	If Condition \ref{ConstrainedPPZcondition05} is not satisfied, then either problem (\ref{PZZeqn02}) is not solvable or any feasible strategy is optimal. 
	Conditions \ref{con1} and  \ref{ConstrainedPPZcondition05} imply that $d_j\ge 0$ for all $j=1,2,\ldots,J$.
	
	\begin{condition}\label{ConstrainedPPZcondition01}
		\begin{itemize}
			\item[(a)] The space $\textbf{A}$ is compact, and $+\infty$ is the one-point compactification of $[0,\infty)$.
			\item[(b)] The function $(x,a)\in \textbf{X}\times \textbf{A}\to l(x,a)$ is continuous.
			\item[(c)] The function $(x,\theta)\in \textbf{X}\times [0,\infty)\to \phi(x,\theta)$ is continuous.
			\item[(d)] For each $j=0,1,\dots,J,$ the function $(x,a)\in \textbf{X}\times \textbf{A}\to C_j^I(x, a)$ is lower semicontinuous.
			\item[(e)] For each $j=0,1,\dots,J,$ the function $x\in \textbf{X}\to C_j^g(x)$ is lower semicontinuous.
		\end{itemize}
	\end{condition}
	
	\begin{proposition}\label{pr1}
		Under Conditions \ref{con1} and \ref{ConstrainedPPZcondition01}, the MDP is {\sl positive semicontinous} in the sense that the action space $\bf B$ is compact, $Q$ is a continuous stochastic kernel, and the cost functions $\bar C_j(\cdot,\cdot)$ are non-negative lower semicontinuous for all $j=0,1,\ldots, J$.
	\end{proposition}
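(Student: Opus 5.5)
Three properties have to be checked. The state space is ${\bf X}_\Delta$, with $\Delta$ an isolated point, and $\bf B=[0,\infty]\times{\bf A}$ carries the product topology, with $[0,\infty]$ being the one-point compactification of $[0,\infty)$.

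\emph{Compactness of $\bf B$.} This is immediate. $[0,\infty]$ is homeomorphic to $[0,1]$ (for instance via $\theta\mapsto 1-e^{-\theta}$), $\bf A$ is compact by Condition \ref{ConstrainedPPZcondition01}(a), and Tychonoff's theorem applies to the finite product.

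\emph{Continuity of $Q$.} We must show that $(x,\theta,a)\mapsto\int g(y)\,Q(dy|x,(\theta,a))$ is continuous for every bounded continuous $g$ on ${\bf X}_\Delta$. Because $\Delta$ is isolated, it suffices to consider sequences $(x_n,\theta_n,a_n)\to(x,\theta,a)$ with $x_n,x\in{\bf X}$.

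If $\theta<\infty$, the integral equals $e^{-\alpha\theta_n}g(l(\phi(x_n,\theta_n),a_n))+(1-e^{-\alpha\theta_n})g(\Delta)$ for large $n$. This converges by the joint continuity of $\phi$ in Condition \ref{ConstrainedPPZcondition01}(c), the continuity of $l$ in (b), and the continuity of $g$.

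If $\theta=\infty$, the limit value is $g(\Delta)$. Along the subsequence with $\theta_n<\infty$, the integral differs from $g(\Delta)$ by at most $2\|g\|e^{-\alpha\theta_n}\to0$. Along the subsequence with $\theta_n=\infty$, it equals $g(\Delta)$ exactly. So the integral converges to $g(\Delta)$ in this case too.

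\emph{Lower semicontinuity and non-negativity of $\bar C_j$.} Non-negativity follows directly from Condition \ref{con1}. For lower semicontinuity, on $\{\Delta\}\times\bf B$ the cost is $0$ and $\Delta$ is isolated, so only points $x\in{\bf X}$ matter. There we write $\bar C_j=F_1+F_2$ and show each term is l.s.c.

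The first term is $F_1(x,\theta)=\int_0^\infty \II\{t<\theta\}e^{-\alpha t}C^g_j(\phi(x,t))\,dt$. Its integrand is l.s.c. in $(x,\theta)$ for each fixed $t$: the map $(x,\theta)\mapsto \II\{t<\theta\}$ is the indicator of an open set, hence l.s.c.; $x\mapsto C^g_j(\phi(x,t))$ is l.s.c. as an l.s.c. function composed with a continuous one; and a product of non-negative l.s.c. functions is l.s.c. Fatou's lemma then gives $\liminf F_1(x_n,\theta_n)\ge F_1(x,\theta)$, including the case $\theta=\infty$.

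The second term is $F_2(x,\theta,a)=e^{-\alpha\theta}C^I_j(\phi(x,\theta),a)$, which is $0$ at $\theta=\infty$ by convention. At $\theta=\infty$ it is trivially l.s.c., being non-negative with value $0$. For $\theta<\infty$, $\theta_n$ is eventually finite and converges to $\theta$. Then $e^{-\alpha\theta_n}\to e^{-\alpha\theta}>0$, and $C^I_j(\phi(x,\theta),a)\le\liminf C^I_j(\phi(x_n,\theta_n),a_n)$ by Condition \ref{ConstrainedPPZcondition01}(c),(d). Multiplying a positive convergent sequence by a non-negative one with this liminf bound preserves the inequality, including when the limit value is $+\infty$.

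The only delicate point is the behaviour at $\theta=\infty$. It is handled by the convention $e^{-\alpha\infty}=0$ and by arranging every estimate so that it holds uniformly whether $\theta_n$ is finite or infinite.
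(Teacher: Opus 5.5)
Your proof is correct and follows the same overall plan as the paper. Compactness of ${\bf B}$ comes from a product of compacts, continuity of $Q$ is read off from the explicit formula for $\int u(y)\,Q(dy|x,b)$, and lower semicontinuity of $\bar C_j$ is proved by treating the gradual integral and the discounted lump-sum term separately, with $\theta=\infty$ absorbed by $e^{-\alpha\infty}=0$.

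The one genuine difference is in the integral term. The paper approximates $C^g_j$ from below by an increasing sequence of bounded continuous functions $C^{g,m}_j$, which is a standard lemma for non-negative lower semicontinuous functions. It then obtains lower semicontinuity of $(x,\theta)\mapsto\int_0^\theta e^{-\alpha t}C^g_j(\phi(x,t))\,dt$ as a monotone limit. This tacitly requires each approximating integral to be continuous on ${\bf X}\times[0,\infty]$. That holds by dominated convergence, since the integrands are bounded and $e^{-\alpha t}$ is integrable, but the paper does not spell it out.

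Your route applies Fatou's lemma directly to the pointwise lower semicontinuous integrand $\II\{t<\theta\}e^{-\alpha t}C^g_j(\phi(x,t))$. No approximation step is needed, and the case $\theta=\infty$ is covered automatically because $(t,\infty]$ is open in $[0,\infty]$. Similarly, your bound $2\|g\|e^{-\alpha\theta_n}$ makes explicit the continuity of $Q$ at $\theta=\infty$, which the paper only asserts. Your argument is the more self-contained of the two; the paper's is shorter but leans on the approximation lemma and an unstated continuity check.
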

	
	\begin{lemma}\label{l1}
		If in a positive semicontinuous MDP there is a feasible strategy $\pi$ with a finite value, then there exists a stationary strategy which is feasible and optimal in problem  (\ref{PZZeqn02}).
	\end{lemma}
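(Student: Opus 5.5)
The plan is to use the convex-analytic (occupation measure) approach for positive semicontinuous MDPs with total costs. For every strategy $\pi$ I would introduce the occupation measure on ${\bf X}_\Delta\times{\bf B}$,
$$\eta^\pi(\Gamma):=\sum_{i=1}^\infty \PP^\pi_{x_0}\big((X_{i-1},B_i)\in\Gamma\big),$$
so that ${\cal V}_j(x_0,\pi)=\int \bar C_j\,d\eta^\pi$ for every $j$, by monotone convergence and non-negativity of $\bar C_j$. The key structural fact is the standard sufficiency result: for every $\pi$ there is a stationary $\pi^s$ with $\eta^{\pi^s}\le\eta^\pi$ on ${\bf X}\times{\bf B}$. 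Since $\bar C_j\equiv 0$ at $\Delta$, this gives ${\cal V}_j(x_0,\pi^s)\le{\cal V}_j(x_0,\pi)$ for all $j$. To build $\pi^s$, disintegrate $\eta^\pi$ restricted to ${\bf X}\times{\bf B}$ as $\pi^s(db|x)\,\eta^\pi(dx\times{\bf B})$, and compare the two measures through the characteristic equation
$$\eta(dy\times{\bf B})=\delta_{x_0}(dy)+\int Q(dy|x,b)\,\eta(dx,db)\quad\mbox{on } {\bf X},$$
which shows that $\eta^{\pi^s}$ is the minimal solution. Hence it suffices to minimize $\int \bar C_0\,d\eta$ over the set ${\cal D}$ of occupation measures of strategies satisfying $\int\bar C_j\,d\eta\le d_j$.

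Next comes existence of a minimizer. Let $m:=$ val(\ref{PZZeqn02})$<\infty$, and take feasible $\pi_n$ with ${\cal V}_0(x_0,\pi_n)\downarrow m$. The occupation measures need not be finite, because the costs may be small while the process stays in ${\bf X}$. I would therefore work with the strategic measures $\PP^{\pi_n}_{x_0}$ on $\Omega$ instead. Since ${\bf B}$ is compact by Proposition \ref{pr1} and $Q$ is weakly continuous, the set of strategic measures $\{\PP^\pi_{x_0}\}$ is relatively compact in the weak topology on $\Omega$ (Schäl's theorem / Balder). It is also closed, because continuity of $Q$ passes the conditional transition identity to limits. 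So a subsequence converges weakly to $\PP^{\pi^*}_{x_0}$ for some strategy $\pi^*$.

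The functional $\PP\mapsto\EE[\sum_i\bar C_j(X_{i-1},B_i)]$ is a supremum over $N$ of integrals of lower semicontinuous non-negative functions. It is therefore weakly lower semicontinuous, which yields ${\cal V}_j(x_0,\pi^*)\le\liminf_n{\cal V}_j(x_0,\pi_n)\le d_j$ for $j\ge1$ and ${\cal V}_0(x_0,\pi^*)\le m$. Thus $\pi^*$ is feasible and optimal, and applying the first step to $\pi^*$ gives a stationary strategy that is also feasible and optimal.

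I expect the main obstacle to be the compactness of strategic measures. ${\bf X}$ is not compact, so tightness of the state marginals must come from the dynamics. From $x_0$ the states $X_i$ are continuous images of $(B_1,\dots,B_i)$, which range over the compact ${\bf B}^i$, plus the point $\Delta$. Hence each $X_i$ takes values in a compact set, and tightness holds coordinatewise. Along the way I need to check that $\Delta$ being isolated and $\theta=\infty$ keep the maps continuous, which is exactly what Proposition \ref{pr1} provides via continuity of $Q$. If that route is awkward, the fallback is to cite the known result for positive semicontinuous constrained MDPs (e.g., Dufour–Piunovskiy / Piunovskiy–Zhang) directly.
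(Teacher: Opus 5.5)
Your route differs from the paper's, which simply invokes Theorem 4.1 of \cite{Dufour:2012}, a convex-analytic existence result. You first get an optimal, possibly non-stationary $\pi^*$ from compactness of strategic measures and lower semicontinuity, and then reduce to a stationary strategy via occupation measures. The existence half can be made to work, but the reduction half has a genuine gap as written.

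You disintegrate $\eta^{\pi^*}$ on all of ${\bf X}\times{\bf B}$. That requires the marginal $\eta^{\pi^*}(dx\times{\bf B})$ to be $\sigma$-finite, and finiteness of all ${\cal V}_j(x_0,\pi^*)$ does not give this, because in zero-cost states a strategy may apply impulses with $\theta=0$ infinitely often. For example, take $l(x,a)\equiv x$, $C^I_j\equiv 0$ and $a\ne a'$. The strategy ``apply $(0,a')$ once, then $(0,a)$ forever'' has zero costs and $\eta^\pi=\delta_{(x_0,(0,a'))}+\infty\cdot\delta_{(x_0,(0,a))}$. No kernel $\varphi$ satisfies $\varphi(db|x)\eta^\pi(dx\times{\bf B})=\eta^\pi(dx\times db)$, since $\varphi(\{(0,a')\}|x_0)\cdot\infty\in\{0,\infty\}$ while the left side must equal $1$. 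So the ``standard sufficiency result for every $\pi$'' is not available in the form you use.

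The missing idea is the one behind the cited theorem and Section \ref{sec3}, in three steps:
\begin{itemize}
\item First replace $\pi^*$ by the strategy that switches to $f^*$ upon entering ${\bf V}^c$. By (\ref{e5}) this does not increase any ${\cal V}_j$.
\item Next use that a measure satisfying (\ref{enum1}) with finite total cost has a $\sigma$-finite marginal on ${\bf V}$ (Lemma \ref{l2}, a nontrivial result).
\item Then disintegrate only on ${\bf V}$, and run your minimal-solution comparison there for the induced strategy of Definition \ref{d4}; this is Lemma \ref{l3}.
\end{itemize}

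Your tightness argument is also incorrect. The surviving state after one step is $l(\phi(x_0,\theta),a)$ with $\theta$ ranging over the non-compact set $[0,\infty)$. This map does not extend continuously to $\theta=\infty$, where the state becomes $\Delta$. For instance, with ${\bf X}=\RR$ and $\phi(x,t)=x+t$, $X_1$ is unbounded with positive probability, so $X_i$ need not lie in a compact set. Continuity of $Q$ at $\theta=\infty$ comes from the weight $e^{-\alpha\theta}\to 0$, not from continuity of the state map.

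Tightness does hold, but because of discounting. Put $K^{(0)}_T:=\{x_0\}$ and $K^{(k)}_T:=\{l(\phi(y,\theta),a):~y\in K^{(k-1)}_T,~\theta\in[0,T],~a\in{\bf A}\}$, which is compact by Condition \ref{ConstrainedPPZcondition01}(b,c). On $\{X_i\in{\bf X}\setminus K^{(i)}_T\}$ some $k\le i$ has $X_{k-1},X_k\in{\bf X}$ and $\Theta_k>T$, and the survival probability at that step is $e^{-\alpha\Theta_k}\le e^{-\alpha T}$. Hence $\PP^\pi_{x_0}(X_i\in{\bf X}\setminus K^{(i)}_T)\le i\,e^{-\alpha T}$ uniformly in $\pi$.

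With this fix, your closedness and lower-semicontinuity steps go through. Your existence argument then becomes a legitimate alternative to the convex-analytic existence proof, and it avoids compactness arguments in spaces of possibly infinite measures. However, the passage to a stationary strategy still needs the ${\bf V}$/$\sigma$-finiteness machinery, which is the real content of the cited theorem.
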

	
	For the proof, see Theorem 4.1 of \cite{Dufour:2012}.
	
	In what follows, we assume that the MDP under investigation is positive semicontinuous.
	
	Let
	\begin{eqnarray}\label{e5p}
		{\bf V}:=\left\{x\in\textbf{X}_\Delta: \inf_{\pi}\EE_x^\pi\left[ \sum_{i=1}^\infty \sum_{j=0}^J\bar{C}_j(X_{i-1},B_{i}) \right]>0\right\},
	\end{eqnarray}
	Note that $\Delta\in {\bf V}^c:=\textbf{X}_\Delta\setminus {\bf V}$ because $\Delta$ is a costless cemetery in this MDP.
	
	When applying Corollary 9.17.2 of \cite{Bertsekas:1978} to the MDP under study with the cost function $\sum_{j=0}^J \bar C_j(x,b)$, we obtain that the Bellman function
	\begin{equation}\label{e8}
		V(x):=\inf_\pi \EE^\pi_x\left[\sum_{i=1}^\infty \sum_{j=0}^J \bar C_j(X_{i-1},B_i)\right],~~~x\in{\bf X}_\Delta
	\end{equation}
	is a non-negative lower semicontinuous function if the MDP model  is positive semicontinuous. In particular, the set ${\bf V}^c$ is a closed subset of ${\bf X}_\Delta$ and $\bf V$ is an open subset of $\bf X$. Moreover, there is a deterministic stationary strategy $f^*$ providing the infimum in (\ref{e8}) and in the Bellman equation
	$$V(x)=\inf_{b\in{\bf B}}\left\{\sum_{j=0}^J \bar C_j(x,b)+\int_{{\bf X}_\Delta}V(y)Q(dy|x,b)\right\},~~~~~x\in{\bf X}_\Delta,$$
	and 
	\begin{equation}\label{e5}
		\bar C_j(x,f^*(x))=Q({\bf V}|x,f^*(x))=0~~~~~\forall x\in{\bf V}^c,~~\forall j=0,1,\ldots, J.
	\end{equation}
	For the last statement, it is sufficient to notice that for all $x\in{\bf V}^c$
	$$0=V(x)=\sum_{j=0}^J\bar C_j(x,f^*(x))+\int_{\bf V} V(y)Q(dy|x,f^*(x))$$
	and $V(y)>0$ for $y\in{\bf V}$.
	
	Now it is clear that, if $x_0\in{\bf V}^c$, then $f^*$ is the solution to problem  (\ref{PZZeqn02}):
	$${\cal V}_j(x_0,f^*)=0~~~~~\forall x_0\in{\bf V}^c,~~j=0,1,\ldots, J.$$
	It is also clear that, at all decision epochs $i=1,2,\ldots$, it is optimal to apply actions $f^*(X_{i-1})$ as soon as $X_{i-1}\in{\bf V}^c$: the expected future costs associated with $\bar C_j(\cdot,\cdot)$, $j=0,1,\ldots,J$, are zero.
	
	\begin{definition}\label{d3}
		Assuming that the MDP model is positive and semicontinuous and there exists a  feasible strategy with finite value,
		$\Pi$ is the class of feasible strategies with finite value and satisfying the condition 
		\begin{equation}\label{e0} 
			\pi_i(db|h_{i-1}):=\delta_{f^*(x_{i-1})}(db),~~~~~\forall i=1,2,\ldots,
		\end{equation}
		if $x_{i-1}$, the last component of $h_{i-1}$, is in ${\bf V}^c$.
	\end{definition}
	
	Now it is possible to restrict to the strategies from $\Pi$ and  concentrate on the subset $\bf V$. In particular, we assume that $x_0\in{\bf V}$. Note also that, without loss of generality, one can say that the feasible and optimal in problem (\ref{PZZeqn02}) strategy $\pi$, existing by Lemma \ref{l1}, is from the class $\Pi$.
	
	\begin{definition}\label{15JuneDef02}
		Let the initial state $x_0\in{\bf X}$  be fixed.
		For each strategy $\pi$, its {\sl occupation measure} $\mu^\pi$  is defined by
		$$\mu^\pi(\Gamma_1\times\Gamma_2):=\EE_{x_0}^\pi\left[\sum_{i=0}^\infty \II\{(X_i,B_{i+1})\in\Gamma_1\times\Gamma_2\}\right]~\forall~\Gamma_1\in{\cal B}(\textbf{X}_\Delta),\Gamma_2\in{\cal B}(\textbf{B}).$$
	\end{definition}
	
	\begin{lemma}\label{l5} For a fixed initial state $x_0\in{\bf X}$,
		the set of all occupation measures is convex.
	\end{lemma}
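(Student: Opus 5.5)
Given two strategies $\pi^1,\pi^2$ and $\lambda\in[0,1]$, we need a strategy $\pi$ with $\mu^\pi=\lambda\mu^{\pi^1}+(1-\lambda)\mu^{\pi^2}$. The most direct route is randomization at time zero. Since Definition \ref{d1} only allows kernels $\pi_i(db|h_{i-1})$, we cannot add an extra coin flip to the history. Instead we would use the standard device that the occupation measure depends only on the strategic measure $\PP^\pi_{x_0}$. We then show that the set of strategic measures $\{\PP^\pi_{x_0}\}$ is convex; this is the classical result for MDPs on Borel spaces with arbitrary (history-dependent, randomized) strategies, see \cite{Bertsekas:1978} or Feinberg's results on convexity of the set of strategic measures. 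Once $\PP=\lambda\PP^{\pi^1}_{x_0}+(1-\lambda)\PP^{\pi^2}_{x_0}$ is known to equal $\PP^\pi_{x_0}$ for some $\pi$, the identity $\mu^\pi=\lambda\mu^{\pi^1}+(1-\lambda)\mu^{\pi^2}$ follows at once. Indeed, $\mu^\pi(\Gamma_1\times\Gamma_2)$ is an expectation of a nonnegative functional of $\omega$, which is linear in the measure, and monotone convergence handles the infinite sum.

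To build $\pi$ explicitly, we would define the mixture measure $\PP$ on $(\Omega,{\cal F})$. Let $P^k_{i-1}$ be the marginal of $\PP^{\pi^k}_{x_0}$ on ${\bf H}_{i-1}$. Let $\rho^k_{i-1}$ be the Radon--Nikodym derivatives of $\lambda P^1_{i-1}$ and $(1-\lambda)P^2_{i-1}$ with respect to $P_{i-1}:=\lambda P^1_{i-1}+(1-\lambda)P^2_{i-1}$; these sum to one $P_{i-1}$-a.s. We then set
$$\pi_i(db|h_{i-1}):=\rho^1_{i-1}(h_{i-1})\pi^1_i(db|h_{i-1})+\rho^2_{i-1}(h_{i-1})\pi^2_i(db|h_{i-1}),$$
with measurable versions of the densities chosen so that $\pi_i$ is a stochastic kernel. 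Next we verify by induction on $i$ that the marginal of $\PP^\pi_{x_0}$ on ${\bf H}_i$ equals $P_i$. For $i=0$ both are $\delta_{x_0}$. For the inductive step, the joint law of $(H_{i-1},B_i)$ under $\PP^\pi_{x_0}$ is $P_{i-1}(dh)\pi_i(db|h)$. By construction this equals $\lambda P^1_{i-1}(dh)\pi^1_i(db|h)+(1-\lambda)P^2_{i-1}(dh)\pi^2_i(db|h)$. The transition $Q$ is the same for both strategies, so the same mixture identity holds on ${\bf H}_i$. By Ionescu-Tulcea uniqueness (Kolmogorov consistency), $\PP^\pi_{x_0}=\PP$.

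The main obstacle is the measure-theoretic step: the existence of jointly measurable densities $\rho^k_{i-1}$ making $\pi_i$ a genuine stochastic kernel on ${\bf B}$ given ${\bf H}_{i-1}$. We would handle this by taking any Borel versions of the densities, truncating them to $[0,1]$, and redefining $\rho^1:=1,\rho^2:=0$ on the null set where they do not sum to one. Null sets do not affect the strategic measure. Everything else is routine linearity. We would not need any of the specific impulse-control structure. The set $\Pi$ plays no role here, since the lemma concerns all strategies.
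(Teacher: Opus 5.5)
Your proof is correct. The paper gives no proof of its own and instead cites Propositions 1 and 3 of \cite{sicon24}; your argument is essentially the standard one behind that citation: convexity of the set of strategic measures $\PP^\pi_{x_0}$, realized by your density-weighted mixture of the kernels $\pi^1_i,\pi^2_i$, followed by affinity of $\PP^\pi_{x_0}\mapsto\mu^\pi$. The measurability issue you flag is benign, since $\rho^k_{i-1}$ depends on $h_{i-1}$ only.

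One small refinement: apply the linearity to $\mu^\pi(\Gamma)=\EE^\pi_{x_0}\bigl[\sum_{i\ge 0}\II\{(X_i,B_{i+1})\in\Gamma\}\bigr]$ for every measurable $\Gamma\subseteq{\bf X}_\Delta\times{\bf B}$, not only for rectangles. Occupation measures here are typically not $\sigma$-finite (e.g.\ on $\{\Delta\}\times{\bf B}$), so you cannot rely on the usual uniqueness-of-extension argument from rectangles.
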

	For the proof see  Propositions 1 and 3 of \cite{sicon24}.
	
	By the monotone convergence theorem, for $\bar C_j(\cdot,\cdot)\ge 0$
	$${\cal V}_j(x_0,\pi)=\int_{{\bf X}_\Delta\times{\bf B}} \bar C_j(x,b)\mu^\pi(dx\times db), ~j=0,1,\ldots, J.$$
	As is well known (see Lemma 9.4.3 of \cite{HernandezLerma:1999}), each occupation measure satisfies equation
	\begin{eqnarray}\label{enum1}
		\mu^\pi(\Gamma\times \textbf{B})=\delta_{x_0}(\Gamma)+\int_{\textbf{X}_\Delta\times \textbf{B}}Q(\Gamma|y,b)\mu^\pi(dy\times db),~~~\Gamma\in{\cal B}(\textbf{X}_\Delta).
	\end{eqnarray}
	
	\begin{lemma}\label{l2}
		Suppose  the MDP model is positive and semicontinuous. Then each measure $\mu$ on ${\bf X}_\Delta\times{\bf B}$, satisfying equation (\ref{enum1}) and such that 
		$$\displaystyle \int_{{\bf X}_\Delta\times{\bf B}}\sum_{j=0}^J \bar C_j(x,b)\mu(dx\times db)<\infty,$$ 
		exhibits the following property:
		\begin{equation}\label{enum2}
			\mbox{the restriction on $\bf V$ of the marginal measure $\mu(dx\times{\bf B})$ is  $\sigma$-finite.}
		\end{equation}
	\end{lemma}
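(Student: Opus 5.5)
Write $C(x,b):=\sum_{j=0}^J\bar C_j(x,b)$. The idea is to cover $\bf V$ by countably many sets $\{x: V(x)>1/n\}$, $n\ge 1$, where $V$ is the Bellman function (\ref{e8}), and to show that each of these sets has finite $\mu(dx\times{\bf B})$-measure. Since $V(x)>0$ exactly when $x\in{\bf V}$, this gives (\ref{enum2}). The key inequality I will aim for is
\begin{equation*}
\int_{{\bf X}_\Delta} V(x)\,\mu(dx\times{\bf B})\le \int_{{\bf X}_\Delta\times{\bf B}} C(x,b)\,\mu(dx\times db)<\infty .
\end{equation*}
From it, Chebyshev's inequality gives $\mu(\{V>1/n\}\times{\bf B})\le n\int C\,d\mu<\infty$.

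To obtain the inequality I will use the Bellman equation stated above. It gives, for every $(x,b)$,
\begin{equation*}
V(x)\le C(x,b)+\int_{{\bf X}_\Delta} V(y)Q(dy|x,b).
\end{equation*}
Integrating this with respect to $\mu$ and using (\ref{enum1}) for the last term, I get
\begin{equation*}
\int V\,d\mu\le \int C\,d\mu+\int V\,d\mu-V(x_0).
\end{equation*}
This is useless if $\int V\,d\mu=\infty$, so I will not subtract; instead I will iterate. Decompose $\mu$ by iterating (\ref{enum1}): let $\nu_0=\delta_{x_0}$ and $\nu_{k+1}(\Gamma)=\int Q(\Gamma|y,b)\,\mu_k(dy\times db)$, where $\mu_k$ is a disintegration of $\mu$ restricted to the $k$-th layer. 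More cleanly, let $\pi^s$ be the stationary kernel obtained by disintegrating $\mu(dx\times db)=\pi^s(db|x)\mu(dx\times{\bf B})$. The standard argument (Lemma 9.4.3-type results in \cite{HernandezLerma:1999}) shows that $\mu\ge\mu^{\pi^s}$, and that $\mu-\mu^{\pi^s}$ satisfies the homogeneous equation. This step needs $\sigma$-finiteness, though, which is exactly what we want to prove, so I will avoid disintegration of possibly non-$\sigma$-finite measures.

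Alternative route, which I will actually follow. Define the truncation $V_N(x):=\inf_\pi\EE^\pi_x[\sum_{i=1}^N C(X_{i-1},B_i)]$, the $N$-step value. Then $V_N\uparrow V$ by the standard result for positive semicontinuous models (Bertsekas--Shreve, value iteration converges). Moreover $V_0=0$ and $V_{N+1}(x)\le C(x,b)+\int V_N(y)Q(dy|x,b)$. Integrating against $\mu$ and using (\ref{enum1}) yields
\begin{equation*}
\int V_{N+1}\,d\mu\le \int C\,d\mu+\int V_N\,d\mu-V_N(x_0).
\end{equation*}
If $\int V_N\,d\mu<\infty$, this gives $\int V_{N+1}\,d\mu<\infty$ without any subtraction issue. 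By induction every $\int V_N\,d\mu$ is finite, but the bound grows like $N\int C\,d\mu$. The main obstacle is to get a bound uniform in $N$.

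For the uniform bound I will change the order: apply the inequality through the kernel composition, writing $\int V_{N}\,d\mu$ as the sum over $k$ of the $k$-th iterate contributions. By a telescoping/induction on the decomposition $\mu=\sum_k m_k+\mu_\infty$ of (\ref{enum1}) (minimal solution plus a harmonic part), the minimal part gives the bound $\int C\,d\mu$ directly, as for an occupation measure. The harmonic part $\mu_\infty$ satisfies $\mu_\infty(\Gamma\times{\bf B})=\int Q(\Gamma|y,b)\mu_\infty$; since $Q$ sends mass to $\Delta$ with rate $1-e^{-\alpha\theta}$, I expect to show that $\mu_\infty$ lives on $\{\theta=0\}$ and that $\int C\,d\mu_\infty<\infty$. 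Controlling this harmonic part on $\bf V$ is where I expect the real difficulty. If this fails, I will fall back on covering ${\bf V}$ by the sets $\{x:V_N(x)>1/n\}$, which suffices since $V_N\uparrow V$ and each of these sets has finite measure by the finite-$N$ bound.
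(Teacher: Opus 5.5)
Drop the route you announce as your main one, because its target is false. The inequality $\int V(x)\,\mu(dx\times\mathbf{B})\le\int C\,d\mu$ fails in general, even for occupation measures. So the claim that the ``minimal part gives the bound $\int C\,d\mu$ directly, as for an occupation measure'' is wrong too, and no bound on $\int V_N(x)\,\mu(dx\times\mathbf{B})$ uniform in $N$ can exist.

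Here is a counterexample. Take $J=0$, $\mathbf{X}=\{1,2,\dots\}$, any compact $\mathbf{A}$, $\phi(x,t)=x$, $l(x,a)=x+1$, $C^g_0\equiv 1$, $C^I_0(x,a)=x^{-2}$, $x_0=1$, and the strategy with all $\theta_i=0$. Its occupation measure charges each $k\in\mathbf{X}$ exactly once, so $\int C\,d\mu=\sum_k k^{-2}<\infty$. From state $k$, however, any control must do one of two things:
\begin{itemize}
\item apply infinitely many impulses in $[0,1]$, paying at least $e^{-\alpha}\sum_{i\ge k}i^{-2}\ge e^{-\alpha}/k$; or
\item pay the gradual cost $(1-e^{-\alpha})/\alpha$ on $[0,1]$.
\end{itemize}
Hence $V(k)\ge\min\{(1-e^{-\alpha})/\alpha,\,e^{-\alpha}/k\}$, and $\int V(x)\,\mu(dx\times\mathbf{B})=\infty$. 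The underlying reason is that $V(X_k)$ already bounds the whole remaining cost, so $\int V\,d\mu^\pi$ counts the cost of step $k$ roughly $k+1$ times. The minimal/harmonic decomposition therefore cannot deliver a uniform bound.

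Uniformity is not needed, though, and your fallback is already a complete proof.
\begin{itemize}
\item It is cleanest to set $V_N:=T^N0$, where $Tu(x):=\inf_{b\in\mathbf{B}}\{C(x,b)+\int u(y)Q(dy|x,b)\}$. In the semicontinuous model each $V_N$ is lower semicontinuous, hence Borel. Also $V_{N+1}(x)\le C(x,b)+\int V_N(y)Q(dy|x,b)$ for all $(x,b)$.
\item $V_N\uparrow V$ by Proposition 9.17 of Bertsekas--Shreve, the result behind Corollary 9.17.2 used in the paper. Compactness of $\mathbf{B}$ is what makes the limit equal to $V$.
\item Use (\ref{enum1}) in the form $\int u(x)\,\mu(dx\times\mathbf{B})=u(x_0)+\int\int u(y)Q(dy|x,b)\,\mu(dx\times db)$ for Borel $u\ge 0$. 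This needs only monotone convergence, not $\sigma$-finiteness. It gives $\int V_{N+1}(x)\,\mu(dx\times\mathbf{B})\le\int C\,d\mu+\int V_N(x)\,\mu(dx\times\mathbf{B})$ in $[0,\infty]$.
\item Hence $\int V_N(x)\,\mu(dx\times\mathbf{B})\le N\int C\,d\mu$, with no subtraction of $V_N(x_0)$ needed.
\item Since $\mathbf{V}=\{V>0\}=\bigcup_{N,n\ge 1}\{V_N>1/n\}$ and $\mu(\{V_N>1/n\}\times\mathbf{B})\le nN\int C\,d\mu<\infty$, property (\ref{enum2}) follows.
\end{itemize}

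The paper gives no argument of its own here; it only cites Theorem 3.2 of Dufour et al. Your fallback is a short self-contained proof whose only nontrivial input is convergence of value iteration, which the paper relies on anyway. Present it as the proof and discard the rest.
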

	
	For the proof, see Theorem 3.2 of \cite{Dufour:2012}.
	
	According to the explanations below Lemma \ref{l1}  (see (\ref{e5})), we always assume that condition (\ref{e0}) is satisfied. Now $\int_{{\bf V}^c\times{\bf B}}\bar C_j(x,b)\mu^\pi(dx\times db)\equiv 0$ for all $j=0,1,\ldots, J$, and one is interested only in the restriction of $\mu^\pi$ on ${\bf V}\times{\bf B}$. In what follows, with some abuse of notation, such restrictions of occupation measures (and of any measures $\mu$ on ${\bf X}_\Delta\times{\bf B}$) are also sometimes denoted as $\mu^\pi$ (and as $\mu$).
	
	\begin{corollary}\label{cor1}
		Under the conditions of Lemma \ref{l2}, if $\pi$ is a feasible strategy with finite value, then the occupation measure $\mu^\pi$ exhibits property (\ref{enum2}). In particular, for each strategy $\pi\in\Pi$, the measure $\mu^\pi$ exhibits property (\ref{enum2}).
	\end{corollary}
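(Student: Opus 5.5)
The plan is to reduce Corollary~\ref{cor1} to Lemma~\ref{l2}: I will check that the occupation measure $\mu^\pi$ of a feasible strategy with finite value satisfies both hypotheses of that lemma. The first hypothesis is equation (\ref{enum1}). It holds for every occupation measure by Lemma 9.4.3 of \cite{HernandezLerma:1999}, as recalled just before Lemma~\ref{l2}. The positive semicontinuity of the model is a standing assumption, given by Proposition~\ref{pr1}.

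The second hypothesis is the finiteness of $\int \sum_{j=0}^J \bar C_j\,d\mu^\pi$. Since all $\bar C_j\ge 0$, the monotone convergence identity ${\cal V}_j(x_0,\pi)=\int \bar C_j\,d\mu^\pi$ gives
$$\int_{{\bf X}_\Delta\times{\bf B}}\sum_{j=0}^J\bar C_j(x,b)\,\mu^\pi(dx\times db)=\sum_{j=0}^J{\cal V}_j(x_0,\pi).$$
Each term on the right is finite. Indeed, ${\cal V}_0(x_0,\pi)<\infty$ because $\pi$ has finite value. For $j=1,\ldots,J$, feasibility gives ${\cal V}_j(x_0,\pi)\le d_j<\infty$. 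Hence Lemma~\ref{l2} applies, and $\mu^\pi$ exhibits property (\ref{enum2}).

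For the second statement, note that by Definition~\ref{d3} every $\pi\in\Pi$ is a feasible strategy with finite value. The first part therefore applies directly. I expect no real obstacle here. The only point needing care is that equation (\ref{enum1}) holds for the full occupation measure on ${\bf X}_\Delta\times{\bf B}$, not for its restriction to ${\bf V}\times{\bf B}$. So Lemma~\ref{l2} must be applied to the unrestricted $\mu^\pi$, and only afterwards do we pass to the restriction on ${\bf V}$.
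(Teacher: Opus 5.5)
Your proposal is correct and is essentially the argument the paper has in mind when it calls the proof obvious. You check that $\mu^\pi$ satisfies equation (\ref{enum1}) and that $\int\sum_{j=0}^J\bar C_j\,d\mu^\pi=\sum_{j=0}^J{\cal V}_j(x_0,\pi)<\infty$ by finite value and feasibility, then apply Lemma~\ref{l2}; the case $\pi\in\Pi$ follows from Definition~\ref{d3}.
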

	
	The proof is obvious.
	
	If $\pi\in\Pi$, then for $\Gamma_1\subset {\bf V}^c$
	$$\mu^\pi(\Gamma_1\times\Gamma_2)=\EE_{x_0}^\pi\left[\sum_{i=0}^\infty \II\{(X_i \in\Gamma_1\}\II\{\Gamma_2\ni f^*(X_i)\}\right]~\forall~\Gamma_2\in{\cal B}(\textbf{B}).$$
	Therefore,
	$$\int_{{\bf V}^c\times{\bf B}} \bar C_j(x,b)\mu^\pi(dx\times db)=\EE^\pi_{x_0}\left[\sum_{i=0}^\infty \II\{X_i\in{\bf V}^c\} \bar C_j(X_i,f^*(X_i))\right]=0,~
	j=0,1,\ldots, J$$
	by (\ref{e5}), and 
	$${\cal V}_j(x_0,\pi)=\int_{{\bf V}\times{\bf B}} \bar C_j(x,b)\mu^\pi(dx\times db), ~j=0,1,\ldots, J.$$
	Furthermore, for $\pi\in\Pi$,  the measure $\mu^\pi$ on ${\bf V}\times{\bf B}$ satisfies equation
	$$\mu^\pi(\Gamma\times \textbf{B})=\delta_{x_0}(\Gamma)+\int_{\textbf{V}\times \textbf{B}}Q(\Gamma|y,b)\mu^\pi(dy\times db),~~\Gamma\in{\cal B}({\bf V})$$
	because for $\Gamma\subset {\bf V}$,  again by (\ref{e5}),
	$$\int_{\textbf{V}^c\times \textbf{B}}Q(\Gamma|y,b)\mu^\pi(dy\times db)=\EE^\pi_{x_0}\left[\sum_{i=0}^\infty \II\{X_i\in{\bf V}^c\} Q(\Gamma|X_i,f^*(X_i))\right]=0.$$
	
	\begin{definition}\label{d4}
		Suppose  the MDP model is positive and semicontinuous. We say that a stationary strategy $\pi^\mu$ is {\sl induced} by a  measure $\mu$ on ${\bf V}\times{\bf B}$ with property (\ref{enum2}),  if it has the following form
		$$\pi^\mu(db|x)=\II\{x\in{\bf V}\}\varphi_\mu(db|x)+\II\{x\in{\bf V}^c\}\delta_{f^*(x)}(db),$$
		where $\varphi_\mu$ is a stochastic kernel on $\bf B$ given $\bf V$ satisfying
		$$\varphi_\mu(db|x)\mu(dx\times{\bf B})=\mu(dx\times db)~~~\mbox{ on } {\cal B}({\bf V}\times{\bf B}).$$
	\end{definition} 
	The stochastic kernel $\varphi_\mu$ exists by Corollary 7.27.1 of \cite{Bertsekas:1978} which should be applied to the sets $\Gamma\times{\bf B}$ with $\mu(\Gamma\times{\bf B})<\infty$. There may be many induced strategies, different on  $\mu(dx\times{\bf B})$-zero subsets of $\bf V$.
	
	\begin{definition}\label{d7}
		$\cal D$ is the set of measures  $\mu$ on ${\bf V}\times{\bf B}$ with $\sigma$-finite marginals $\mu(dx\times {\bf B})$,
		satisfying equation 
		\begin{equation}\label{e9}
			\mu(\Gamma\times \textbf{B})=\delta_{x_0}(\Gamma)+\int_{{\bf V}\times \textbf{B}}Q(\Gamma|y,b)\mu(dy\times db),~\Gamma\in{\cal B}({\bf V})\
		\end{equation}
		and such that
		\begin{equation}\label{e13p}
			\int_{{\bf V}\times{\bf B}} \sum_{j=0}^J \bar C_j(x,b)\mu(dx\times db)<\infty.
		\end{equation}
	\end{definition}
	
	\begin{lemma}\label{l12}
		There exists a vector space $[U]$ such that $\cal D$ can be identified with a convex subset of $[U]$.
	\end{lemma}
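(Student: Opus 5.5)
The difficulty is that elements of $\cal D$ are possibly infinite, only $\sigma$-finite, measures, so they do not sit in the space of finite signed measures. I would therefore build a weighted space in which every $\mu\in\cal D$ becomes a finite measure. Convex combinations are then harmless, since the defining relations of $\cal D$ are linear.

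\textbf{Step 1: a weight function.} Let $U$ be the set of measurable functions $u:{\bf V}\times{\bf B}\to(0,\infty)$. I would fix one function that makes all elements of $\cal D$ finite. The natural candidate is
$$W(x,b):=1+\sum_{j=0}^J\bar C_j(x,b),$$
but $\int W\,d\mu<\infty$ fails whenever $\mu({\bf V}\times{\bf B})=\infty$. So I would instead use a strictly positive $w$ with $\int w\,d\mu<\infty$ for every $\mu\in\cal D$ simultaneously.

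\textbf{Step 2: construction of $w$.} The idea is to use the Bellman function $V$ from (\ref{e8}), which is strictly positive on $\bf V$. Take
$$w(x,b):=\sum_{j=0}^J\bar C_j(x,b)+\int_{{\bf V}}V(y)\,Q(dy|x,b).$$
Integrating the Bellman inequality $V(x)\le w(x,b)$ against $\mu$ shows that $w$ dominates $V$ pointwise, so $w>0$ on ${\bf V}\times{\bf B}$.

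For the reverse bound I would first truncate $V$ to $V\wedge1$ (or $V\wedge n$), so that only finite quantities appear. Using (\ref{e9}), one obtains
$$\int V\wedge1\,d\mu\le 1+\int\sum_j\bar C_j\,d\mu<\infty.$$
This follows from a telescoping/superharmonicity argument: $V\wedge1$ is still a supersolution of the Bellman inequality, and $\mu$ is obtained from $\delta_{x_0}$ by iterating $Q$. Hence
$$w':=\sum_j\bar C_j+\int (V\wedge1)(y)\,Q(dy|\cdot)$$
is $\mu$-integrable for every $\mu\in\cal D$, with integral at most $2\int\sum_j\bar C_j\,d\mu+1$. It is also strictly positive, because $\int(V\wedge 1)\,dQ+\sum_j\bar C_j\ge V\wedge1>0$ on ${\bf V}$.

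\textbf{Step 3: the vector space.} Define $[U]$ to be the vector space of signed measures $\eta$ on ${\bf V}\times{\bf B}$ such that $w'\,d\eta$ is a finite signed measure. Equivalently, these are the measures of the form $\eta=\zeta/w'$ with $\zeta$ a finite signed measure, and this description is clearly linear. Each $\mu\in\cal D$ belongs to $[U]$, and the identification $\mu\mapsto w'\mu$ is injective because $w'>0$. The set $\cal D$ is convex in $[U]$: equation (\ref{e9}) and the finiteness condition (\ref{e13p}) are preserved under convex combinations, and $\sigma$-finiteness of the marginals is also preserved. One could alternatively cite Lemma \ref{l5}, but the direct check is immediate.

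\textbf{Main obstacle.} The hard part is the integrability bound in Step 2, namely showing $\int (V\wedge1)\,d\mu$ is finite for a general solution $\mu$ of (\ref{e9}) rather than for a genuine occupation measure. I would first try to show that $\mu$ dominates the occupation measure of the induced strategy $\pi^\mu$ (Definition \ref{d4}), as in \cite{Dufour:2012}. Then the superharmonicity of $V\wedge 1$ gives the bound by a standard finite-horizon induction, followed by monotone convergence.
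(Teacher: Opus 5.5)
There is a genuine gap, at the step you flagged as the main obstacle. The estimate $\int (V\wedge 1)(x)\,\mu(dx\times{\bf B})\le 1+\int\sum_j\bar C_j\,d\mu$ is false, even when $\mu$ is a genuine occupation measure, so comparing $\mu$ with $\mu^{\pi^\mu}$ cannot rescue it. Consequently your weight $w'$ does not make every element of $\cal D$ finite.

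Integrating the Bellman inequality against $\mu$ via (\ref{e9}) at best gives the lower bound $V(x_0)\le\int\sum_j\bar C_j\,d\mu$; it never bounds $\int V\,d\mu$ from above. For $\mu=\mu^\pi$ you have $\int V(x)\,\mu^\pi(dx\times{\bf B})=\sum_i\EE^\pi_{x_0}[V(X_i)]$. Each $V(X_i)$ is controlled only by the cost-to-go from step $i$, so the cost paid at step $k$ is counted about $k+1$ times.

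A concrete instance: ${\bf X}=\{1/n:\ n\ge 1\}\cup\{0\}\subset\RR$, $\phi(x,t)=x$, ${\bf A}=\{a\}$, $l(1/n,a)=1/(n+1)$, $l(0,a)=0$, $J=0$, $C^g(1/n)=\alpha/n$, $C^I(1/n,a)=1/n^2$, $C^g(0)=C^I(0,a)=0$, $x_0=1$. Condition \ref{ConstrainedPPZcondition01} holds. The quantity $\bar C(1/n,(\theta,a))+\int V\,dQ$ is a convex combination, with weight $e^{-\alpha\theta}$, of $1/n$ and $1/n^2+V(1/(n+1))$. Since $1/n^2\ge 1/n-1/(n+1)$, telescoping gives $V(1/n)=1/n$ (waiting forever is optimal), so ${\bf V}=\{1/n:\ n\ge1\}$. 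The strategy that jumps immediately at every step has occupation measure $\mu=\sum_{n\ge1}\delta_{(1/n,(0,a))}\in{\cal D}$, with $\int\bar C\,d\mu=\sum_n n^{-2}<\infty$. Yet $\int (V\wedge1)(x)\,\mu(dx\times{\bf B})=\sum_n 1/n=\infty$ and $\int w'\,d\mu=\sum_n\bigl(n^{-2}+(n+1)^{-1}\bigr)=\infty$.

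As a side remark, $V\wedge 1$ need not be a supersolution either: because of the killing, the constant $1$ is not one, since $Q({\bf V}|x,b)\le e^{-\alpha\theta}$. Positivity of $w'$ still holds, because $w'$ and $w$ vanish on the same set.

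The way out is to abandon a single weight valid for all of $\cal D$ at once and let the localization depend on the element, which is what the paper does. Its $[U]$ consists of pairs $(P,\bar\nu)$, where $P=\{P_i\}$ is a countable measurable partition of ${\bf V}$ and $\nu_i$ is a finite signed measure on $P_i\times{\bf B}$. Sums are formed on the common refinement $\{P^1_i\cap P^2_j\}$, and two pairs are identified when they agree on every intersection. A measure whose marginal on ${\bf V}$ is $\sigma$-finite is represented by any partition on whose pieces that marginal is finite. This uses only the $\sigma$-finiteness already built into the definition of $\cal D$, so no integrability estimate is needed. Convexity then follows from the linearity of (\ref{e9}) and (\ref{e13p}), exactly as in your last step.

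If you prefer weights, the same idea works with classes of pairs $(w,\zeta)$, where $w>0$ is measurable and the finite signed measure $\zeta$ stands for $\zeta/w$. The weight is chosen per element, e.g.\ $w=\sum_i 2^{-i}\II\{x\in P_i\}/(1+\mu(P_i\times{\bf B}))$, and sums are formed with the common weight $\min\{w_1,w_2\}$.
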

	
	\section{First pair of convex programs}\label{sec5}
	
	We are ready to formulate the first pair of convex programs under Condition \ref{ConstrainedPPZcondition05} in case the MDP model is positive and semicontinuous. After introducing the {\sl Lagrangian}
	$$L_1(\mu,\bar g):=\int_{{\bf V} \times\textbf{B}}\bar{C}_0(x,b)\mu(dx\times db)+\sum_{j=1}^J g_j\left(\int_{{\bf V} \times\textbf{B}}\bar{C}_j(x,b)\mu(dx\times db)-d_j\right)$$
	for $\mu\in{\cal D}$, $\bar g:=(g_1,g_2,\ldots, g_J)\in\RR_+^J$, the primal and dual programs look, correspondingly, as follows:
	\begin{eqnarray}\label{enum4p}
		\mbox{Minimize over }~\mu\in{\cal D}&:&\sup_{\bar g\in\RR_+^J} L_1(\mu,\bar g);\\
		\label{enum4}
		\mbox{Maximize over }~ \bar g\in\RR_+^J&:&\inf_{\mu\in{\cal D}} L_1(\mu,\bar g).
	\end{eqnarray}
	\begin{itemize}
\item	val(\ref{enum4p})$:=\inf_{\mu\in{\cal D}}\sup_{\bar g\in\RR^J_+} L_1(\mu,\bar g)$. Similar notation is accepted for all other linear and convex programs.\\
\item	$h(\bar g):= \inf_{\mu\in{\cal D}} L_1(\mu,\bar g)$ is called the {\sl dual functional}.
\end{itemize}
	In the explicit form the primal program (\ref{enum4p}) can be rewritten as
	\begin{eqnarray}
		\mbox{Minimize}&:&\int_{{\bf V} \times\textbf{B}}\bar{C}_0(x,b)\mu(dx\times db) ~\mbox{ over } \mu\in{\cal D}\label{e13pp}\\
		\mbox{subject to}&:& \int_{{\bf V} \times\textbf{B}}\bar{C}_j(x,b)\mu(dx\times db) - d_j\le 0,~~~j=1,2,\dots,J.\label{e14pp}
	\end{eqnarray}
	
	\begin{remark}\label{rem71}
		If the MDP model is positive and semicontinuous and there is a feasible strategy $\pi$ with a finite value, then problem (\ref{PZZeqn02}) restricted to the strategies from $\Pi$ is equivalent to the program (\ref{e13pp})-(\ref{e14pp}) with ${\cal D}$ replaced by $\{\mu^\pi,~\pi\in\Pi\}\subset{\cal D}$: see Corollary \ref{cor1}.
	\end{remark}
	
	\begin{lemma}\label{l3}
		Suppose  the MDP model is positive and semicontinuous. Let a measure $\mu$ be feasible in the primal program  (\ref{e13pp})-(\ref{e14pp}). Then for each $j=0,1,\ldots, J$
		$$\EE^{\pi^\mu}_{x_0}\left[\sum_{i=1}^\infty \bar C_j(X_{i-1},B_i)\right]=\int_{{\bf V}\times{\bf B}} \bar C_j(x,b)\mu^{\pi^\mu}(dx\times db)\le\int_{{\bf V}\times{\bf B}} \bar C_j(x,b)\mu(dx\times db).$$
	Here $\pi^\mu$ is the induced strategy as in Definition \ref{d4}, and $\mu^{\pi^\mu}$ is its occupation measure  as in Definition \ref{15JuneDef02}. Note, $\mu$ and $\mu^{\pi^\mu}$ may be different.
	\end{lemma}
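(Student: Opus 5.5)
The equality on the left is routine. Since $\pi^\mu$ is stationary and coincides with $f^*$ on ${\bf V}^c$, it satisfies (\ref{e0}). By the monotone convergence argument given after Lemma \ref{l5}, the expected total cost equals $\int \bar C_j\,d\mu^{\pi^\mu}$ over ${\bf X}_\Delta\times{\bf B}$. The part over ${\bf V}^c\times{\bf B}$ vanishes by (\ref{e5}), exactly as in the computation preceding Definition \ref{d4}.

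The substance is the inequality. My plan is to show the setwise domination
$$\mu^{\pi^\mu}(\Gamma_1\times\Gamma_2)\le\mu(\Gamma_1\times\Gamma_2)\qquad\forall\,\Gamma_1\in{\cal B}({\bf V}),\ \Gamma_2\in{\cal B}({\bf B}).$$
This extends to all measurable subsets of ${\bf V}\times{\bf B}$ because both measures are built from the same kernel $\varphi_\mu$. Concretely, $\mu^{\pi^\mu}(dx\times db)=\varphi_\mu(db|x)\,\mu^{\pi^\mu}(dx\times{\bf B})$ on ${\bf V}\times{\bf B}$, and $\mu(dx\times db)=\varphi_\mu(db|x)\,\mu(dx\times{\bf B})$. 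It therefore suffices to prove the domination of the state marginals on ${\bf V}$:
$$m(\Gamma):=\mu^{\pi^\mu}(\Gamma\times{\bf B})\le \eta(\Gamma):=\mu(\Gamma\times{\bf B}),\qquad \Gamma\in{\cal B}({\bf V}).$$
Once this holds, integrating the nonnegative $\bar C_j$ first against $\varphi_\mu$ gives $\int\bar C_j\,d\mu^{\pi^\mu}\le\int\bar C_j\,d\mu$.

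To prove $m\le\eta$, I introduce the sub-stochastic kernel on ${\bf V}$
$$P(\Gamma|x):=\int_{\bf B}Q(\Gamma|x,b)\varphi_\mu(db|x),\qquad x\in{\bf V},\ \Gamma\in{\cal B}({\bf V}).$$
Equation (\ref{e9}) then reads $\eta=\delta_{x_0}+\eta P$ on ${\bf V}$. For $\pi^\mu$, write $m_n(\Gamma):=\PP^{\pi^\mu}_{x_0}(X_n\in\Gamma)$ for $\Gamma\subset{\bf V}$. A trajectory entering ${\bf V}^c$ never returns to ${\bf V}$, since $Q({\bf V}|x,f^*(x))=0$ on ${\bf V}^c$ by (\ref{e5}). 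Hence $m_0=\delta_{x_0}$ and $m_{n+1}=m_nP$, which gives $m=\sum_{n\ge0}\delta_{x_0}P^n$. On the other hand, iterating $\eta=\delta_{x_0}+\eta P$ $N$ times gives
$$\eta=\sum_{n=0}^{N}\delta_{x_0}P^n+\eta P^{N+1}\ge\sum_{n=0}^{N}\delta_{x_0}P^n.$$
Letting $N\to\infty$ yields $\eta\ge m$: $m$ is the minimal nonnegative solution of the characteristic equation. This is the standard argument, cf.\ Lemma 9.4.3 of \cite{HernandezLerma:1999}.

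The main obstacle is measure-theoretic care, since $\eta$ may be infinite. The iteration step uses only nonnegativity and Tonelli, so it is valid for infinite measures. The disintegration by $\varphi_\mu$ requires the $\sigma$-finiteness of $\eta$; this holds because $\mu\in{\cal D}$, as it is feasible with finite total cost, and Lemma \ref{l2} applies. The identity $\mu^{\pi^\mu}=\varphi_\mu\otimes m$ holds directly from the definition of the occupation measure under a stationary strategy: conditioning on $X_i$ gives $B_{i+1}\sim\varphi_\mu(\cdot|X_i)$ when $X_i\in{\bf V}$. The conditioning needs no finiteness. Since $\varphi_\mu$ is only determined $\eta$-a.e., I will also check that any $m$-relevant set is $\eta$-relevant, i.e.\ $m\ll\eta$. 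This follows from $m\le\eta$ itself, so the choice of version of $\varphi_\mu$ is immaterial.
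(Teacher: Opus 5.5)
Your proof is correct, but it takes a different route from the paper: the paper gives no argument of its own and simply invokes an external result.

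\textbf{What the paper does.} It cites Corollary 3.1 of \cite{Dufour:2012}, which is formulated with integration over ${\bf X}_\Delta\times{\bf B}$. It then remarks that, for measures in ${\cal D}$, ${\bf V}^c$ can be treated as an absorbing set with zero costs, so integration over ${\bf V}^c$ can be dropped.

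\textbf{What you do instead.} You prove the underlying fact directly, in three steps.
\begin{itemize}
\item You use (\ref{e5}) to show that mass entering ${\bf V}^c$ never returns to ${\bf V}$. This makes the paper's ``absorbing'' remark rigorous.
\item You write the ${\bf V}$-marginal of $\mu^{\pi^\mu}$ as $\sum_n\delta_{x_0}P^n$. Iterating (\ref{e9}) shows this is the minimal nonnegative solution, hence dominated by $\mu(dx\times{\bf B})$.
\item You transfer this domination to ${\bf V}\times{\bf B}$ through the common kernel $\varphi_\mu$.
\end{itemize}
Your side remark on versions of $\varphi_\mu$ is not circular. The inequality $m\le\eta$ is proved for each fixed version, so the conclusion is version-independent.

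\textbf{What each route buys.} Your approach is self-contained and gives a stronger statement: setwise domination $\mu^{\pi^\mu}\le\mu$ on ${\bf V}\times{\bf B}$, not only domination of the costs. It also makes clear that feasibility and finiteness of the cost are irrelevant. Only three ingredients are needed:
\begin{itemize}
\item equation (\ref{e9});
\item $\sigma$-finiteness of the marginal, which is already part of the definition of ${\cal D}$, so Lemma \ref{l2} is not needed;
\item property (\ref{e5}) of $f^*$.
\end{itemize}
You also use the standing assumption $x_0\in{\bf V}$ to get $m_0=\delta_{x_0}$. The citation route is shorter, but it leaves the reader to check that the result stated on ${\bf X}_\Delta$ survives restriction to ${\bf V}$. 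In your version that check is exactly the step using (\ref{e5}).
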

	
	For the proof, see Corollary 3.1 of \cite{Dufour:2012}. In that corollary, integration was over ${\bf X}_\Delta\times{\bf B}$, but, being restricted to the measures from $\cal D$, one can  accept that ${\bf V}^c$ is the  absorbing set with zero costs $\bar C_j(\cdot,\cdot)$. Thus integration over ${\bf V}^c$ can be ignored.  Note that the strategy $\pi^\mu$, as in Lemma \ref{l3}, belongs to the class $\Pi$.
	
	\begin{theorem}\label{t1}
		Suppose the model is positive and semicontinuous and there is a feasible strategy $\pi$ with a finite value. Then the following assertions hold.
		\begin{itemize}
			\item[(a)] val(\ref{PZZeqn02})$=$val(\ref{enum4p}) is finite.
			\item[(b)] If a strategy $\pi^*\in\Pi$ is feasible and optimal in problem (\ref{PZZeqn02}), then the measure $\mu^{\pi^*}$ is feasible and optimal in the primal  program (\ref{e13pp})-(\ref{e14pp}). Note that $\pi^*$ exists by Lemma~\ref{l1}.
			\item[(c)] If a measure $\mu^*$ is feasible and optimal in the primal  program  (\ref{e13pp})-(\ref{e14pp}), then the induced strategy $\pi^{\mu^*}$ is feasible and optimal in problem (\ref{PZZeqn02}) and $\pi^{\mu^*}\in\Pi$.
		\end{itemize}
	\end{theorem}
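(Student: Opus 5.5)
The plan is to compare the two optimization problems through occupation measures and induced strategies, sandwiching the values. First I would note that $\sup_{\bar g\in\RR^J_+}L_1(\mu,\bar g)$ equals $\int\bar C_0\,d\mu$ when $\mu$ satisfies (\ref{e14pp}) and $+\infty$ otherwise. Hence val(\ref{enum4p}) is the value of the explicit program (\ref{e13pp})-(\ref{e14pp}) over $\cal D$.

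For the inequality val(\ref{enum4p})$\le$val(\ref{PZZeqn02}), take an optimal $\pi^*\in\Pi$, which exists by Lemma \ref{l1} and the remark following Definition \ref{d3}. By Corollary \ref{cor1}, $\mu^{\pi^*}$ restricted to ${\bf V}\times{\bf B}$ has a $\sigma$-finite marginal. It satisfies (\ref{e9}) by the computation preceding Definition \ref{d4}, which uses (\ref{e5}). Its costs equal ${\cal V}_j(x_0,\pi^*)$. These are finite for $j=0$ because $\pi^*$ has finite value, and for $j\ge1$ because they are at most $d_j$. So (\ref{e13p}) holds and $\mu^{\pi^*}\in\cal D$ is feasible in (\ref{e13pp})-(\ref{e14pp}) with objective ${\cal V}_0(x_0,\pi^*)=$val(\ref{PZZeqn02}). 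This argument also gives the feasibility claim in part (b).

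For the reverse inequality, take any feasible $\mu\in\cal D$ and apply Lemma \ref{l3}. The induced strategy $\pi^\mu$ satisfies ${\cal V}_j(x_0,\pi^\mu)\le\int\bar C_j\,d\mu$ for all $j$. It is therefore feasible in (\ref{PZZeqn02}) with finite value, and it belongs to $\Pi$ by the note after Lemma \ref{l3}. Consequently val(\ref{PZZeqn02})$\le{\cal V}_0(x_0,\pi^\mu)\le\int\bar C_0\,d\mu$. Taking the infimum over feasible $\mu$ gives val(\ref{PZZeqn02})$\le$val(\ref{enum4p}), so the two values are equal. The common value is finite because Condition \ref{ConstrainedPPZcondition05} provides a feasible strategy with finite value, and it is nonnegative. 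This proves (a).

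For (b), $\mu^{\pi^*}$ attains val(\ref{PZZeqn02}), which equals val(\ref{enum4p}) by (a), so it is optimal in the primal program. For (c), the chain of inequalities above with $\mu=\mu^*$ shows that $\pi^{\mu^*}\in\Pi$ is feasible and that ${\cal V}_0(x_0,\pi^{\mu^*})\le\int\bar C_0\,d\mu^*=$val(\ref{enum4p})$=$val(\ref{PZZeqn02}); hence $\pi^{\mu^*}$ is optimal.

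The only delicate step is checking that $\mu^{\pi^*}$ really lies in $\cal D$: its restriction to ${\bf V}$ must satisfy (\ref{e9}) with integration over ${\bf V}\times{\bf B}$ only. This is exactly where the convention (\ref{e0}) and property (\ref{e5}) are used: mass starting in ${\bf V}^c$ never returns to ${\bf V}$ and incurs no cost. Everything else follows directly from Lemmas \ref{l1} and \ref{l3}.
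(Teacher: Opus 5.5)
Your proposal is correct and follows essentially the paper's proof: you obtain one inequality by placing occupation measures of strategies in $\Pi$ inside ${\cal D}$, the other by applying Lemma \ref{l3} to the induced strategy, and you read off (b) and (c) from the equality of values. The only difference is cosmetic: for val(\ref{enum4p})$\le$val(\ref{PZZeqn02}) the paper cites Remark \ref{rem71} directly, whereas you embed the occupation measure of an optimal $\pi^*\in\Pi$ from Lemma \ref{l1}, which makes explicit why restricting to $\Pi$ loses nothing.
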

	
	\begin{condition}\label{con4} (Slater condition.) There exists $\mu\in{\cal D}$ such that
		$$ \int_{{\bf V} \times\textbf{B}}\bar{C}_j(x,b)\mu(dx\times db) < d_j,~~~j=1,2,\dots,J.$$
	\end{condition}
	
	\begin{theorem}\label{prop2}
		Suppose the model is positive and semicontinuous,  there is a feasible strategy $\pi$ with a finite value, and the Slater condition is satisfied. Then the following assertions hold.
		\begin{itemize}
			\item[(a)] There is no duality gap: $\inf_{\mu\in{\cal D}}\sup_{\bar g\in\RR_+^J} L_1(\mu,\bar g)=\sup_{\bar g\in\RR_+^J} \inf_{\mu\in{\cal D}} L_1(\mu,\bar g)$, and there exists at least one $\bar g^*\in\RR_+^J$ solving the dual convex program (\ref{enum4}). The dual functional $h(\cdot)$ is concave.
			\item[(b)] A point $\mu^*\in{\cal D}$ is an optimal (and feasible) solution to the primal convex program (\ref{enum4p}) and $\bar g^*\in\RR_+^J$ is an optimal solution to the dual convex program (\ref{enum4})
			if and only if one of the following two equivalent statements holds. 
			\begin{itemize}
				\item[(i)]  The pair $(\mu^*,\bar g^*)$ is a {\sl saddle point} of the Lagrangian $L_1$:
				\begin{eqnarray*}
					L_1(\mu^*,\bar g)\le L_1(\mu^*,\bar g^*)\le L_1(\mu,\bar g^*),~\forall~ \mu\in{\cal D},~\bar g\in\RR_+^J.
				\end{eqnarray*}
				\item[(ii)]
				The following relations hold:
				\begin{eqnarray*}
					&&\int_{{\bf V} \times\textbf{B}}\bar{C}_j(x,b)\mu^*(dx\times db) \le d_j,~j=1,2,\ldots, J;\\
					&& L_1(\mu^*,\bar g^*)=\inf_{\mu\in{\cal D}} L_1(\mu,\bar g^*)=\int_{{\bf V} \times\textbf{B}}\bar{C}_0(x,b)\mu^*(dx\times db);\\
					&& \sum_{j=1}^J g^*_j\left( \int_{{\bf V} \times\textbf{B}}\bar{C}_j(x,b)\mu^*(dx\times db)-d_j\right)=0.
				\end{eqnarray*}
				The last equality is known as the {\sl Complementary Slackness Condition}. If $\bar g^*$ is known to be optimal in the dual convex program (\ref{enum4}) and the first two conditions in (ii) are fulfilled, then the complementary slackness condition is satisfied automatically.
			\end{itemize}
		\end{itemize}
	\end{theorem}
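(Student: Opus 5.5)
The plan is to apply the standard Lagrange duality theorem for convex programs in a vector space, e.g.\ Luenberger's Theorem~8.6.1 together with the saddle point theorem (Theorem~8.4.1 / Corollary~8.6.1). By Lemma~\ref{l12}, $\cal D$ is a convex subset of the vector space $[U]$. The objective $\mu\mapsto\int\bar C_0\,d\mu$ and the constraint functionals $\mu\mapsto\int\bar C_j\,d\mu-d_j$ are linear, hence convex, on $\cal D$. They are also finite-valued there by (\ref{e13p}). The Slater condition~\ref{con4} provides the interior point that Luenberger's theorem needs; the constraint space is $\RR^J$ with the positive cone $\RR^J_+$, which has nonempty interior. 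By Theorem~\ref{t1}(a), val(\ref{enum4p}) equals val(\ref{PZZeqn02}) and is finite. We also note that $\sup_{\bar g}L_1(\mu,\bar g)$ equals $\int\bar C_0\,d\mu$ if $\mu$ is feasible and $+\infty$ otherwise, so val(\ref{enum4p}) is exactly the value of (\ref{e13pp})-(\ref{e14pp}).

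For part (a), Luenberger's theorem gives $\inf\{\int\bar C_0\,d\mu:\mu\in{\cal D}\text{ feasible}\}=\max_{\bar g\ge0}h(\bar g)$, with the maximum attained at some $\bar g^*$. Hence there is no duality gap and the dual program is solvable. If one wants to avoid relying on the exact form of the cited theorem, the argument can be done directly. Consider the set
$$\{(r,\bar z)\in\RR^{1+J}:\exists\mu\in{\cal D},\ r\ge\textstyle\int\bar C_0d\mu,\ z_j\ge\int\bar C_jd\mu-d_j\}.$$
This set is convex, and it does not meet the open convex set $\{r<\mathrm{val},\bar z<0\}$. Separating the two sets by a hyperplane gives a normal $(g_0,\bar g)\ge0$. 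The Slater point forces $g_0>0$, and normalizing $g_0=1$ yields $\bar g^*$ with $h(\bar g^*)\ge$ val. Weak duality, $h(\bar g)\le L_1(\mu,\bar g)\le\int\bar C_0d\mu$ for feasible $\mu$, gives the reverse inequality. Concavity of $h$ is immediate, since it is an infimum of affine functions of $\bar g$.

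For part (b), the argument is the usual one and needs no extra topology. Assume $\mu^*$ is primal optimal and $\bar g^*$ is dual optimal. By (a),
$$\int\bar C_0d\mu^*=h(\bar g^*)\le L_1(\mu^*,\bar g^*)\le\int\bar C_0d\mu^*.$$
The last inequality holds because of feasibility and $\bar g^*\ge0$. So equality holds throughout, which gives (ii) including complementary slackness. From (ii) the saddle point (i) follows: the right-hand inequality is the statement $L_1(\mu^*,\bar g^*)=\inf_\mu L_1(\mu,\bar g^*)$. The left-hand inequality holds because $L_1(\mu^*,\bar g)\le\int\bar C_0d\mu^*=L_1(\mu^*,\bar g^*)$ for every $\bar g\ge0$, by feasibility together with the second relation.

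Conversely, assume (i). Boundedness of $L_1(\mu^*,\cdot)$ from above over $\RR^J_+$ forces feasibility of $\mu^*$. Taking $\bar g=0$ gives $\sum g^*_j(\cdot)\ge0$, hence complementary slackness. The chain
$$\sup_{\bar g}L_1(\mu^*,\bar g)=L_1(\mu^*,\bar g^*)=h(\bar g^*)$$
together with weak duality shows that both $\mu^*$ and $\bar g^*$ are optimal. The equivalence of (i) and (ii) follows the same lines. The final remark in (ii) is already contained in the chain above: once $\bar g^*$ is dual optimal and the first two relations hold,
$$\int\bar C_0d\mu^*=L_1(\mu^*,\bar g^*)=\int\bar C_0d\mu^*+\sum g_j^*(\cdots),$$
which forces the sum to be zero.

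The main obstacle is the separation step in (a), that is, verifying the hypotheses of the abstract duality theorem. Convexity of the epigraph-type set relies on $\cal D$ being convex in $[U]$ (Lemma~\ref{l12}) and on the integrals being linear in $\mu$ under the identification. Finiteness on $\cal D$ (by (\ref{e13p})) matters so that no $\infty-\infty$ issues arise. Since the separation takes place in the finite-dimensional space $\RR^{1+J}$, no topology on $[U]$ is needed.
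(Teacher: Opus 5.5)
Your proposal is correct and follows the paper's route. The paper proves the theorem only by citing the Lagrange duality theorem of Section 8.6 of \cite{b42}, which applies because of Lemma \ref{l12}, together with the saddle-point theorem (Theorem 2 of \cite{rock}); you invoke the same results, and your separation argument in $\RR^{1+J}$ and your elementary saddle-point checks simply unpack those citations. One small observation: your direct separation argument only needs ${\cal D}$ to be closed under convex combinations of measures, so it does not actually require the vector-space embedding of Lemma \ref{l12}.
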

	
	For the proof see  Section 8.6 of \cite{b42} and Theorem 2 of \cite{rock}.  Statements from \cite{b42} are applicable by Lemma \ref{l12}.
	
	\begin{remark}\label{rem3}
		Suppose all the conditions in Theorem \ref{prop2}  are satisfied. 
		\begin{itemize}
			\item[(a)] For a fixed $\bar g\in\RR_+^J$ the problem $L_1(\mu,\bar g)\to\inf_{\mu\in{\cal D}}$ (up to the constant $\sum_{j=1}^J g_jd_j$) is equivalent to the program (\ref{enum4p}) with $J=0$, where $\bar C_0(\cdot,\cdot)$ is replaced by $\bar C_0(\cdot,\cdot)+\sum_{j=1}^J g_j\bar C_j(\cdot,\cdot)$. Therefore, by Theorem \ref{t1}(a)
			\begin{eqnarray*}
				h(\bar g)&:=&\inf_{\mu\in{\cal D}} L_1(\mu,\bar g)\\
				&=&\inf_{\pi} \EE^\pi_{x_0}\left[\sum_{i=1}^\infty\left( \bar C_0(X_{i-1},B_i)+\sum_{j=1}^Jg_j\bar C_j(X_{i-1},B_i)\right)\right]-\sum_{j=1}^J g_jd_j.
			\end{eqnarray*}
			\item[(b)] Let $\mu^*$ be the optimal measure in the primal convex program (\ref{enum4p}), which exists by Lemma \ref{l1} and Theorem \ref{t1}(b). For $\bar g^*\in\RR_+^J$ solving the  dual program (\ref{enum4}), the common optimal value of the both programs (\ref{enum4p}) and (\ref{enum4}) equals
			\begin{eqnarray*}
				&&\int_{{\bf V}\times{\bf B}}\bar C_0(x,b)\mu^*(dx\times db)
				=\inf_{\mu\in{\cal D}} L_1(\mu,\bar g^*)\\
				&=&\inf_\pi\EE^\pi_{x_0}\left[\sum_{i=1}^\infty\left( \bar C_0(X_{i-1},B_i)+\sum_{j=1}^Jg^*_j\bar C_j(X_{i-1},B_i)\right)\right]-\sum_{j=1}^J g_j^*d_j:
			\end{eqnarray*}
			see Theorem \ref{prop2}(b-ii).
		\end{itemize}
	\end{remark}
	
	\section{Second pair of convex programs}\label{sec6}
	
	\begin{condition}\label{con5}\begin{itemize}
			\item[(a)] $C^I_0(x,a)\ge \delta>0$ for all $x\in{\bf X},~a\in{\bf A}$. 
			\item[(b)] ${\cal C}:=\max_{j=0,1,\ldots,J}\sup_{x\in{\bf X}} C^g_j(x)+\max_{j=0,1,\ldots,J}\sup_{(x,a)\in{\bf X}\times{\bf A}} C^I_j(x,a)<\infty$.
		\end{itemize}
	\end{condition}
	
	Under Condition \ref{con5}(a), if the MDP under investigation is positive and semicontinuous, then,
	for each $x\in{\bf V}^c\setminus\{\Delta\}$, necessarily by (\ref{e5}) $f^*(x)=(\infty,\hat a)$ with $\hat a\in{\bf A}$ being arbitrary. Hence, in this case, for each $x\in{\bf V}^c\setminus\{\Delta\}$ $C^g_j(\phi(x,t))\equiv 0$ almost everywhere with respect to the Lebesgue measure for all $j=0,1,\ldots, J$. One can also say that $x\in{\bf V}^c$ if and only if $C^g_j(\phi(x,t))\equiv 0$ almost everywhere with respect to the Lebesgue measure for all $j=0,1,\ldots,J$, and $Q(\{\Delta\}|x,f^*(x))=1$.
	
	\begin{lemma}\label{l4}
		Suppose the model is positive and Condition \ref{con5}(a) is satisfied. Then for each given strategy $\pi$, if $\mu^\pi({\bf X}\times{\bf B})=\infty$, then ${\cal V}_0(x_0,\pi)=\infty$. 
	\end{lemma}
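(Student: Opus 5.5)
The idea is to bound the one-step cost $\bar C_0$ from below in terms of the probability of staying in ${\bf X}$ at the next step, and then sum along the trajectory. Since $\mu^\pi({\bf X}\times{\bf B})=\EE^\pi_{x_0}\left[\sum_{i=0}^\infty\II\{X_i\in{\bf X}\}\right]$, it is enough to prove
\[
{\cal V}_0(x_0,\pi)\ge c\,\Bigl(\mu^\pi({\bf X}\times{\bf B})-1\Bigr)
\]
for some constant $c>0$.

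Fix $x\in{\bf X}$ and $b=(\theta,a)$. If $\theta<\infty$, the positivity of $C^g_0$ and Condition~\ref{con5}(a) give
$$\bar C_0(x,b)\ge e^{-\alpha\theta}C^I_0(\phi(x,\theta),a)\ge\delta e^{-\alpha\theta}=\delta\,Q({\bf X}|x,b).$$
If $\theta=\infty$, then $Q({\bf X}|x,b)=0$ and the same inequality $\bar C_0(x,b)\ge \delta\,Q({\bf X}|x,b)$ holds trivially. For $x=\Delta$ both sides vanish. So $\bar C_0(x,b)\ge\delta Q({\bf X}|x,b)$ on all of ${\bf X}_\Delta\times{\bf B}$.

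Next I integrate this inequality against $\mu^\pi$ and apply the characteristic equation (\ref{enum1}) with $\Gamma={\bf X}$:
$$ {\cal V}_0(x_0,\pi)=\int\bar C_0\,d\mu^\pi\ge\delta\int Q({\bf X}|y,b)\mu^\pi(dy\times db)=\delta\bigl(\mu^\pi({\bf X}\times{\bf B})-1\bigr).$$
All terms are nonnegative, so the identity is valid even when the values are infinite. If $\mu^\pi({\bf X}\times{\bf B})=\infty$, the right-hand side is $\infty$, and hence ${\cal V}_0(x_0,\pi)=\infty$.

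There is also a pathwise version of the same argument: by the tower property, $\EE[\bar C_0(X_{i-1},B_i)]\ge\delta\,\PP(X_i\in{\bf X})$, and summing over $i$ gives the same bound. The only point needing care is the subtraction of $1$, which comes from the $\delta_{x_0}$ term. I avoid it by always keeping the form "$\ge$ something infinite", never subtracting infinite quantities. I do not expect any real obstacle; the proof is a short computation once the bound $\bar C_0\ge\delta Q({\bf X}|\cdot)$ is in place.
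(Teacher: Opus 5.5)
Your proof is correct and is essentially the paper's argument. The paper also applies (\ref{enum1}) with $\Gamma={\bf X}$ to get $\infty=1+\int_{[0,\infty)}e^{-\alpha\theta}\mu^\pi({\bf X}\times d\theta\times{\bf A})$ and then uses $\bar C_0(x,(\theta,a))\ge\delta e^{-\alpha\theta}$ for finite $\theta$, which is your bound $\bar C_0\ge\delta\,Q({\bf X}|\cdot)$ written out explicitly; your treatment of $\theta=\infty$, $x=\Delta$ and the infinite quantities is fine.
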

	
	According to Lemma \ref{l4}, under Conditions   \ref{ConstrainedPPZcondition05}  and \ref{con5}(a), if the MDP model is positive and semicontinuous, then one can investigate the primal  program (\ref{enum4p}) in the space of finite measures on ${\bf V}\times{\bf B}$. 
	
	\begin{definition}\label{d5}
		Assuming that Conditions \ref{ConstrainedPPZcondition05} and \ref{con5}(a) are satisfied and the MDP model is positive and semicontinuous, we denote
		${\cal M}^f$ the space of all the restrictions of  occupation measures on ${\bf V}\times{\bf B}$ which are finite.
	\end{definition}
	
	Now, under the conditions  in Definition \ref{d5}, the primal  program (\ref{e13pp})-(\ref{e14pp}) can be rewritten in the following way:
	\begin{eqnarray}\label{e21}
		\mbox{Minimize}&:&\int_{{\bf V} \times\textbf{B}}\bar{C}_0(x,b)\mu(dx\times db)
		\mbox{ over finite measures $\mu$ on ${\bf V}\times \textbf{B}$} \\
		\mbox{subject to}&:& \mu(\Gamma\times \textbf{B})=\delta_{x_0}(\Gamma)+\int_{{\bf V}\times \textbf{B}}Q(\Gamma|y,b)\mu(dy\times db),~\Gamma\in{\cal B}({\bf V})\label{e22}\\
		\mbox{and}&&\int_{{\bf V} \times\textbf{B}}\bar{C}_j(x,b)\mu(dx\times db)\le d_j,~j=1,2,\dots,J. \label{e23}
	\end{eqnarray}
	
	\begin{definition}\label{d8}
		${\cal D}^f$ is the space of finite measures $\mu$ on ${\bf V}\times{\bf B}$ and $\cal W$ is the (linear) space of bounded measurable functions $W(\cdot)$ on $\bf V$. Clearly, ${\cal D}^f$ is the positive cone in the linear space of finite signed measures on ${\bf V}\times{\bf B}$.
		The Lagrangian on ${\cal D}^f\times{\cal W}\times\RR^j_+$ is defined by
		\begin{eqnarray*}
			L_2(\mu,W,\bar g)&:=& \int_{{\bf V}\times{\bf B}} \bar C_0(x,b)\mu(dx\times db)\\
			&&+\int_{\bf V} W(x)\left[\delta_{x_0}(dx)+\int_{\bf V} Q(dx|y,b)\mu(dy\times db)-\mu(dx\times{\bf B})\right]\\
			&&+\sum_{j=1}^J g_j\left(\int_{{\bf V}\times{\bf B}}\bar C_j(x,b)\mu(dx\times db)-d_j\right).
		\end{eqnarray*}
	\end{definition}
	
	Since $\bar C_j(\cdot,\cdot)$ and $g_j$ are non-negative, the Lagrangian $L_2$ is well defined and the terms therein can be arbitrarily rearranged.
	
	It is straightforward to check that the primal convex (in fact, linear) program (\ref{e21})-(\ref{e23}) is equivalent to
	\begin{equation}\label{e24}
		\mbox{Minimize over }~\mu\in{\cal D}^f~:~~\sup_{(W(\cdot),\bar g)\in{\cal W}\times\RR_+^J} L_2(\mu,W,\bar g).
	\end{equation}
	
	\begin{remark}\label{r2}
		If equality (\ref{e22}) is violated, then there exist $\varepsilon>0$ and $\Gamma\in{\cal B}({\bf V})$ such that
		$$\delta_{x_0}(\Gamma)+\int_{\bf  V} Q(\Gamma|y,b)\mu(dy\times db)-\mu(\Gamma\times{\bf B})\ge\varepsilon~(\mbox{or } \le-\varepsilon).$$
		Now for $W_N(x)=N\II\{x\in\Gamma\}$ (or for $W_N(x)=-N\II\{x\in\Gamma\}$) we see that \linebreak
		$\lim_{N\to\infty} L_2(\mu,W,\bar g)=+\infty$. The similar reasoning applies to constraints (\ref{e23}). At the same time, according to Condition  \ref{ConstrainedPPZcondition05}, there exists $\mu^\pi\in{\cal D}^f$ for which
		$$\sup_{(W(\cdot),\bar g)\in{\cal W}\times\RR_+^J}L_2(\mu,W,\bar g)=L_2(\mu,W,{\bf 0})=\int_{{\bf V}\times{\bf B}}\bar C_0(x,b)\mu^\pi(dx\times db)={\cal V}_0(x_0,\pi)<\infty.$$
		Here $\mu^\pi$ is the restriction of the occupation measure on ${\bf V}\times{\bf B}$, and ${\bf 0}\in\RR_+^J$ is the zero vector.
	\end{remark}
	
	The {\sl dual convex program} is 
	\begin{equation}\label{e25}
		\mbox{Maximize over }~(W(\cdot),\bar g)\in{\cal W}\times\RR_+^J~:~~\inf_{\mu\in{\cal D}^f} L_2(\mu,W,\bar g).
	\end{equation}
	
	One can solve the dual program (\ref{e25}) in two steps: firstly, for a fixed $\bar g\in\RR_+^J$, solve problem
	\begin{equation}\label{e26}
		\inf_{\mu\in{\cal D}^f} L_2(\mu,W,\bar g)\to \sup_{W(\cdot)\in{\cal W}};
	\end{equation}
	and after that  maximize $\sup_{W(\cdot)\in{\cal W}} \inf_{\mu\in{\cal D}^f} L_2(\mu,W,\bar g)$ with respect to $\bar g\in\RR_+^J$.
	Under Condition \ref{con5}(b),
	ignoring the constant $\sum_{j=1}^J g_jd_j$, problem (\ref{e26}) can be rewritten as follows:
	\begin{eqnarray}\label{e27}
		\mbox{Maximize}&:& W(x_0)
		\mbox{ over } W(\cdot)\in{\cal W} \\
		\mbox{subject to}&:&
		\bar C_0(x,b)+\sum_{j=1}^J g_j\bar C_j(x,b)+\int_{\bf V} W(y) Q(dy|x,b)-W(x)\ge 0,\label{e28}\\
		&&~~~~~~~~~~~(x,b)\in{\bf V}\times{\bf B}.\nonumber
	\end{eqnarray}
	Indeed, if inequality (\ref{e28}) is violated, that is, for some $\varepsilon>0$, the left-hand part is below $-\varepsilon$ for some $(\hat x,\hat b)\in{\bf V}\times{\bf B}$,
	then, for the Dirac measures $\mu_N=N\delta_{(\hat x,\hat b)}(dx\times db)\in{\cal D}^f$,   $\lim_{N\to\infty}L_2(\mu_N,W,\bar g)=-\infty$.
	At the same time, $\inf_{\mu\in{\cal D}^f} L_2(\mu,W,\bar g)$ for functions $W(\cdot)$ satisfying (\ref{e28}) is provided by $\mu\equiv 0$ and equals $W(x_0)-\sum_{j=1}^J g_jd_j>-\infty$.
	
	\begin{theorem}\label{t5}
		Suppose Conditions  \ref{ConstrainedPPZcondition05}, \ref{con4}  and \ref{con5} are satisfied and the MDP model is positive and semicontinuous. Then the following assertions hold.
		\begin{itemize}
			\item[(a)] For a fixed $\bar g\in\RR_+^J$, the Bellman function
			\begin{eqnarray}\label{e31}
				W^*_{\bar g}(x)&:=&\inf_\pi\EE^\pi_x\left[\sum_{i=1}^\infty\left(\bar C_0(X_{i-1},B_i)+\sum_{j=1}^J g_j\bar C_j(X_{i-1},B_i)\right)\right]\\
				&=&\inf_\pi\left\{{\cal V}_0(x,\pi)+\sum_{j=1}^J g_j{\cal V}_j(x,\pi)\right\},
				~~~x\in{\bf V}\nonumber
			\end{eqnarray}
			is bounded, non-negative and lower semicontinuous, and 
			solves problem (\ref{e27}-\ref{e28}). Here one can certainly restrict to such strategies $\pi$ that $\pi_i(db|h_{i-1})=\delta_{f^*(x_{i-1})}(db)$ if $x_{i-1}$, the last component of $h_{i-1}$ is in ${\bf V}^c$, so that 
			$$W^*_{\bar g}(x)=\inf_\pi\EE^\pi_x\left[\sum_{i=1}^\infty\left(\bar C_0(X_{i-1},B_i)+\sum_{j=1}^J g_j\bar C_j(X_{i-1},B_i)\right)\right]\equiv 0,~~~x\in{\bf V}^c.$$
			\item[(b)] The optimal value of problem (\ref{e27}-\ref{e28}), $W^*_{\bar g}(x_0)$ is such that $W^*_{\bar g}(x_0)-\sum_{j=1}^J g_jd_j$ coincides with the concave function $h(\bar g)$ as in Theorem \ref{prop2}(a). Thus, an optimal value $\bar g^*$ providing
			$$\max_{\bar g\in\RR_+^J}\left\{\max_{W(\cdot)\in{\cal W}} \inf_{\mu\in{\cal D}^f} L_2(\mu,W,\bar g)\right\}=\max_{\bar g\in\RR_+^J} h(\bar g)$$
			exists.
			\item[(c)] The pair $(W^*_{\bar g^*},\bar g^*)$ is a solution to the dual program (\ref{e25}).
			\item[(d)] There is no duality gap:
			$$\inf_{\mu\in{\cal D}^f}\sup_{(W(\cdot),\bar g)\in{\cal W}\times\RR_+^J} L_2(\mu,W,\bar g)=\sup_{(W(\cdot),\bar g)\in{\cal W}\times\RR_+^J}\inf_{\mu\in{\cal D}^f}L_2(\mu,W,\bar g).$$
		\end{itemize}
	\end{theorem}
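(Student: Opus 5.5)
Part (a) comes first, since everything else reduces to it. Fix $\bar g\in\RR_+^J$ and consider the unconstrained MDP with one-step cost $\bar C^{\bar g}:=\bar C_0+\sum_{j=1}^J g_j\bar C_j$. By Proposition \ref{pr1} this cost is non-negative and lower semicontinuous, so the model is positive semicontinuous. Corollary 9.17.2 of \cite{Bertsekas:1978} then gives three things: $W^*_{\bar g}$ is non-negative and lower semicontinuous, it satisfies the Bellman equation
$$W^*_{\bar g}(x)=\inf_{b\in{\bf B}}\Big\{\bar C^{\bar g}(x,b)+\int_{{\bf X}_\Delta}W^*_{\bar g}(y)Q(dy|x,b)\Big\},$$
and the infimum is attained by a deterministic stationary selector. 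For boundedness, take $b=(\infty,a)$, i.e.\ apply no impulses. Its cost is at most $\int_0^\infty e^{-\alpha t}\,{\cal C}(1+\sum_j g_j)\,dt={\cal C}(1+\sum_j g_j)/\alpha$ by Condition \ref{con5}(b). Hence $W^*_{\bar g}$ is bounded and lies in ${\cal W}$.

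To see that $W^*_{\bar g}$ vanishes on ${\bf V}^c$, use (\ref{e5}): following $f^*$ there produces zero cost for every $j$. So in the Bellman equation the integral over ${\bf X}_\Delta$ can be replaced by the integral over ${\bf V}$, and (\ref{e28}) follows immediately for $W=W^*_{\bar g}$. It remains to show optimality in (\ref{e27}), i.e.\ that any $W\in{\cal W}$ satisfying (\ref{e28}) has $W(x_0)\le W^*_{\bar g}(x_0)$.

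For this, take any strategy $\pi\in\Pi$ with finite cost, and extend $W$ by zero on ${\bf V}^c$. Iterating (\ref{e28}) along the trajectory gives
$$W(x_0)\le \EE^\pi_{x_0}\Big[\sum_{i=1}^n \bar C^{\bar g}(X_{i-1},B_i)\Big]+\EE^\pi_{x_0}[W(X_n)\II\{X_n\in{\bf V}\}].$$
The remainder term is bounded by $\sup|W|\cdot\PP^\pi_{x_0}(X_n\in{\bf V})$. This probability tends to zero because $\sum_n\PP^\pi_{x_0}(X_n\in{\bf V})=\mu^\pi({\bf V}\times{\bf B})$, and this total mass is finite whenever the cost is finite, by Lemma \ref{l4} together with $C^I_0\ge\delta$. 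This gives $W(x_0)\le {\cal V}^{\bar g}(x_0,\pi)$. If $\pi$ has infinite cost the inequality is trivial, so taking the infimum over $\pi$ yields $W(x_0)\le W^*_{\bar g}(x_0)$. This transversality step is where the boundedness of $W$ and Condition \ref{con5}(a) are genuinely used, and I expect it to be the main obstacle.

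For (b), Remark \ref{rem3}(a) identifies $h(\bar g)$ with $W^*_{\bar g}(x_0)-\sum_j g_jd_j$. Combined with (a) and the reduction of (\ref{e26}) to (\ref{e27})--(\ref{e28}) shown just before the theorem, this gives
$$\max_{W}\inf_{\mu\in{\cal D}^f}L_2=h(\bar g).$$
The existence of a maximizer $\bar g^*$ of $h$ is Theorem \ref{prop2}(a), which applies because the Slater condition holds. Then (c) follows directly, since $(W^*_{\bar g^*},\bar g^*)$ attains $\max_{\bar g}h$.

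For (d), observe that the sup--inf equals $\max h=$val(\ref{enum4}). This equals val(\ref{enum4p}) by Theorem \ref{prop2}(a). In turn val(\ref{enum4p}) equals the inf--sup of $L_2$ over ${\cal D}^f$. That last equality holds because (\ref{e24}) is equivalent to (\ref{e21})--(\ref{e23}) (Remark \ref{r2}), and by Lemma \ref{l4} and Theorem \ref{t1} restricting to finite measures does not change the value.
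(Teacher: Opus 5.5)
Your proof is correct. For (b)--(d) you follow the paper essentially step for step: Remark~\ref{rem3}(a) identifies $h(\bar g)$ with $W^*_{\bar g}(x_0)-\sum_j g_jd_j$; Theorem~\ref{prop2}(a) gives the existence of $\bar g^*$ and the absence of a gap; and the chain val(\ref{enum4p})$=$val(\ref{e21}-\ref{e23})$=$val(\ref{e24}) is justified by Lemma~\ref{l4} and Theorem~\ref{t1}. The difference is in how you prove that $W^*_{\bar g}$ is optimal in (\ref{e27})-(\ref{e28}).

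The paper does not compare an arbitrary feasible $W$ with $W^*_{\bar g}$ directly. It takes the optimal deterministic stationary selector $\hat f$ from Corollary 9.17.2 of \cite{Bertsekas:1978}. It uses Lemma~\ref{l4} to see that $\mu_{\bar g}:=\mu^{\hat f}$ is a finite measure satisfying (\ref{e22}). It then sandwiches:
\begin{itemize}
\item val(\ref{e26})$\ge W^*_{\bar g}(x_0)-\sum_jg_jd_j$, because $W^*_{\bar g}$ satisfies (\ref{e28});
\item val(\ref{e32})$\le \int(\bar C_0+\sum_j g_j\bar C_j)\,d\mu_{\bar g}-\sum_jg_jd_j=W^*_{\bar g}(x_0)-\sum_jg_jd_j$;
\item weak duality gives val(\ref{e32})$\ge$val(\ref{e26}).
\end{itemize}

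Unpacked, that weak-duality step is exactly your inequality $W(x_0)\le\int(\bar C_0+\sum_jg_j\bar C_j)\,d\mu$. The paper obtains it by integrating (\ref{e28}) against a finite $\mu$ satisfying (\ref{e22}). Boundedness of $W$ and finiteness of $\mu$ make the $W$-terms cancel, so no limit is needed. Your pathwise telescoping, with the transversality limit $\PP^\pi_{x_0}(X_n\in{\bf V})\to 0$, is the trajectory version of the same estimate; the paper itself uses this argument in the proof of Lemma~\ref{l7}.

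Your route needs the limit argument, but it gives slightly more. It applies to every finite-cost strategy and every initial point, so it shows $W\le W^*_{\bar g}$ on all of ${\bf V}$ for every $W\in{\cal W}$ satisfying (\ref{e28}). In other words, $W^*_{\bar g}$ is the largest such function. The paper's route instead records, as a by-product, that (\ref{e26}) and (\ref{e32}) have equal values for each fixed $\bar g$. Incidentally, your bound ${\cal C}(1+\sum_jg_j)/\alpha$ is the right one. The paper's $(J+1){\cal C}/\alpha$ omits the weights $g_j$, which is harmless since only boundedness matters.

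One wording fix: do not restrict to $\pi\in\Pi$. By Definition~\ref{d3}, $\Pi$ consists of strategies that are feasible for the original constraints. The infimum of ${\cal V}_0+\sum_jg_j{\cal V}_j$ over that class can be strictly larger than $W^*_{\bar g}(x_0)$, which would break your final step. What you need, and what your extension of $W$ by zero on ${\bf V}^c$ actually uses, is only that $\pi$ applies $f^*$ on ${\bf V}^c$. Restricting to such strategies does not change $W^*_{\bar g}(x_0)$: switching to $f^*$ upon first entry into ${\bf V}^c$ makes all later costs zero and leaves earlier costs unchanged.
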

	
	\begin{lemma}\label{l7}
		Suppose Conditions  \ref{ConstrainedPPZcondition05} and \ref{con5} are  satisfied, the MDP model is positive and semicontinuous, and $\bar g\in\RR_+^J$ is arbitrarily fixed. Let a function $W(\cdot)\in{\cal W}$ be such that
		\begin{eqnarray}
			W(x) &=& \inf_{b\in{\bf B}}\left\{\bar C_0(x,b)+\sum_{j=1}^J g_j\bar C_j(x,b)+\int_{\bf V} W(y) Q(dy|x,b)\right\}\label{e29}\\
			&=& \bar C_0(x,f(x))+\sum_{j=1}^J g_j\bar C_j(x,f(x))+\int_{\bf V} W(y) Q(dy|x,f(x)),~~~x\in{\bf V}\nonumber
		\end{eqnarray}
		for a measurable mapping $f:~{\bf V}\to{\bf B}$.
		
		Then $W(\cdot)=W^*_{\bar g}(\cdot)$ is the Bellman function and the deterministic stationary strategy $f$ (supplemented by $f(x):=f^*(x)$ for $x\in{\bf V}^c$, as usual: see Definition \ref{d3}) is uniformly optimal in the sense that
		$${\cal V}_0(x,f)+\sum_{j=1}^J g_j{\cal V}_j(x,f)= W^*_{\bar g}(x),~~~x\in{\bf X}_\Delta.$$
	\end{lemma}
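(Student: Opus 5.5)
Write $c(x,b):=\bar C_0(x,b)+\sum_{j=1}^J g_j\bar C_j(x,b)$. The plan is a standard verification argument for discounted-type Bellman equations: show $W\le W^*_{\bar g}$ by iterating the inequality in (\ref{e29}) along an arbitrary strategy, and $W\ge$ the cost of $f$ by iterating the equality in (\ref{e29}) along $f$. The key tool making the iteration close is that $W$ is bounded and that, under Condition \ref{con5}, the probability of staying in $\bf X$ after many steps tends to zero along any strategy of finite cost. Throughout, extend $W$ by $W(x):=0$ on ${\bf V}^c$ (including $\Delta$), so that $\int_{\bf V}W(y)Q(dy|x,b)=\int_{{\bf X}_\Delta}W(y)Q(dy|x,b)$.

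\emph{Lower bound on costs.} Fix any strategy $\pi$ from the class described in Theorem \ref{t5}(a) (applying $f^*$ on ${\bf V}^c$) and $x\in{\bf V}$ with $\EE^\pi_x[\sum_i c(X_{i-1},B_i)]<\infty$; otherwise there is nothing to prove. By (\ref{e29}), on $\{X_{i-1}\in{\bf V}\}$ we have $W(X_{i-1})\le c(X_{i-1},B_i)+\EE^\pi_x[W(X_i)|H_{i-1},B_i]$. On $\{X_{i-1}\in{\bf V}^c\}$ the same inequality holds trivially because $W=0$ there and, by (\ref{e5}), $Q({\bf V}|X_{i-1},f^*(X_{i-1}))=0$. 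Summing over $i=1,\dots,n$ gives
$$W(x)\le \EE^\pi_x\Big[\sum_{i=1}^n c(X_{i-1},B_i)\Big]+\EE^\pi_x[W(X_n)].$$
Since $|W|\le K$, the last term is bounded by $K\,\PP^\pi_x(X_n\in{\bf V})$. By Condition \ref{con5}(a), each step taken from $\bf X$ with $\Theta_i<\infty$ costs at least $\delta e^{-\alpha\Theta_i}$, and this is exactly $\delta\,\PP(X_i\in{\bf X}\mid\cdot)$. Hence $\delta\sum_n \PP^\pi_x(X_n\in{\bf X})\le \EE^\pi_x[\sum_i \bar C_0]<\infty$ (this is essentially Lemma \ref{l4}), so $\PP^\pi_x(X_n\in{\bf V})\to0$. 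Letting $n\to\infty$ gives $W(x)\le \EE^\pi_x[\sum_i c]$, and taking the infimum yields $W\le W^*_{\bar g}$ on $\bf V$.

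\emph{Upper bound via $f$.} Along $f$ (extended by $f^*$ on ${\bf V}^c$), equality holds in each step, so
$$W(x)=\EE^f_x\Big[\sum_{i=1}^n c(X_{i-1},B_i)\Big]+\EE^f_x[W(X_n)].$$
Here we cannot yet assume that $f$ has finite cost. However, $W$ is bounded, so $\EE^f_x[\sum_{i\le n}c]\le W(x)+K$ for all $n$. Thus the total cost of $f$ is finite, and the argument above gives $\PP^f_x(X_n\in{\bf V})\to0$. Passing to the limit, $W(x)={\cal V}_0(x,f)+\sum_j g_j{\cal V}_j(x,f)\ge W^*_{\bar g}(x)$. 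Combined with the lower bound, this gives $W=W^*_{\bar g}$ on $\bf V$ and shows that $f$ is optimal there. On ${\bf V}^c$ both sides vanish by (\ref{e5}) and Theorem \ref{t5}(a).

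The main obstacle is controlling the tail term $\EE[W(X_n)]$, because $W$ may be negative and costs are discounted only through killing, not through a fixed factor. The resolution is the finiteness of the expected number of impulses, which follows from $C^I_0\ge\delta$. The step needing the most care is the $f$ direction: finiteness of the cost must be derived from the boundedness of $W$ rather than assumed. Condition \ref{con5}(b) is not needed beyond guaranteeing that the objects involved are well defined.
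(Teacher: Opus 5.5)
Your proof is correct and follows essentially the paper's verification argument: you telescope the Bellman inequality (and the equality along $f$) using boundedness of $W$, kill the tail term $\EE[W(X_n)]$ through Condition \ref{con5}(a), and obtain finiteness of the cost of $f$ from $\sup|W|<\infty$ rather than assuming it. The only variation is in the tail estimate. You make the survival probability vanish via $\delta\sum_n\PP^\pi_x(X_n\in{\bf X})\le{\cal V}_0(x,\pi)$, which is the Lemma \ref{l4} argument, whereas the paper shows $T_i\to\infty$ almost surely and uses $\PP^\pi_x(X_I\in{\bf X})=\EE^\pi_x[e^{-\alpha T_I}]$. Also, the two facts you borrow from Theorem \ref{t5}(a) follow directly from (\ref{e5}): that one may restrict to strategies using $f^*$ on ${\bf V}^c$, and that $W^*_{\bar g}=0$ there. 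This matters only because the Slater condition required by that theorem is not assumed in this lemma.
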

	
	\section{Inventory model}\label{sec7}
	Consider the following standard inventory model (Ch. 13.3 of \cite{taha}) with instantaneous order replenishment.
	$L$ is the maximal capacity which is finite, but large enough.
	Let
	\begin{itemize}
		\item $a\ge 0$ be the order quantity, number of units of product (action or impulse);
		\item $D>0$ be the demand rate, units per unit time;
		\item $K>0$ be the setup cost associated with the placement of an order, dollars per order;
		\item $H>0$ be the holding cost, dollars per inventory unit per unit time.
	\end{itemize}
	Suppose the profit of selling one unit equals one dollar. To make the model positive,
	we accept that the cost rate (excluding the holding expenses) is zero when the inventory level is positive, and equals $D$ when it is zero: the unsatisfied demand leads to the loss of one dollar per unit. In other words, $D$ is the shortage cost rate. The state of the system $x\ge 0$ equals the inventory level.
	
	The state space is ${\bf X}=[0,L]$ and the action space is ${\bf A}=[0,L]$. The flow is given by
	$$\phi(x,t)=(x-Dt)\cdot\II\{x-Dt\ge 0\},$$
	and $l(x,a)=\min\{x+a,L\}$. The single constraint at level $d>0$ comes from the holding cost, $\alpha>0$ is the discount factor as usual, and $x_0=0$ is the initial state. The main objective ${\cal V}_0(x_0,\pi)$ comes from  the setup and shortage costs.
	Therefore,
	$$\begin{array}{ll}C^g_0(x)=D\cdot\II\{x=0\}, &~~C^I_0(x,a)=K;\\
		C^g_1(x)=Hx, &~~C^I_1(x,a)=0;\end{array}$$
	and, for $x\in{\bf X}$,
	\begin{eqnarray*}
		\bar C_0(x,(\theta,a))&=&\left\{\begin{array}{ll}
			e^{-\alpha\theta}K, & \mbox{ if } \theta\le \frac{x}{D};\\
			\int_{x/D}^\theta e^{-\alpha t}D~dt+e^{-\alpha\theta}K=\frac{D}{\alpha}(e^{-\frac{\alpha x}{D}} -e^{-\alpha\theta})+e^{-\alpha\theta}K, & \mbox{ if } \theta\ge \frac{x}{D};\end{array}\right.\\
		\bar C_1(x,(\theta,a))&=& \int_0^{\min\{\theta,x/D\}} e^{-\alpha t} H\cdot(x-Dt)dt\\
		&=& \left\{\begin{array}{ll}
			\frac{Hx}{\alpha}(1-e^{-\alpha\theta})
			+HD\left[e^{-\alpha\theta}\left(\frac{\theta}{\alpha}+\frac{1}{\alpha^2}\right)-\frac{1}{\alpha^2}\right], & \mbox{ if } \theta\le\frac{x}{D};\\
			\frac{Hx}{\alpha}
			+\frac{HD}{\alpha^2}\left(e^{-\frac{\alpha x}{D}}-1\right), & \mbox{ if } \theta\ge\frac{x}{D};\end{array}\right.\\
		Q(dy|x,(\theta,a))&=&
		=\left\{\begin{array}{ll}
			e^{-\alpha\theta} \delta_{(\min\{x-D\theta+a,L\})}(dy)\\
			+(1-e^{-\alpha\theta})\delta_\Delta(dy), & \mbox{ if } \theta\le\frac{x}{D},~x\ne\Delta;\\
			e^{-\alpha\theta} \delta_{a}(dy)+(1-e^{-\alpha\theta})\delta_\Delta(dy), & \mbox{ if } \theta\ge\frac{x}{D},~x\ne\Delta;\\
			\delta_\Delta(dy), & \mbox{ if } x=\Delta \mbox{ (or $\theta=\infty$)}.
		\end{array}\right.
	\end{eqnarray*}
	
	Note that the cost rate $C^g_0(\cdot)$ is {\sl upper} semicontinuous, but the cost function $\bar C_0(x,(\theta,a))$ is obviously continuous, and the MDP under study is positive and semicontinuous. Conditions  \ref{con1}, \ref{ConstrainedPPZcondition05},  \ref{con5} and the Slater condition \ref{con4}  are obviously  satisfied: for the deterministic stationary  strategy $f^0(x):=(\infty,\hat a)$ with an arbitrary $\hat a\in{\bf A}$, ${\cal V}_0(x_0,f^0)=\frac{D}{\alpha}$ and ${\cal V}_1(x_0,f^0)=0<d$ because $\bar C_0(x_0,(\infty,\hat a))=\frac{D}{\alpha}$ and $\bar C_1(x_0,(\infty,\hat a))=0$. Recall that $x_0=0$. Since $J=1$, we omit the bar and write $g$ and $g^*$ for $\bar g$ and $\bar g^*$. Finally, according to the comments below Condition  \ref{con5}, ${\bf V}={\bf X}$. Actions in the absorbing costless state $\Delta$ are fixed arbitrarily.
	
We assume that the constant $L$, the capacity of the storage, is  large enough. The minimal value can be calculated, using the other parameters, as follows.

For each $g>0$, let $a_g>0$ be the unique positive solution to equation 
		\begin{equation}\label{eqn1}
	\frac{\alpha K}{D}+\frac{gH}{\alpha}+\frac{gH}{D} a=\frac{gH}{\alpha} e^{\frac{\alpha a}{D}}.
\end{equation}

	\begin{lemma}\label{l9} Suppose $g>0$ and  $a_g$ comes from equation (\ref{eqn1}). Then the following assertions hold.
	\begin{itemize}
		\item[(a)] The function $a_g$ is differentiable and decreases with $g$.
		\item[(b)] For each $a>0$ there is a unique $g>0$ such that $a=a_g$, so that positive $g$ and $a_g$ are in 1-1 correspondence, and $\lim_{g\downarrow 0} a_g=\infty$, $\lim_{g\to\infty} a_g=0$.
		\item[(c)] The product $ga_g$ increases with $g$ from zero (when $g\downarrow 0$) to $\infty$ (when $g\to\infty$).
	\end{itemize}
\end{lemma}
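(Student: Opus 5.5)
The plan is to rewrite equation (\ref{eqn1}) in the normalized variable $z:=\frac{\alpha a}{D}>0$. Multiplying (\ref{eqn1}) by $\frac{\alpha}{gH}$ gives
$$\frac{\alpha^2K}{gHD}=e^{z}-1-z=:F(z).$$
The function $F$ satisfies $F(0)=0$ and $F'(z)=e^z-1>0$ for $z>0$, with $F(z)\to\infty$ as $z\to\infty$. Hence $F$ is a strictly increasing smooth bijection from $(0,\infty)$ onto $(0,\infty)$. Writing $c:=\frac{\alpha^2K}{HD}>0$, we get for each $g>0$ a unique positive root
$$z_g=F^{-1}(c/g),\qquad a_g=\frac{D}{\alpha}F^{-1}(c/g).$$
This settles existence and uniqueness of $a_g$, which the statement presupposes.

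For (a), I will use that $F'(z)>0$ on $(0,\infty)$, so $F^{-1}$ is differentiable there by the inverse function theorem. Then $a_g$ is a composition of differentiable maps and is differentiable in $g$. Since $g\mapsto c/g$ is decreasing and $F^{-1}$ is increasing, $a_g$ is strictly decreasing. For (b), given $a>0$ the relation $g=c/F(\alpha a/D)$ defines the unique $g>0$ with $a=a_g$. The limits follow because $c/g\to\infty$ as $g\downarrow 0$ and $c/g\to 0$ as $g\to\infty$, and $F^{-1}$ maps $(0,\infty)$ onto $(0,\infty)$ monotonically, with $F^{-1}(s)\to 0$ as $s\downarrow 0$ and $F^{-1}(s)\to\infty$ as $s\to\infty$.

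For (c), I will parametrize by $z=z_g$ instead of $g$. From $g=c/F(z)$,
$$g a_g=\frac{cD}{\alpha}\,\frac{z}{F(z)}=:\frac{cD}{\alpha}\psi(z).$$
By part (a), $z$ decreases as $g$ increases. So it suffices to show that $\psi(z)=z/(e^z-1-z)$ is strictly decreasing on $(0,\infty)$, tends to $+\infty$ as $z\downarrow 0$, and tends to $0$ as $z\to\infty$. The limits are immediate from $F(z)\sim z^2/2$ near $0$ and exponential growth at infinity.

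The main step is monotonicity of $\psi$. Its derivative has the sign of $F(z)-zF'(z)=e^z-1-z-z(e^z-1)=e^z(1-z)-1$. Set $k(z):=e^z(1-z)-1$. Then $k(0)=0$ and $k'(z)=-ze^z<0$ for $z>0$, so $k(z)<0$ on $(0,\infty)$ and $\psi$ is strictly decreasing. Combining this with the fact that $z_g$ decreases in $g$ (from $\infty$ to $0$) shows that $ga_g$ increases from $0$ as $g\downarrow 0$ to $\infty$ as $g\to\infty$. I do not expect any genuine obstacle; the only care needed is in correctly orienting the limits through the change of variables.
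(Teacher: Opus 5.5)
Your proposal is correct and is essentially the paper's argument written in the normalized variable $z=\alpha a/D$: the paper's relation $g=\alpha^2K/(HD[e^{z}-1-z])$ is your $g=c/F(z)$, and the numerator $ae^{z}-\frac{D}{\alpha}e^{z}+\frac{D}{\alpha}$ of its $d(ga_g)/dg$ equals $-\frac{D}{\alpha}k(z)$, which the paper shows is positive via the same derivative $ze^{z}>0$. The only cosmetic difference is that you invert $F$ explicitly instead of differentiating implicitly in $g$, which also makes explicit the existence and uniqueness of $a_g$ that the paper calls obvious.
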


According to Lemma \ref{l9}(c), there exists the unique value $g_c>0$ such that 
\begin{equation}\label{e11}
\alpha K+Hg_c a_{g_c}=D.
\end{equation}

Let $a^*$ be the unique positive solution to equation
	\begin{equation}\label{e15}
	\frac{Ha}{\alpha}\cdot \frac{e^{\alpha\frac{a}{D}}}{e^{\alpha\frac{a}{D}}-1}-\frac{DH}{\alpha^2}-d=0,
\end{equation}
so that $a^*=a_{\hat g}$, where, by (\ref{eqn1}),
	$$\hat g=\frac{\alpha^2 K}{DH e^{\alpha\frac{a^*}{D}}-DH-\alpha Ha^*}>0.$$
The strict inequality holds because positive $g$ and $a_g$ are in 1-1 correspondence.
The solvability of equation (\ref{e15}) will be established during the proof of Theorem \ref{t8}.
	
Now it is sufficient to  require that
$$L>\max\{a_{g_c},a^*\}.$$

Since the function $a_g$ is continuous (see Lemma \ref{l9}(a)), there is $\varepsilon>0$ such that
$$\hat g-\varepsilon>0,~g_c-\varepsilon>0~\mbox{ and } a_{\hat g-\varepsilon}<L,~a_{g_c-\varepsilon}<L,$$
and we fix
$g_{min}:=\min\{\hat g,g_c\}-\varepsilon>0$.
It will be clear  that the values of $g$ smaller than $g_{min}$ are of no interest.
	
	\begin{lemma}\label{l10}
		Suppose $\alpha K\ge D$. Then at $g=0$
		$$W^*_0(x):=\frac{D}{\alpha}e^{-\frac{\alpha x}{D}}=\inf_\pi{\cal V}_0(x,\pi),~~x\in{\bf X},$$
		and $f^0(x)\equiv\left(\infty,\hat a\right)$ for all $x\in {\bf X}$, with an arbitrary $\hat a\in{\bf A}$,
		is the  single (up to the value of $\hat a$) uniformly optimal deterministic stationary strategy in the problem ${\cal V}_0(x,\pi)\to\inf_\pi$, i.e.,
		${\cal V}_0(x,f^0)=W^*_0(x)=\inf_\pi {\cal V}_0(x,\pi),~x\in{\bf X}_\Delta$.
		In words, one should never place any orders (because the setup cost is too large). Of course, as usual, $W^*_0(\Delta)=0$.
	\end{lemma}
	
	In the framework of Lemma \ref{l10}, the deterministic stationary strategy$f^0(x)=(\infty,\hat a)$ is obviously optimal also in the problem ${\cal V}_1(x,\pi)\to\inf_\pi$ because no new orders are placed and the holding cost is minimal possible. Thus, $f^0$ is the solution to the constrained problem (\ref{PZZeqn02}) for any $d\ge 0$: recall that $x_0=0$ and ${\cal V}_1(0,f^0)=0$.
	
	In what follows, we assume that $\alpha K<D$.
	
	\begin{theorem}\label{t6} Assume that $\alpha K<D$, $g\ge g_{min}$ is fixed and $a_g$ comes from equation (\ref{eqn1}). Note, $L>a_g$ by Lemma \ref{l9}(a).
		
	 Then the following assertions hold.
		\begin{itemize}
			\item[(a)] If
			\begin{equation}\label{eqn2}
				\alpha K+Hga_g<D,
			\end{equation}
			then the Bellman function (\ref{e31}) has the form
			\begin{equation}\label{eqn3}
				W^*_g(x)=\frac{xgH}{\alpha}-\frac{DgH}{\alpha^2}+\frac{DgH}{\alpha^2}e^{\alpha\frac{a_g-x}{D}},~~x\in{\bf X},
			\end{equation}
			and $f(x)=\left(\frac{x}{D},a_g\right)$, $x\in {\bf X}$, is the single uniformly optimal deterministic stationary strategy, i.e.,
			\begin{equation}\label{e34}
				{\cal V}_0(x,f)+g{\cal V}_1(x,f)=W^*_g(x),~~~~~x\in{\bf X}.
			\end{equation}
			In words, one has to wait until the inventory  reaches zero level, and order $a_g$ units.
			\item[(b)] If
			\begin{equation}\label{eqn4}
				\alpha K+Hga_g>D,
			\end{equation}
			then  the Bellman function (\ref{e31}) has the form
			\begin{equation}\label{eqn5}
				W^*_g(x)=\frac{xgH}{\alpha}-\frac{DgH}{\alpha^2}+\left(\frac{DgH}{\alpha^2}+\frac{D}{\alpha}\right)e^{-\frac{\alpha x}{D}},~~x\in{\bf X},
			\end{equation}
			and $f(x)=\left(\infty,\hat a\right)$, $x\in {\bf X}$, with an arbitrary $\hat a\in{\bf A}$
			is the single (up to the value of $\hat a$) uniformly optimal deterministic stationary strategy in the sense of (\ref{e34}).
			In words, one should never place any orders because the setup and  holding costs are too large as well as the Lagrange multiplier $g$.
			\item[(c)] If $\alpha K+Hga_g=D$, then functions (\ref{eqn3}) and (\ref{eqn5}) coincide, and all the deterministic stationary strategies of the form $f(x)=\left(\theta\ge \frac{x}{D},a_g\right)$, $x\in {\bf X}$ are uniformly optimal in the sense of (\ref{e34}).
		\end{itemize}
	\end{theorem}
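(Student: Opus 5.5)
The natural route is Lemma \ref{l7}: in each of the cases (a), (b), (c), exhibit the candidate function $W$ (formula (\ref{eqn3}) or (\ref{eqn5})) together with the candidate selector $f$. We then check that $W$ is bounded on ${\bf X}=[0,L]$ and satisfies the Bellman equation (\ref{e29}) with the infimum attained at $f(x)$. Boundedness is immediate, since both formulas are continuous on the compact interval. Lemma \ref{l7} then gives $W=W^*_g$ and the uniform optimality (\ref{e34}).

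To verify (\ref{e29}), fix $x$ and compute the bracket
$$F(x,\theta,a):=\bar C_0(x,(\theta,a))+g\bar C_1(x,(\theta,a))+e^{-\alpha\theta}W(y(x,\theta,a)).$$
The explicit formulas for $\bar C_0,\bar C_1,Q$ of Section \ref{sec7} are available, and we set $W(\Delta)=0$. The minimisation splits into two regimes.

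In the regime $\theta\ge x/D$ the post-impulse state is $\min\{a,L\}$. For fixed $\theta$, minimising over $a$ means minimising $W(a)$ (or $K+W(a)$ inside the factor $e^{-\alpha\theta}$). For (\ref{eqn3}), $W'(a)=\frac{gH}{\alpha}(1-e^{\alpha(a_g-a)/D})$, so the minimum is at $a=a_g<L$. Using (\ref{eqn1}), one finds $K+W(a_g)=K+\frac{gHa_g}{\alpha}$. After that, the dependence on $\theta$ reduces to $e^{-\alpha\theta}\bigl(K+\frac{gHa_g}{\alpha}-\frac{D}{\alpha}\bigr)$ plus terms independent of $\theta$. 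This expression is increasing, decreasing, or constant in $\theta$ exactly according to the sign of $\alpha K+Hga_g-D$. This yields $\theta=x/D$ in case (a) (condition (\ref{eqn2})), $\theta=\infty$ in case (b) (condition (\ref{eqn4})), and any $\theta\ge x/D$ in case (c). For case (b) one must also check that with $W$ from (\ref{eqn5}), the term $K+W(a)\ge K+\frac{gHa}{\alpha}+\ldots$ still makes ordering unprofitable. I would do this by comparing $\min_a\{K+W(a)\}$ with $D/\alpha$, using monotonicity of $a\mapsto$ the relevant expression together with (\ref{eqn1}) and (\ref{eqn4}). This is also where $g\ge g_{min}$ and $L>a_g$ enter.

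In the regime $\theta\le x/D$, ordering before the stock is exhausted lands at $\min\{x-D\theta+a,L\}$. One shows that $F$ is nonincreasing in $\theta$ on $[0,x/D]$ after optimising over $a$, so the boundary $\theta=x/D$ dominates. I would substitute $z=x-D\theta$ and differentiate in $\theta$. The holding cost paid until time $\theta$, combined with the convexity of $W$, should give a derivative sign controlled by $W'(z)\le$ const. This is the step I expect to be the main obstacle: it requires a careful comparison of $e^{-\alpha\theta}\min_a W(z+a)$ against the continuation value.

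Finally, the identity $W(x)=F(x,f(x))$ is a direct evaluation, i.e. routine algebra using (\ref{eqn1}), and it confirms the closed forms. Uniqueness of the optimal stationary strategy follows because the minimisers in both regimes are unique (strict monotonicity in $\theta$, and strict convexity of $W$ in $a$) except in the degenerate case (c) and for the irrelevant $\hat a$ when $\theta=\infty$. Any other deterministic stationary $f'$ that is uniformly optimal must attain the infimum in (\ref{e29}) at every $x$, which is the standard consequence of Lemma \ref{l7} applied to $W^*_g$.
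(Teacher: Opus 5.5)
Your skeleton matches the paper's: apply Lemma \ref{l7}, then verify (\ref{e29}) separately for $\theta\ge x/D$ and $\theta<x/D$. The conclusions you announce about where the minimum lies are also correct. However, the proposal stops at the two inequalities that make up the substance of the proof, and for one of them the mechanism you suggest would not give the sign.

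\emph{Case (b), regime $\theta\ge x/D$.} Your reduction to $e^{-\alpha\theta}\bigl(K+\frac{gHa_g}{\alpha}-\frac{D}{\alpha}\bigr)$ uses $\min_a W(a)=\frac{gHa_g}{\alpha}$, which is true only for (\ref{eqn3}). For (\ref{eqn5}) the minimiser is $\hat a_g=\frac{D}{\alpha}\ln\bigl(1+\frac{\alpha}{gH}\bigr)$, with $\min W=\frac{gH\hat a_g}{\alpha}$. By (\ref{eqn1}), condition (\ref{eqn4}) is equivalent to $a_g>\hat a_g$. So (\ref{eqn4}) does not directly give what you need, namely $\alpha K+gH\hat a_g\ge D$. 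The paper proves this by showing that $g\mapsto g\hat a_g$ is increasing and that $\hat a_g=a_g$ at the threshold value of $g$ where $\alpha K+Hga_g=D$. A fixed-$g$ argument in the spirit of your hint also works. Set $F(a):=\frac{DgH}{\alpha^2}(e^{\alpha a/D}-1)-\frac{gHa}{\alpha}$; it is increasing, $F(a_g)=K$ by (\ref{eqn1}), $F(\hat a_g)=\frac{D}{\alpha}-\frac{gH\hat a_g}{\alpha}$, and $a_g>\hat a_g$. Your proposal supplies neither argument.

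\emph{Regime $\theta<x/D$, which you leave open.} Convexity of $W$, or an upper bound on $W'$, does not control the sign of the $\theta$-derivative. The missing observation is that both (\ref{eqn3}) and (\ref{eqn5}) solve $DW'+\alpha W=gHx$. Hence at the minimiser $a^\circ$ of $W$ (namely $a^\circ=a_g$, resp.\ $\hat a_g$, both $<L$) one has $W(a^\circ)=gHa^\circ/\alpha$. Now split according to $z=x-D\theta$.
\begin{itemize}
\item If $z\le a^\circ$, the optimal post-order level is $a^\circ$, and the derivative of the bracket is $e^{-\alpha\theta}[-\alpha K+gH(z-a^\circ)]<0$. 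So the bracket exceeds its value at $\theta=x/D$. That value is $W(x)$ in (a) and (c), and $W(x)+e^{-\alpha x/D}\bigl(K-\frac{D}{\alpha}+\frac{gH\hat a_g}{\alpha}\bigr)\ge W(x)$ in (b), again by the previous inequality.
\item If $z\ge a^\circ$, then $a=0$ is optimal, and the ODE makes the bracket collapse exactly to $e^{-\alpha\theta}K+W(x)>W(x)$.
\end{itemize}

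With these two computations supplied, the rest of your outline goes through as in the paper: boundedness, the evaluation at $f(x)$ via (\ref{eqn1}), and uniqueness from strict minimisers. One correction on uniqueness: the fact that a uniformly optimal $f'$ attains the infimum in (\ref{e29}) comes from the one-step decomposition of ${\cal V}_0(x,f')+g{\cal V}_1(x,f')$. It does not follow from Lemma \ref{l7}, which is only the sufficiency direction.
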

	
All the statements of Theorem \ref{t6} remain valid if $g>0$ is arbitrary and $L\ge a_g$.
	
	Suppose $g>0$ is fixed at a small level: $\sqrt{2KDHg}<D$, and compute $\lim_{\alpha\downarrow 0} a_g$. To do so, one can take three terms in the Taylor expansion of $e^{\frac{\alpha a}{D}}=1+\frac{\alpha a}{D}+\frac{\alpha^2 a^2}{2D^2}+\ldots$. Now equation (\ref{eqn1}) takes the form
	$$\alpha^2 K+DgH+\alpha gH a=DgH(1+\frac{\alpha a}{D}+\frac{\alpha^2 a^2}{2D^2}+\ldots)$$
	leading to 
	$$\breve a:=\lim_{\alpha\downarrow 0} a_g=\sqrt{\frac{2KD}{gH}}.$$
	The obtained expression is consistent with the standard formula for Economic Order Quantity (EOQ), see p.508 of \cite{taha}. Note that the product $gH$ plays the role of the holding cost denoted just as $h$ in \cite{taha}. Since $Hg\breve a=\sqrt{2KDHg}$, $\alpha K+Hga_g<D$ for all small enough $\alpha>0$, so that we are in the framework of Theorem \ref{t6}(a). The deterministic stationary strategy $f(x)=(\frac{x}{D},a_g)$ is uniformly optimal  for all small $\alpha>0$ by Theorem \ref{t6}(a), and $a_g$ approaches EOQ when $\alpha\downarrow 0$. In Subsection 13.3.1 of \cite{taha}, the optimal control problem was undiscounted, and the objective was the long-run average setup and holding cost, called Total Cost per Unit time (TCU).  Its minimal value is $gH\breve a=\sqrt{2KDHg}$. Note that the shortage cost was not considered in Subsection 13.3.1 of \cite{taha}.
	If we accept that $D$ is the cost rate associated with unsatisfied demand, then one has to compare $\sqrt{2KDHg}$ and $D$: in case $\sqrt{2KDHg}>D$, it is optimal not to place any orders because the setup cost $K$ and the holding cost $gH$ are too large. This was the reason to introduce inequality $\sqrt{2KDHg}<D$ when looking at the limit when $\alpha\downarrow 0$ and comparing the results with \cite{taha}.
	
		Let us  introduce the {\sl critical} value of the constraint $d$: 
		$$d_c:=\frac{Ha_{g_c}e^{\alpha\frac{a_{g_c}}{D}}}{\alpha(e^{\alpha\frac{a_{g_c}}{D}}-1)}-\frac{DH}{\alpha^2}=\frac{\frac{Ha_{g_c}}{\alpha}+\frac{HD}{\alpha^2}\left(e^{-\alpha\frac{a_{g_c}}{D}}-1\right)}{1-e^{-\alpha\frac{a_{g_c}}{D}}}>0.
		$$
		For the strict inequality, see the expression for $\bar C_1(x,(\theta,a))$ at $x=a_{g_c}$ and $\theta\ge \frac{a_{g_c}}{D}$.
		Recall that $g_c$, below called the {\sl critical value} of the Lagrange multiplier, comes from equation (\ref{e11}).

	\begin{theorem}\label{t8} Assume that $\alpha K<D$.
		\begin{itemize}
			\item[(a)] If $d\le d_c$, then $g^*=g_c$ and the optimal (and feasible) solution to problem (\ref{PZZeqn02})  is given by the deterministic stationary strategy $f^*(x):=(\theta^*(x)=\frac{x}{D}+\tau^*,a_{g_c})$, where
			\begin{eqnarray*}
				\tau^*&:=&\frac{1}{\alpha}\ln\left[ \frac{Ha_{g_c}}{\alpha d}+\frac{HD}{\alpha^2 d}\left( e^{-\alpha\frac{a_{g_c}}{D}}-1\right)+e^{-\alpha\frac{ a_{g_c}}{D}}\right]\\
				&=&\frac{1}{\alpha}\ln\left[\left(1-e^{-\alpha\frac{a_{g_c}}{D}}\right)\frac{d_c}{d}+e^{-\alpha\frac{ a_{g_c}}{D}}\right].
			\end{eqnarray*}
			Moreover, ${\cal V}_1(0,f^*)=d$ and ${\cal V}_0(0,f^*)=\frac{D}{\alpha}-g^*d$.
			Note that $\tau^*\ge 0$ when $d\le d_c$. 
			
			In words, at the very beginning (when $x_0=0$) and as soon as the inventory level reaches zero, one has to wait for $\tau^*$ time units and order $a_{g_c}$ units of product: see Figure \ref{f1}.
			\item[(b)] If $d>d_c$, then
			$$g^*=\hat g=\frac{\alpha^2 K}{DH e^{\alpha\frac{a^*}{D}}-DH-\alpha Ha^*}<g_c,$$
			where $a^*=a_{g^*}$ is the unique  positive solution to equation (\ref{e15}).
			The deterministic stationary strategy $f^*(x):=(\theta^*(x)=\frac{x}{D},a^*)$ is the optimal (and feasible) solution to problem (\ref{PZZeqn02}).
			
			Moreover, ${\cal V}_1(0,f^*)=d$ and ${\cal V}_0(0,f^*)=\frac{Dg^*H}{\alpha^2}\left(e^{\alpha\frac{a^*}{D}}-1\right)-g^*d$.
			
			In words, at the very beginning (when $x_0=0$) and as soon as the inventory level reaches zero, one has to  order immediately $a^*$ units of product.
		\end{itemize}
	\end{theorem}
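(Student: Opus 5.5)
The plan is to use the saddle-point characterization of Theorem \ref{prop2}(b-ii) together with Remark \ref{rem3}. For a candidate multiplier $g^*$ and a candidate strategy $f^*$ we must check three things:
\begin{itemize}
\item[(1)] $f^*$ attains the infimum in $h(g^*)$, i.e.\ ${\cal V}_0(0,f^*)+g^*{\cal V}_1(0,f^*)=W^*_{g^*}(0)$;
\item[(2)] ${\cal V}_1(0,f^*)\le d$;
\item[(3)] complementary slackness holds, which is automatic once ${\cal V}_1(0,f^*)=d$ with $g^*>0$.
\end{itemize}
Then $\mu^{f^*}$ is optimal in (\ref{enum4p}), and by Theorem \ref{t1}(c) the strategy $f^*$ is optimal in (\ref{PZZeqn02}). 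Note that $\pi^{\mu^{f^*}}=f^*$ on $\bf V$, since $f^*$ is deterministic stationary. The Bellman functions come from Theorem \ref{t6}, and the values of deterministic stationary strategies of the form $(x/D+\tau,a)$ started from $x_0=0$ can be computed in closed form. Indeed, the process then renews at state $a$ after each cycle, so each objective is a geometric series.

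\textbf{Case (a), $d\le d_c$.} Take $g^*=g_c$. By Theorem \ref{t6}(c), every strategy $(\theta\ge x/D,\ a_{g_c})$ is uniformly optimal for the Lagrangian with $g=g_c$. In particular $f^*$ with any $\tau\ge0$ satisfies (1), so it only remains to tune $\tau$ to get ${\cal V}_1(0,f^*)=d$.

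Let $x=a_{g_c}$. Starting from $a$, one cycle has length $a/D+\tau$, incurs the holding cost $c_1(a)=\frac{Ha}{\alpha}+\frac{HD}{\alpha^2}(e^{-\alpha a/D}-1)$ (see $\bar C_1$ with $\theta\ge x/D$), and then renews at $a$ with discount $e^{-\alpha(a/D+\tau)}$. The initial step from $0$ costs $0$ in holding and has discount $e^{-\alpha\tau}$. Hence
$${\cal V}_1(0,f^*)=e^{-\alpha\tau}\frac{c_1(a)}{1-e^{-\alpha(a/D+\tau)}}.$$
Setting this equal to $d$ and solving for $e^{\alpha\tau}$ gives exactly the stated formula for $\tau^*$. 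Also $\tau^*\ge0$ holds iff $d\le c_1(a)/(1-e^{-\alpha a/D})=d_c$.

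Then ${\cal V}_0(0,f^*)=W^*_{g_c}(0)-g_cd$, and by (\ref{eqn5}) we have $W^*_{g_c}(0)=D/\alpha$, which gives the stated value.

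\textbf{Case (b), $d>d_c$.} Consider $\psi(a):=\frac{Ha}{\alpha}\frac{e^{\alpha a/D}}{e^{\alpha a/D}-1}-\frac{DH}{\alpha^2}$. This is exactly ${\cal V}_1(0,\cdot)$ of the strategy $(x/D,a)$ (the case $\tau=0$ above). We must show that $\psi$ is continuous and strictly increasing, with $\psi(0+)=0$ and $\psi(\infty)=\infty$. Then (\ref{e15}) has a unique root $a^*$. Since $\psi(a_{g_c})=d_c<d$, we get $a^*>a_{g_c}$, and therefore $\hat g<g_c$ by Lemma \ref{l9}(a).

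Monotonicity of $\psi$ reduces to the sign of the derivative of $a e^{u}/(e^{u}-1)$ with $u=\alpha a/D$. This derivative is proportional to $e^u-1-u>0$, which is elementary.

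Next, with $g^*=\hat g$ we have $a_{g^*}=a^*$. Lemma \ref{l9}(c) together with $\hat g<g_c$ gives $\alpha K+H\hat g a_{\hat g}<D$, so Theorem \ref{t6}(a) applies. Its hypothesis $\hat g\ge g_{min}$ holds by construction, and it shows that $(x/D,a^*)$ attains $W^*_{\hat g}$, which is (1). The constraint (2) holds with equality by the choice of $a^*$.

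Finally,
$${\cal V}_0(0,f^*)=W^*_{\hat g}(0)-\hat g d,\qquad W^*_{\hat g}(0)=\frac{D\hat gH}{\alpha^2}\bigl(e^{\alpha a^*/D}-1\bigr)$$
by (\ref{eqn3}). The explicit formula for $\hat g$ is just (\ref{eqn1}) solved for $g$ at $a=a^*$.

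\textbf{Main obstacle.} The likely sticking point is the bookkeeping of ${\cal V}_1(0,f^*)$ via the renewal (geometric) series, including the first, costless wait of length $\tau$ at $x_0=0$, together with matching it to the two expressions for $d_c$ and $\tau^*$. A second point needing care is verifying that Theorem \ref{t6} is applicable, i.e.\ that $L$ exceeds $a_{g_c}$ and $a^*$; this is ensured by the standing assumption on $L$.
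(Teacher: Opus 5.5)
Your proof is correct, but it reaches $g^*$ by a different route from the paper.

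\textbf{The paper's route.} The paper first maximizes the concave dual functional $h(g)=W^*_g(0)-gd$ by calculus. Using (\ref{e43}) it computes $h'(g)=-d$ for $g>g_c$, and
\[
h'(g)=\frac{Ha_g}{\alpha}\cdot\frac{e^{\alpha a_g/D}}{e^{\alpha a_g/D}-1}-\frac{DH}{\alpha^2}-d \quad\text{for } g_{min}\le g<g_c .
\]
It then uses the monotonicity of $u(z)=z\ln z/(z-1)$ to locate the maximizer: $g_c$ when $d\le d_c$, and the root of $h'(g)=0$, i.e.\ of (\ref{e15}), when $d>d_c$. Only afterwards does it check ${\cal V}_1(0,f^*)=d$ and invoke Theorem \ref{prop2}(b-ii) and Theorem \ref{t1}(c), exactly as you do.

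\textbf{Your route.} You skip the dual maximization and verify the sufficiency conditions (ii) directly for the guessed pair. This uses only the elementary weak-duality direction, so it needs no differentiability of $h$ or of $a_g$. It also never has to deal with $g<g_{min}$.

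You also recognize the left side of (\ref{e15}) as $\psi(a)={\cal V}_1(0,(x/D,a))$, the holding cost of the cycle strategy. As a result, the hypothesis $d\le d_c$ enters only through $\tau^*\ge 0$, and (\ref{e15}) becomes simply ``the constraint is active.'' This explains, via the envelope theorem, why the paper's derivative satisfies $h'(g)={\cal V}_1(0,f_g)-d$ on both sides of $g_c$.

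\textbf{What the paper's route gives in return.} Because $h'$ changes sign strictly at $g_c$ (resp.\ $\hat g$), the paper shows this point is the \emph{unique} maximizer of $h$. Your argument shows only that $g_c$ (resp.\ $\hat g$) is \emph{a} dual-optimal multiplier. That suffices for the optimality and feasibility of $f^*$ and for the stated values of ${\cal V}_0$ and ${\cal V}_1$. It does not suffice if ``$g^*=g_c$'' is read as identifying the dual solution uniquely.

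\textbf{Wording point.} The induced strategy is determined only $\mu^{f^*}(dx\times{\bf B})$-a.e., i.e.\ on $\{0,a\}$. So rather than saying it equals $f^*$ on ${\bf V}$, say that $f^*$ is one version of $\pi^{\mu^{f^*}}$. Alternatively, conclude directly from ${\cal V}_j(0,f^*)=\int\bar C_j\,d\mu^{f^*}$ together with Theorem \ref{t1}(a).
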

	
	\begin{figure}[htbp]
		\begin{center}
			\includegraphics[width=8cm]{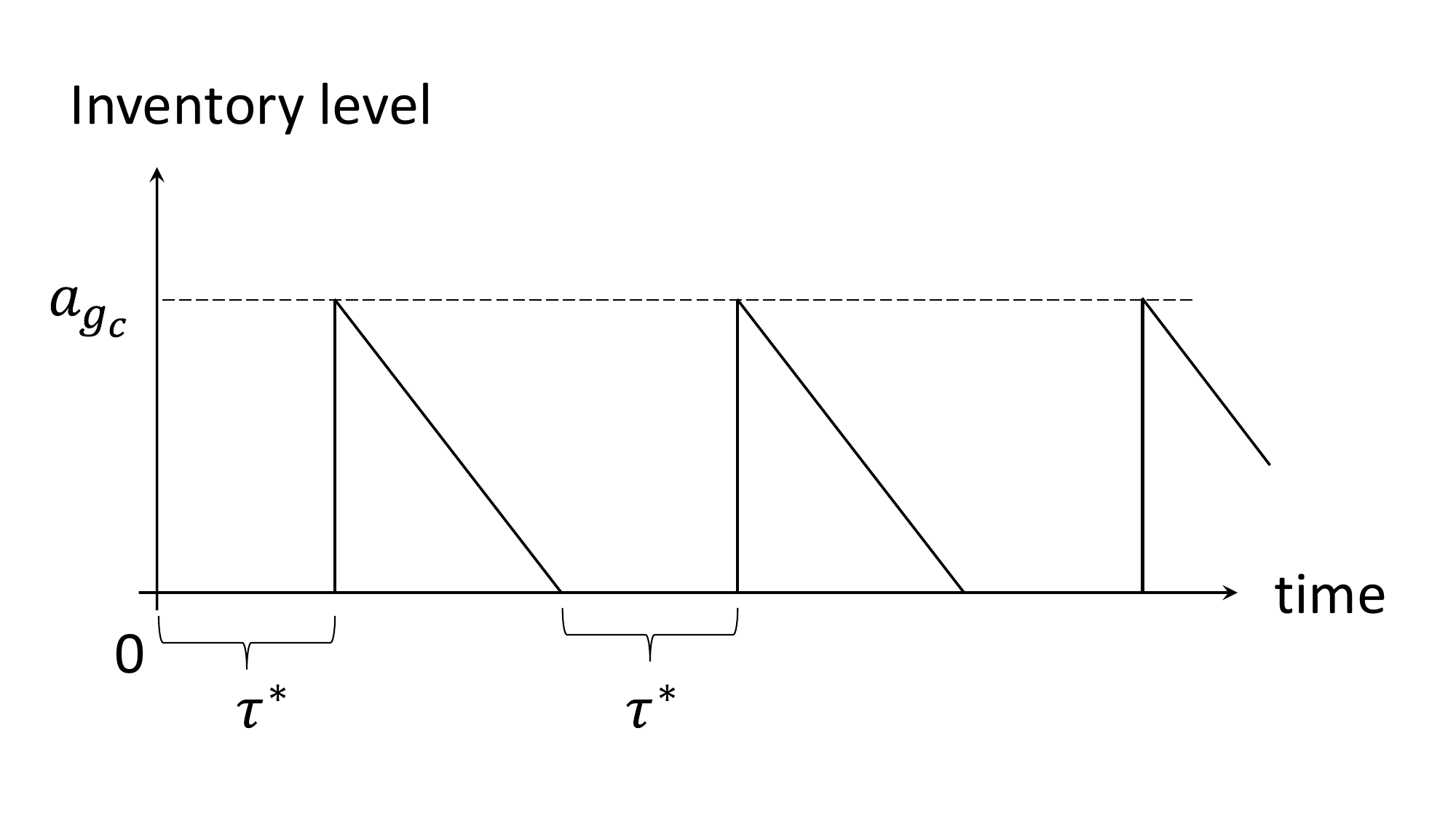}
			\caption{Optimal inventory strategy: Theorem \ref{t8}(a). }\label{f1}
		\end{center}
	\end{figure}
	
	\section{Conclusion}\label{conc}
	The MDP approach to
	the constrained optimal impulse control problem with discounting is developed. Using the linear programming approach, two pairs of convex programs in the space of occupation measures are studied in depth. Under appropriate conditions, all the programs have optimal solutions, and necessary and sufficient conditions of optimality are presented.
	
	The theoretical results are illustrated by the standard inventory model. Since the model is with the constraint on the holding cost, in some cases it is optimal to keep for a while the stock at zero level before ordering the product.

	\section*{Appendix}\label{secap}
	\underline{Proof of Proposition \ref{pr1}.} $\bf B$ is compact (in the product topology) as the product of compacts.
	
	If $u:~{\bf X}_\Delta\mapsto\RR$ is a bounded continuous function, then
	$$\int_{{\bf X}_\Delta} u(y)Q(dy|x,b=(\theta,a))=\left\{\begin{array}{ll}
		e^{-\alpha\theta}u(l(\phi(x,\theta),a))\\
		+(1-e^{-\alpha\theta})u(\Delta) & \mbox{ if } x\ne\Delta,~\theta\ne+\infty;\\
		u(\Delta) & \mbox{ otherwise}\end{array}\right.$$
	is a continuous function on ${\bf X}_\Delta\times{\bf B}$. Thus $Q$ is a continuous stochastic kernel.
	
	Let us show that the cost functions $\bar C_j(\cdot,\cdot)$ are lower semicontinuous.
	The case of $x=\Delta$, when $\bar C_j(x,b)=0$, is trivial for the isolated point $\Delta$.
	A non-negative function is lower semicontinuous if and only if it is the limit of an increasing sequence of bounded non-negative continuous functions \cite[Lemma 7.14]{Bertsekas:1978}. Therefore, $C^{g,m}_j(\cdot)\uparrow C^g_j(\cdot)$ as $m\to\infty$ for non-negative, bounded and continuous functions $C^{g,m}_j(\cdot)$, because the functions $C^j_j(\cdot)$ are non-negative and lower semicontinuous. Using the monotone convergence theorem, we see that, for each $j=0,1,\ldots,J$, the function
	$${\bf X}\times{\bf B}\ni(x,b=(\theta,a)) \to  \int_0^\theta e^{-\alpha t}  C^g_j(\phi(x,t))dt
	= \lim_{m\to\infty}  \int_0^\theta e^{-\alpha t}  C^{g,m}_j(\phi(x,t))dt$$
	is also non-negative and lower semicontinuous.  The function $e^{-\alpha\theta} C^I_j(\phi(x,\theta),a)$ is lower semicontinuous on ${\bf X}\times[0,\infty)\times{\bf A}$ and
	$$\liminf_{n\to\infty} e^{-\alpha\theta_n}C^I_j(\phi(x_n,\theta_n),a_n)\ge 0=e^{-\alpha\infty}C^I_j(\phi(x,\theta),a)$$
	if $(x_n,(\theta_n,a_n))\to (x,(+\infty,a))$.
	\hfill$\square$\bigskip
	
	\underline{Proof of Lemma \ref{l12}.} Introduce the space $U:=\{(P,\bar\nu)\}$, where $P=\{P_1,P_2,\ldots\}$ is a measurable partition of $\bf V$ and $\bar\nu=\{\nu_1,\nu_2,\ldots\}$ is a sequence of signed finite measures: $\nu_i$ is defined on $P_i\times{\bf B}$, $i=1,2,\ldots$. Introduce the following operations: for $k\in\RR$, $k\cdot(P,\bar\nu):=(P,k\bar\nu)$ and $(P^1,\bar\nu^1)+(P^2,\bar\nu^2)$ is defined by $(P^3,\bar\nu^3)$, where $P^3=\{P^1_i\cap P^2_j,~i,j=1,2,\ldots\}$ and $\bar\nu^3=\{\nu^3_{i,j}=\nu^1_i+\nu^2_j,~i,j=1,2,\ldots\}$ with $\nu^3_{i,j}$ being defined on $(P^1_i\cap P^2_j)\times{\bf B}$.  Next, we say that $(P^1,\bar\nu^1)\sim(P^2,\bar\nu^2)$ if $\nu^1_i=\nu^2_j$ on each set $(P^1_i\cap P^2_j)\times{\bf B}$, $i,j=1,2,\ldots$. The described relation $\sim$ is obviously reflective and symmetric. For the transitivity, note that, if $(P^1,\bar\nu^1)\sim(P^2,\bar\nu^2)$ and $(P^2,\bar\nu^2)\sim(P^3,\bar\nu^3)$, then $\nu^1_i=\nu^2_j=\nu^3_k$ for each set $P^1_i\cap P^2_j\cap P^3_k$. Hence, $\nu^1_i=\nu^3_k$ on each set $P^1_i\cap P^3_k$. The factor (quotient)  space with respect to the described equivalence relation $\sim$ is $[U]$. The operations of scalar multiplication and addition on $[{\bf U}]$ are defined by $k\cdot[(P,\bar\nu)]:=[k\cdot(P,\bar\nu)]$ and $[(P,\bar\nu)]+[(Q,\bar\eta)]:=[(P,\bar\nu)+(Q,\bar\eta)]$ for each $[(P,\bar\nu)],[(Q,\bar\eta)]\in[{\bf U}]$. Then $[{\bf U}]$ is a vector space.	Each element $(Q,\bar\eta)$ of $[(P,\bar\nu)]\in[U]$ with
	$\bar\nu=\{\nu_1,\nu_2,\ldots\}$ and $\bar\eta=\{\eta_1,\eta_2,\ldots\}$, where $\nu_1,\nu_2,\ldots,\eta_1,\eta_2\ldots$ are finite
	 non-negative measures is such that the partition $Q$ may be different from the partition $P$, but each $\nu_i$ and $\eta_j$ are just restrictions on $P_i\times{\bf B}$ and $Q_j\times{\bf B}$ of a common non-negative measure $\mu$ on ${\bf V}\times{\bf B}$. Thus, $[(P,\bar\nu)]$ is identified with the measure $\mu$ with the $\sigma$-finite marginal on $\bf V$, i.e.  with property (\ref{enum2}). Thus, the set $\cal D$ can be considered as a convex subset of $[U]$ because equation (\ref{e9}) and condition (\ref{e13p}) are linear w.r.t. $\mu$.
	\hfill$\square$\bigskip
	
	\underline{Proof of Theorem \ref{t1}.} (a) According to Remark \ref{rem71},
	val(\ref{PZZeqn02})$\ge$val(\ref{enum4p}).
	In particular, by Condition \ref{ConstrainedPPZcondition05}, val(\ref{e13pp}-\ref{e14pp})$=$val(\ref{enum4p}) is finite, and one needs to consider only the measures $\mu$ with $\displaystyle \int_{{\bf V}\times{\bf B}}\bar C_0(x,b)\mu(dx\times db)<\infty$. If $\mu$ is such a feasible measure in the  program  (\ref{e13pp})-(\ref{e14pp}), then
	$${\cal V}_j(x_0,\pi^\mu)\le \int_{{\bf V}\times{\bf B}} \bar C_j(x,b)\mu(dx\times db),~~j=0,1,\ldots,J$$
	by Lemma \ref{l3}. Therefore, the induced strategy $\pi^\mu$ is feasible in problem (\ref{PZZeqn02}) and 
	${\cal V}_0(x_0,\pi^\mu)\le \int_{{\bf V}\times{\bf B}} \bar C_0(x,b)\mu(dx\times db)$
	leading to the required inequality val(\ref{PZZeqn02})$\le$val(\ref{enum4p})$=$val(\ref{e13pp}-\ref{e14pp}).
	
	(b) Let $\pi^*\in\Pi$ be a feasible and optimal strategy in problem (\ref{PZZeqn02}). Then the occupation measure $\mu^{\pi^*}$ is feasible in the primal program (\ref{e13pp})-(\ref{e14pp}) according to Corollary \ref{cor1} and explanations below it. Moreover,
	\begin{eqnarray*}
		{\cal V}_0(x_0,\pi^*)&=&\int_{{\bf V}\times{\bf B}} \bar C_0(x,b)\mu^{\pi^*}(dx\times db)=\mbox{val(\ref{PZZeqn02})}=\mbox{val(\ref{enum4p})},
	\end{eqnarray*}
	and thus the measure $\mu^{\pi^*}$ is feasible and optimal in the primal program (\ref{e13pp})-(\ref{e14pp}).
	
	(c) Let measure $\mu^*$ be feasible and optimal in the primal  program (\ref{e13pp})-(\ref{e14pp}). Then \linebreak $ \int_{{\bf V}\times{\bf B}} \bar C_0(x,b)\mu^*(dx\times db)<\infty$ because, as was shown, val(\ref{enum4p})$=$val(\ref{e13pp}-\ref{e14pp}) is finite.
	The induced strategy $\pi^{\mu^*}$ is feasible in problem (\ref{PZZeqn02}) by Lemma \ref{l3}. Moreover,
	\begin{eqnarray*}
		{\cal V}_0(x_0,\pi^{\mu^*})&\le &\int_{{\bf V}\times{\bf B}} \bar C_0(x,b)\mu^*(dx\times db)=\mbox{val(\ref{enum4p})}=\mbox{val(\ref{PZZeqn02}).}
	\end{eqnarray*}
	Thus the strategy $\pi^{\mu^*}$ is feasible and optimal in problem (\ref{PZZeqn02}). Obiously, $\pi^{\mu^*}\in\Pi$ according to Definitions \ref{d3} and \ref{d4}. \hfill$\square$\bigskip
	
	\par\noindent\textit{Proof of Lemma \ref{l4}.} Consider a strategy $\pi$ such that $\mu^\pi({\bf X}\times{\bf B})=\infty.$ Recall that $\mu^\pi$ satisfies 
	\begin{eqnarray*}
		\mu^\pi(dx\times {\bf B})=\delta_{x_0}(dx)+\int_{\bf X}\int_{[0,\infty)}\int_{\bf A}e^{-\alpha\theta}\delta_{l(\phi((y,\theta),a))}(dx)\mu^{\pi}(dy\times d\theta\times da)
	\end{eqnarray*}
	on $({\bf X},{\cal B}({\bf X})).$ Since $\mu^\pi({\bf X}\times{\bf B})=\infty$, we have $$\infty=1+\int_{\bf X}\int_{[0,\infty)}\int_{\bf A}e^{-\alpha\theta} \mu^{\pi}(dy\times d\theta\times da)=1+ \int_{[0,\infty)}e^{-\alpha\theta} \mu^{\pi}({\bf X}\times d\theta\times {\bf A}).$$ Now, 
	\begin{eqnarray*}
		{\cal V}_0(x_0,\pi)&=& \int_{{\bf X}_\Delta\times [0,\infty]\times{\bf A}} \bar{C}_0(x,(\theta,a))\mu^\pi(dx\times d\theta\times da)\\
		&\ge& \int_{[0,\infty)} e^{-\alpha\theta} \delta\mu^\pi({\bf X}\times d\theta\times {\bf A})=\infty,
	\end{eqnarray*}
	as required. $\hfill\Box$
	\bigskip
	
	\underline{Proof of Theorem \ref{t5}.} (a) Let $\bar g\in\RR_+^J$ be arbitrarily fixed. Under Condition \ref{con5}(b), $0\le W^*_{\bar g}(x)\le(J+1){\cal C}\int_0^\infty e^{-\alpha t}dt$: apply the strategy $\pi_i(db|h_{i-1})=\delta_\infty(d\theta)\delta_{\hat a}(da)$ with an arbitrarily fixed $\hat a\in{\bf A}$. Thus, $W^*_{\bar g}(\cdot)$ is a bounded non-negative lower semicontinuous function on $\bf V$, and there exists an optimal deterministic stationary strategy $\hat f$ providing the infimum in (\ref{e31}):  see Corollary 9.17.2 of \cite{Bertsekas:1978}. Without loss of generality, we fix $\hat f(x)=f^*(x)$ for $x\in{\bf V}^c$. (Recall equalities (\ref{e5}).)
	The corresponding occupation measure $\mu_{\bar g}:=\mu^{\hat f}$, as usual corresponding to the fixed initial state $x_0$, is finite on ${\bf V}\times{\bf B}$ by Lemma \ref{l4} and satisfies equality (\ref{e22}). (See (\ref{enum1}.)
	
	For the pair of problems (\ref{e26}) and 
	\begin{equation}\label{e32}
		\sup_{W(\cdot)\in{\cal W}} L_2(\mu,W,\bar g)\to\inf_{\mu\in{\cal D}^f},
	\end{equation}
	it is known that val(\ref{e32})$\ge$val(\ref{e26}).
	(See, e.g., (1.8) in \cite{rock}.) Arguing similarly to Remark \ref{r2}, we see that problem (\ref{e32})  is equivalent to the following one:
	\begin{eqnarray*}
		&&\mbox{Minimize over }\mu\in{\cal D}^f~\int_{{\bf V}\times{\bf B}} \left(\bar C_0(x,b)+\sum_{j=1}^J g_j\bar C_j(x,b)\right)\mu(dx\times db) - \sum_{j=1}^J g_jd_j\\
		&&\mbox{subject to (\ref{e22})}.
	\end{eqnarray*}
	Therefore, since the measure $\mu_{\bar g}$ is in ${\cal D}^f$ and satisfies equation (\ref{e22}),
	\begin{equation}\label{enum22}
		\mbox{val(\ref{e32})}\le \int_{{\bf V}\times{\bf B}} \left(\bar C_0(x,b)+\sum_{j=1}^J g_j\bar C_j(x,b)\right)\mu_{\bar g}(dx\times db)-\sum_{j=1}^Jg_jd_j
	\end{equation}
	and, since problem  (\ref{e26}) is equivalent to (\ref{e27}-\ref{e28}), val(\ref{e26})$\ge W^*_{\bar g}(x_0)-\sum_{j=1}^Jg_jd_j$
	because inequality (\ref{e28}) certainly holds for the bounded Bellman function $W^*_{\bar g}(\cdot)\in{\cal W}$. But
	$$\int_{{\bf V}\times{\bf B}} \left(\bar C_0(x,b)+\sum_{j=1}^J g_j\bar C_j(x,b)\right)\mu_{\bar g}(dx\times db)=W^*_{\bar g}(x_0)$$
	because here the both expressions equal (\ref{e31}) under $x=x_0$. Thus, by (\ref{enum22}), val(\ref{e26})$\ge$val(\ref{e32}),
	and the optimal values of problems (\ref{e32}) and (\ref{e26}) coincide and equal
	$W^*_{\bar g}(x_0)-\sum_{j=1}^Jg_jd_j$.
	Hence $W^*_{\bar g}(\cdot)$ solves problem (\ref{e27})-(\ref{e28}) which is equivalent to  problem (\ref{e26}).
	
	(b) This statement follows directly from Remark \ref{rem3}(a).
	
	(c) This  statement follows from parts (a) and (b).
	
	(d) According to parts (b,c),
	$\sup_{(W(\cdot),\bar g)\in{\cal W}\times\RR_+^J} \inf_{\mu\in{\cal D}^f} L_2(\mu,W,\bar g)=\mbox{val(\ref{e25}) }=h(\bar g^*)$.
	By Theorem \ref{prop2}(a),
	$h(\bar g^*)=\mbox{val(\ref{enum4}) }=\mbox{val(\ref{enum4p})}$.
	Finally, under the imposed conditions, val(\ref{enum4p})$=$val(\ref{e13pp}-\ref{e14pp})$=$val(\ref{e21}-\ref{e23})$=$val(\ref{e24})$= \inf_{\mu\in{\cal D}^f}\sup_{(W(\cdot),\bar g)\in{\cal W}\times\RR_+^J} L_2(\mu,W,\bar g)$.
	\hfill$\square$\bigskip
	
	\underline{Proof of Lemma \ref{l7}.} To show that  $W(x)\le{\cal V}_0(x,\pi)+\sum_{j=1}^J g_j{\cal V}_j(x,\pi)$ for all strategies $\pi$ and all $x\in{\bf V}$, it is sufficient to consider only the strategies with ${\cal V}_0(x,\pi)<\infty$. Recall equalities (\ref{e0}) and ${\cal V}_j(x,\pi)=0$ for all $x\in{\bf V}^c$ and all strategies under consideration. For such strategies, $\lim_{i\to\infty}T_i=\infty$ $\PP^\pi_x$-almost surely. Indeed, otherwise, $\PP^\pi_x(\exists T<\infty:~\forall i=1,2,\ldots~T_i\le T)>0$ and
	$${\cal V}_0(x,\pi)\ge \EE^\pi_x\left[\sum_{i=1}^\infty e^{-\alpha T_i}\delta\right]\ge \PP^\pi_x(\exists T<\infty:~\forall i=1,2,\ldots~T_i\le T) \sum_{i=1}^\infty e^{-\alpha T} \delta=\infty.$$
	Therefore, for the strategies of our interest, 
	$$|\EE^\pi_x[W(X_I)]|\le\sup_{y\in{\bf V}} |W(y)|\cdot\EE^\pi_x\left[e^{-\alpha T_I}\right]\to 0 \mbox{ as } I\to\infty,~~~\forall x\in{\bf V}.$$
	Property $\PP^\pi_x(X_I\in{\bf X})=\EE^\pi_x[e^{-\alpha T_I}]$ is here in use. 
	
	Further, for each $x\in{\bf V}$, $I=1,2,\ldots$,
	\begin{eqnarray*}
		0 &\le & \EE^\pi_x\left[\sum_{i=1}^I\left\{\bar C_0(X_{i-1},B_i)+\sum_{j=1}^J g_j\bar C_j(X_{i-1},B_i)-W(X_{i-1})\right.\right.\\
		&&\left.\left.\vphantom{\sum_i^I}+\int_{\bf V} W(y)Q(dy|X_{i-1},B_i)\right\}\right]\\
		&=&\EE^\pi_x\left[\sum_{i=1}^I\left\{\bar C_0(X_{i-1},B_i)+\sum_{j=1}^J g_j\bar C_j(X_{i-1},B_i)\right\}\right]-\EE^\pi_x[W(X_0)]\\
		&&+\EE^\pi_x[W(X_1)]-\EE^\pi_x[W(X_1)]+\EE^\pi_x[W(X_2)]-\ldots+\EE^\pi_x[W(X_I)].
	\end{eqnarray*}
	Note that all the particular terms here are finite.
	After passing to the limit as $I\to\infty$, we obtain the desired inequality
	$W(x)\le {\cal V}_0(x,\pi)+\sum_{j=1}^J g_j{\cal V}_j(x,\pi)$ for all $x\in{\bf V}$.
	
	For the strategy $f$ we have equalities in the previous formulae: 
	$$0= {\cal V}_0(x,f)+\sum_{j=1}^J g_j{\cal V}_j(x,f)-W(x)+\lim_{I\to\infty} \EE^f_x[W(X_I)].$$
	We see that ${\cal V}_0(x,f)<\infty$, and so $\lim_{I\to\infty} \EE^f_x[W(X_I)]=0$. Finally,
	$$W(x)= {\cal V}_0(x,f)+\sum_{j=1}^J g_j{\cal V}_j(x,f),$$
	and the proof is completed. \hfill$\square$\bigskip

	\underline{Proof of Lemma \ref{l9}.} 
	The solvability of equation (\ref{eqn1}) is obvious. Below, we omit the subscript $g$ in $a_g$ for brevity.
	
	(a) Using the well known formula for the derivative of an implicit function, we obtain
\begin{eqnarray}\label{e43}
	\frac{da}{dg} &=& -\frac{\frac{\partial}{\partial g}\left(\frac{gH}{\alpha} e^{\frac{\alpha a}{D}}-\frac{\alpha K}{D}-\frac{gH}{\alpha}-\frac{gH}{D}a\right)}{\frac{\partial}{\partial a}\left(\frac{gH}{\alpha} e^{\frac{\alpha a}{D}}-\frac{\alpha K}{D}-\frac{gH}{\alpha}-\frac{gH}{D}a\right)}\\
	&=&-\frac{\frac{H}{\alpha} e^{\frac{\alpha a}{D}}-\frac{H}{\alpha}-\frac{Ha}{D}}{\frac{gH}{D}e^{\frac{\alpha a}{D}}-\frac{gH}{D}}=-\left[\frac{D}{\alpha g}-\frac{a}{g(e^{\frac{\alpha a}{D}}-1)}\right]=-\frac{\alpha K}{g^2H(e^{\frac{\alpha a}{D}}-1)}<0.\nonumber
\end{eqnarray}
The last equality is by (\ref{eqn1}).
	Both the numerator and the denominator are positive at $a>0$. 
	Therefore, $a_g$ decreases with $g$.
	
	(b) The expression for $g$ follows from the definition (\ref{eqn1})  of $a_g$:
	$$g=\frac{\frac{\alpha K}{D}}{\frac{H}{\alpha} e^{\frac{\alpha a}{D}}-\frac{H}{\alpha}-\frac{H a}{D}}=\frac{\alpha^2K}{HD[e^{\frac{\alpha a}{D}}-1-\frac{\alpha a}{D}]}>0.$$
	Since the positive $g$ and $a_g$ are in 1-1 correspondence and $a_g$ changes monotonically with respect to $g$, the last assertions in item (b) follow from the obvious expressions:
	$$\lim_{a\downarrow 0} g=\infty,~~~\lim_{a\to\infty} g=0.$$
	
	(c)
	$$\frac{d(ag)}{dg}= a+g\frac{da}{dg}=a-g\left[\frac{D}{\alpha g}-\frac{a}{g(e^{\frac{\alpha a}{D}}-1)}\right]
	=\frac{ae^{\frac{\alpha a}{D}}-\frac{D}{\alpha} e^{\frac{\alpha a}{D}}+\frac{D}{\alpha}}{e^{\frac{\alpha a}{D}}-1}.$$
	Both the numerator and the denominator are positive at $a>0$. (Note, the derivative w.r.t. $a$ of the numerator is positive.)
	Therefore, the product $ga_g$ increases with $g$.
	
	According to Item (b), one can firstly study the dependence of the product $ga_g$ on $a_g$:
	$$ga_g=\frac{\alpha^2 K}{HD}\cdot\frac{a_g}{e^{\frac{\alpha a_g}{D}}-1-\frac{\alpha a_g}{D}}\longrightarrow \frac{\alpha^2 K}{HD}\cdot\frac{1}{\frac{\alpha}{D} e^{\frac{\alpha a_g}{D}}-\frac{\alpha}{D}}\longrightarrow
	\left\{\begin{array}{ll} \infty \mbox{ as } a_g\downarrow 0\Longleftrightarrow g\to\infty;\\ 0 \mbox{ as } a_g\to \infty\Longleftrightarrow g\downarrow 0\end{array}\right.$$
	by L'Hopital's rule.
	\hfill$\square$\bigskip
	
	\underline{Proof of Lemma \ref{l10}.} 
According to Lemma \ref{l7}, it is sufficient to check that the presented function $W^*_0(\cdot)$ satisfies the optimality equation (\ref{e29}) with $f=f^0$.

Since $\bar C_0(x,b=(\theta,a))$ does not depend on $a\in{\bf A}$, we firstly compute $\min_{a\in{\bf A}} W^*_0(a)=\frac{D}{\alpha}e^{-\frac{\alpha L}{D}}$ which is achieved at $a=L$. To compute the right-hand part of (\ref{e29}) for $W^*_0(\cdot)$, we firstly minimize the expression in the parentheses w.r.t. $a\in{\bf A}$ , keeping the component $\theta$ of $b=(\theta,a)$ fixed. If $\theta\le\frac{x}{D}$, then we have
$$e^{-\alpha\theta}K+e^{-\alpha\theta}\min_{a\in{\bf A}} W^*_0(\min\{x-D\theta+a,L\})=e^{-\alpha\theta}K+e^{-\alpha\theta} \frac{D}{\alpha}e^{-\frac{\alpha L}{D}},$$
and, if $\theta\ge\frac{x}{D}$, then we have
\begin{eqnarray*}
	&&\frac{D}{\alpha}\left(e^{-\frac{\alpha x}{D}}-e^{-\alpha\theta}\right)+e^{-\alpha\theta}K+e^{-\alpha\theta}\min_{a\in{\bf A}}W^*_0(a)\\
	&=&\frac{D}{\alpha}\left(e^{-\frac{\alpha x}{D}}-e^{-\alpha\theta}\right)+e^{-\alpha\theta}K+e^{-\alpha\theta}\frac{D}{\alpha}e^{-\frac{\alpha L}{D}}
\end{eqnarray*}

In the first case, the infimum with respect to $\theta$ is achieved at  $\theta=\frac{x}{D}$ and equals $e^{-\frac{\alpha x}{D}}\left(K+\frac{D}{\alpha}e^{-\frac{\alpha L}{D}}\right)$, and in the second case, that infimum is achieved at $\theta=\infty$ because $K-\frac{D}{\alpha}+\frac{D}{\alpha} e^{-\frac{\alpha L}{D}}>0$, leading to $\frac{D}{\alpha}e^{-\frac{\alpha x}{D}}$.  Since $K+\frac{D}{\alpha} e^{-\frac{\alpha L}{D}}>\frac{D}{\alpha}$, eventually we see that the right-hand part of (\ref{e29}) equals $\frac{D}{\alpha}e^{-\frac{\alpha x}{D}}=W^*_0(x)$.\hfill$\square$\bigskip
	
	\underline{Proof of Theorem \ref{t6}.} 
	According to Lemma \ref{l7}, it is sufficient to check that the presented functions $W^*_g(\cdot)$ satisfy the optimality equation (\ref{e29}).
	
	(a) Assume that inequality (\ref{eqn2}) is not strict.
	The function (\ref{eqn3}) decreases when $x<a_g$ and increases when $x>a_g$ because
	$$\frac{d~W^*_g(x)}{dx}=\frac{gH}{\alpha}\left(1-e^{\alpha\frac{a_g-x}{D}}\right).$$
	Thus,
	$$\min_{x\in{\bf X}} W^*_g(x)=W^*_g(a_g)=\frac{a_ggH}{\alpha}.$$
	
	If $\theta\ge \frac{x}{D}$, then the expression in the parentheses of (\ref{e29}) for $W^*_g(\cdot)$ equals
	\begin{equation}\label{eqn8}
		\frac{D}{\alpha}(e^{-\alpha\frac{x}{D}}-e^{-\alpha\theta})+e^{-\alpha\theta}K+\frac{gHx}{\alpha}+\frac{gHD}{\alpha^2}(e^{-\alpha\frac{x}{D}}-1)+e^{-\alpha\theta}W^*_g(a).
	\end{equation}
	We substitute $\min_{a\in{\bf A}} W^*_g(a)=\frac{a_ggH}{\alpha}$ and obtain
	$$\frac{e^{-\alpha\theta}}{\alpha}[a_ggH+\alpha K-D]+\frac{D}{\alpha} e^{-\alpha\frac{x}{D}} +\frac{xgH}{\alpha}+\frac{DgH}{\alpha^2}(e^{-\alpha\frac{x}{D}}-1).$$
	Since $Hga_g+\alpha K-D\le 0$, the minimum with respect to $\theta\ge\frac{x}{D}$ is provided by $\theta=\frac{x}{D}$ leading to
	$$e^{-\alpha\frac{x}{D}}K+\frac{xgH}{\alpha} +\frac{DgH}{\alpha^2}(e^{-\alpha\frac{x}{D}}-1)+\frac{a_ggH}{\alpha} e^{-\alpha\frac{x}{D}}.$$
	Note, this optimal value of $\theta$ is unique if  inequality (\ref{eqn2}) is  strict; otherwise, the value of $\theta\ge\frac{x}{D}$ can be taken arbitrarily.
	By (\ref{eqn1}), the last term equals
	$$\left[\frac{DgH}{\alpha^2} e^{\frac{\alpha a_g}{D}} -K-\frac{DgH}{\alpha^2}\right] e^{-\alpha\frac{x}{D}}$$
	resulting in 
	$$\frac{xgH}{\alpha}-\frac{DgH}{\alpha^2}+\frac{DgH}{\alpha^2}e^{\alpha\frac{a_g-x}{D}}=W^*_g(x).$$
	
	It remains to show that, when $\theta<\frac{x}{D}$, the expression in the parentheses of (\ref{e29}) for $W^*_g(\cdot)$ is strictly larger than $W^*_g(x)$. Now this expression equals
	\begin{equation}\label{eqn7}
		e^{-\alpha\theta}K+\frac{gHx}{\alpha}(1- e^{-\alpha\theta})+gHD\left[e^{-\alpha\theta}\left(\frac{\theta}{\alpha}+\frac{1}{\alpha^2}\right)-\frac{1}{\alpha^2}\right]+e^{-\alpha\theta} W^*_g(\min\{x-D\theta+a,L\}).
	\end{equation}
	According to the above established properties of the function (\ref{eqn3}), the minimum with respect to $a\in{\bf A}$ is provided  by $a=a_g-x+D\theta$ if $x-D\theta\le a_g$ and by $a=0$ in case $x-D\theta\ge a_g$. 
	
	In the first case $\min_{a\in{\bf A}}W^*_g(\min\{x-D\theta+a,L\})=W^*_g(a_g)=\frac{a_ggH}{\alpha}$, and the derivative with respect to $\theta$ of  function (\ref{eqn7}) equals
	$= e^{-\alpha\theta}[-\alpha K+gHx-gHD\theta-a_ggH]<0$
	because $\theta D+a_g\ge x$. Thus, expression (\ref{eqn7}) is strictly larger than its value at $\theta=\frac{x}{D}$, which equals
	$$e^{-\alpha\frac{x}{D}} K+\frac{gHx}{\alpha}(1-e^{-\alpha\frac{x}{D}})+gHD\left(\frac{x}{D\alpha} e^{-\alpha\frac{x}{D}}+\frac{1}{\alpha^2} e^{-\alpha\frac{x}{D}}-\frac{1}{\alpha^2}\right)+e^{-\alpha\frac{x}{D}}\frac{a_ggH}{\alpha}.$$
	After we again use equation (\ref{eqn1}) for representing $\frac{a_ggH}{\alpha}$, the last expression becomes
	\begin{eqnarray*}
		&& \frac{xgH}{\alpha}-\frac{DgH}{\alpha^2}+e^{-\alpha\frac{x}{D}}\left[K+\frac{DgH}{\alpha^2}+\frac{DgH}{\alpha^2} e^{\frac{\alpha a_g}{D}}-K-\frac{DgH}{\alpha^2}\right]=W^*_g(x).
	\end{eqnarray*}
	
	In the second case, when $x-D\theta\ge a_g$, $\min_{a\in{\bf A}}W^*_g(\min\{x-D\theta+a,L\})=W^*_g(x-D\theta)$, and expression (\ref{eqn7}) equals
	\begin{eqnarray*}
		&&e^{-\alpha\theta}K+\frac{gHx}{\alpha}(1- e^{-\alpha\theta})+gHD\left(\frac{\theta}{\alpha}e^{-\alpha\theta}+\frac{1}{\alpha^2}e^{-\alpha\theta}
		-\frac{1}{\alpha^2}\right)\\
		&&+e^{-\alpha\theta}\left(\frac{(x-D\theta)gH}{\alpha}-\frac{DgH}{\alpha^2}+\frac{DgH}{\alpha^2} e^{-\alpha\frac{x-D\theta}{D}}\cdot e^{\alpha\frac{a_g}{D}}\right)\\
		&=& e^{-\alpha\theta}K+\frac{xgH}{\alpha}-\frac{DgH}{\alpha^2}+\frac{DgH}{\alpha^2} e^{\alpha\frac{a_g-x}{D}}>W^*_g(x).
	\end{eqnarray*}
	
	(b) Assume that inequality (\ref{eqn4}) is not strict, consider the case of $\theta\ge\frac{x}{D}$ and show that the presented functions $W^*_g(\cdot)$ satisfy the optimality equation (\ref{e29}).
	
	The expression in the parentheses of (\ref{e29}) is the same as in Item (a): see (\ref{eqn8}). But $\min_{a\in{\bf A}} W^*_g(a)$ is different because we have to use expression (\ref{eqn5}):
	$$\frac{d~W^*_g(a)}{da}=\frac{gH}{\alpha}-e^{-\alpha\frac{a}{D}}\left(\frac{gH}{\alpha}+1\right).$$
	Therefore, $\min_{a\ge 0} W^*_g(a)$ is provided by
	$$\hat a_g=\frac{D}{\alpha}\ln\left(1+\frac{\alpha}{gH}\right)$$
	and, keeping in mind that $a$ should not exceed $L$,
	$$\inf_{a\in{\bf A}}W^*_g(a)\ge W^*_g(\hat a_g)=\frac{\hat a_g gH}{\alpha}-\frac{DgH}{\alpha^2}+\left(\frac{DgH}{\alpha^2}+\frac{D}{\alpha}\right)\frac{\frac{gH}{\alpha}}{\frac{gH}{\alpha}+1}=\frac{\hat a_g gH}{\alpha}.$$
	Therefore, the infimum w.r.t. $a\in{\bf A}$ of expression (\ref{eqn8}) is not smaller than
	\begin{equation}\label{eqn10}
		\frac{e^{-\alpha\theta}}{\alpha}[-D+\alpha K+Hg\hat a_g]+\frac{xgH}{\alpha}-\frac{DgH}{\alpha^2}+\left(\frac{DgH}{\alpha^2}+\frac{D}{\alpha}\right) e^{-\alpha\frac{x}{D}}.
	\end{equation}
	
	Let us show that $[-D+\alpha K+Hg\hat a_g]\ge 0$. As the result, we shall deduce that the infimum w.r.t. $b=(\theta,a)\in{\bf B}$ of expression (\ref{eqn8}) equals $W^*_g(x)$ and is provided by $\theta=\infty$ and arbitrary $\hat a\in{\bf A}$.
	
	Firstly, note that the product $g\hat a_g$ increases with $g$ because the derivative
	$$\frac{d}{dg}\left(g\frac{D}{\alpha}\ln\left(1+\frac{\alpha}{Hg}\right)\right)=\frac{D}{\alpha}\left(\ln\left(1+\frac{\alpha}{Hg}\right)-\frac{\alpha}{Hg+\alpha}\right)$$
	is positive: its limit as $g\to\infty$ is zero and its derivative
	\begin{eqnarray*}
		\frac{D}{\alpha}\cdot\frac{d}{dg}\left(\ln\left(1+\frac{\alpha}{Hg}\right)-\frac{\alpha}{Hg+\alpha}\right)&=&\frac{D}{\alpha}\left(-\frac{Hg}{Hg+\alpha}\cdot\frac{\alpha}{Hg^2}+\frac{\alpha H}{(Hg+\alpha)^2}\right)\\
		&=&\frac{-D\alpha}{(Hg+\alpha)^2g}<0
	\end{eqnarray*}
	is negative. 
	
	According to Lemma \ref{l9}(c), since $\alpha K<D$, there exists the unique $\hat g$ for which $\alpha K+H\hat ga_{\hat g}=D$, and in the considered case $g\ge \hat g$. For $\hat g$, according to (\ref{eqn1}),  we have
	$$1+\frac{\hat g H}{\alpha}=\frac{\hat g H}{\alpha} e^{\frac{\alpha a_{\hat g}}{D}}~\Longrightarrow ~
	a_{\hat g}=\frac{D}{\alpha}\ln\left(1+\frac{\alpha}{\hat gH}\right)=\hat a_{\hat g},$$
	and so $[-D+\alpha K+H\hat g\hat a_{\hat g}]=0$. As was shown, the product $g\hat a_g$ increases with $g$. Thus, $[-D+\alpha K+Hg\hat a_g]\ge 0$ for $g\ge \hat g$, equivalently, when the non-strict inequality (\ref{eqn4}) holds. Thus, the minimum in (\ref{eqn10}) w.r.t.  $\theta\ge\frac{x}{D}$ is provided by $\theta=\infty$ and equals $W^*_g(x)$: see formula (\ref{eqn5}). Note, the optimal value $\theta=\infty$ is unique if inequality (\ref{eqn4}) is strict. Also note that, in case $gHa_g+\alpha K-D=0$, the value of $\theta\ge\frac{x}{D}$ can be taken arbitrarily: the corresponding value of $g$ was denoted above as $\hat g$ and $\hat a_{\hat g}=a_{\hat g}$, so that $[-D+\alpha K+Hg\hat a_{g}]=0$.
	
	It remains to show that, when $\theta<\frac{x}{D}$, the expression in the parentheses of (\ref{e29}) for $W^*_g(\cdot)$ is strictly larger than $W^*_g(x)$. It has the same form as in Item (a): see (\ref{eqn7}).
	According to the above established properties of  function (\ref{eqn5}), the minimum w.r.t. $a\in{\bf A}$ is provided  by $a=\hat a_g-x+D\theta$ if $x-D\theta\le \hat a_g$ and by $a=0$ in case $x-D\theta\ge \hat a_g$. 
	
	In the first case $\min_{a\in{\bf A}}W^*_g(\min\{x-D\theta+a,L\})=W^*_g(\hat a_g)=\frac{\hat a_ggH}{\alpha}$, and the derivative w.r.t. $\theta$ of  function (\ref{eqn7}) equals
	$ e^{-\alpha\theta}[-\alpha K+gHx-gHD\theta-\hat a_ggH]<0$
	because $D\theta +\hat a_g\ge x$. Thus, expression (\ref{eqn7}) is strictly larger than its value at $\theta=\frac{x}{D}$, which equals
	\begin{eqnarray*}
		&&e^{-\alpha\frac{x}{D}} K+\frac{gHx}{\alpha}(1-e^{-\alpha\frac{x}{D}})+gHD\left(\frac{x}{D\alpha} e^{-\alpha\frac{x}{D}}+\frac{1}{\alpha^2} e^{-\alpha\frac{x}{D}}-\frac{1}{\alpha^2}\right)+e^{-\alpha\frac{x}{D}}\frac{\hat a_ggH}{\alpha}\\
		&=&W^*_g(x)+e^{-\frac{\alpha x}{D}}\left(K-\frac{D}{\alpha}+\frac{\hat a_g gH}{\alpha}\right)\ge W^*_g(x).
	\end{eqnarray*}
	The inequality holds
	because, as was shown above,
	$K-\frac{D}{\alpha}+\frac{\hat a_g gH}{\alpha}\ge 0$ when (the non-strict) inequality (\ref{eqn4}) holds.
	
	In the second case, when $x-D\theta\ge \hat a_g$, $\min_{a\in{\bf A}}W^*_g(\min\{x-D\theta+a,L\})=W^*_g(x-D\theta)$, and expression (\ref{eqn7}) equals
	\begin{eqnarray*}
		&&e^{-\alpha\theta}K+\frac{gHx}{\alpha}(1- e^{-\alpha\theta})+gHD\left(\frac{\theta}{\alpha}e^{-\alpha\theta}+\frac{1}{\alpha^2}e^{-\alpha\theta}
		-\frac{1}{\alpha^2}\right)\\
		&&+e^{-\alpha\theta}\left(\frac{(x-D\theta)gH}{\alpha}-\frac{DgH}{\alpha^2}+\left(\frac{DgH}{\alpha^2}+\frac{D}{\alpha}\right) e^{-\alpha\frac{x-D\theta}{D}}\right)\\
		&=& e^{-\alpha\theta}K+\frac{xgH}{\alpha}-\frac{DgH}{\alpha^2}+\left(\frac{DgH}{\alpha^2}+\frac{D}{\alpha}\right) e^{-\frac{\alpha x}{D}}>W^*_g(x).
	\end{eqnarray*}
	
	(c) From equation (\ref{eqn1}) we have
	$$\frac{DgH}{\alpha^2} e^{\frac{\alpha a_g}{D}}=K+\frac{DgH}{\alpha^2}+\frac{Hga_g}{\alpha}=\frac{DgH}{\alpha^2}+\frac{D}{\alpha}.$$
	Hence functions (\ref{eqn3}) and (\ref{eqn5}) coincide.
	
	The uniform optimality of the deterministic stationary strategies of the form $f(x)=\left(\theta\ge \frac{x}{D},a_g\right)$, $x\in {\bf X}$ was established during the proof of items (a) and (b).
	\hfill$\square$\bigskip
	
	\underline{Proof of Theorem \ref{t8}.} 
	(a) According to Theorem \ref{t5}(b), $h(g)=W^*_g(0)-gd$.
	When $g>g_c$, i.e., by  Lemma \ref{l9}(c),
	$\alpha K+Hga_g>D$, we have by Theorem \ref{t6}(b) that
	$$\frac{dh}{dg}=\frac{d}{dg}\left[-\frac{DgH}{\alpha^2}+\frac{DgH}{\alpha^2}+\frac{D}{\alpha}-gd\right]=-d<0.$$
	
	Suppose $g_{min}\le g < g_c$,  i.e., $\alpha K+Hga_g<D$. According to Theorem \ref{t6}(a),
	\begin{eqnarray}
		\frac{dh}{dg} &=& \frac{d}{dg}\left[\frac{DgH}{\alpha^2}\left(e^{\alpha\frac{a_g}{D}}-1\right)-dg\right]
		= \frac{DH}{\alpha^2}\left(e^{\alpha\frac{a_g}{D}}-1\right)\nonumber\\
		&&-\frac{gH}{\alpha}e^{\alpha\frac{a_g}{D}}\left(\frac{D}{\alpha g}-\frac{a_g}{g(e^{\alpha\frac{a_g}{D}}-1)}\right)-d
		=\frac{Ha_g}{\alpha}\cdot\frac{e^{\alpha\frac{a_g}{D}}}{e^{\alpha\frac{a_g}{D}}-1} -\frac{DH}{\alpha^2}-d.\label{e16}
	\end{eqnarray}
	The second equality is by (\ref{e43}). 
	The function $u(z):=\frac{z\ln z}{z-1}$  increases  at $z>1$ because $\frac{du}{dz}=\frac{z-1-\ln z}{(z-1)^2}>0$. Therefore, since $a_g$ decreases with $g$ by Lemma \ref{l9}(a), at $g<g_c$ we have
	\begin{eqnarray*}
		\frac{Ha_g}{\alpha}\cdot\frac{e^{\alpha\frac{a_g}{D}}}{e^{\alpha\frac{a_g}{D}}-1}&=& \frac{HD}{\alpha^2} u(e^{\alpha\frac{a_g}{D}})>\frac{HD}{\alpha^2} u(e^{\alpha\frac{a_{g_c}}{D}})=\frac{Ha_{g_c}}{\alpha}\cdot\frac{e^{\alpha\frac{a_{g_c}}{D}}}{e^{\alpha\frac{a_{g_c}}{D}}-1}=d_c+\frac{DH}{\alpha^2}
	\end{eqnarray*}
	and, since $d\le d_c$, $\frac{dh}{dg}>0$.
	
	To summarise, the concave function $h(\cdot)$ attains its maximum at $g^*=g_c$, i.e., when $\alpha K+Hg_ca_{g_c}=D$. According to Theorem \ref{t6}(c), all the deterministic stationary strategies of the form $f(x)=(\theta(x)\ge \frac{x}{D},a_{g_c})$ are uniformly optimal in the sense of (\ref{e34}) at $g=g_c$.  In particular, so is $f^*$, and we are going  to show that ${\cal V}_1(0,f^*)=d$. To do so, note that
	$$\bar C_1(X_0,(\theta^*(X_0),a_{g_c}))=\bar C_1(0,(\theta^*(0),a_{g_c}))=0$$
	and $X_1=a_{g_c}$ with probability $e^{-\alpha\tau^*}$. (The complementary probability $(1-e^{-\alpha\tau^*})$ is for the event $X_1=\Delta$.) Thus,
	$$\bar C_1(X_1,(\theta^*(X_1),a_{g_c}))=e^{-\alpha\tau^*}\left[\frac{Ha_{g_c}}{\alpha}+\frac{HD}{\alpha^2}\left(e^{-\alpha\frac{a_{g_c}}{D}}-1\right)\right].$$
	And so on: for $i=2,3,\ldots$,
	$$\bar C_1(X_{i-1},(\theta^*(X_{i-1}),a_{g_c}))=\left[e^{-\alpha\left(\frac{a_{g_c}}{D}+\tau^*\right)}\right]^{i-2}\cdot
	e^{-\alpha\tau^*}\left[\frac{Ha_{g_c}}{\alpha}+\frac{HD}{\alpha^2}\left(e^{-\alpha\frac{a_{g_c}}{D}}-1\right)\right].$$
	leading to
	\begin{eqnarray*}
		{\cal V}_1(0,f^*) &=& \sum_{i=2}^\infty \left[e^{-\alpha\left(\frac{a_{g_c}}{D}+\tau^*\right)}\right]^{i-2}\cdot
		e^{-\alpha\tau^*}\left[\frac{Ha_{g_c}}{\alpha}+\frac{HD}{\alpha^2}\left(e^{-\alpha\frac{a_{g_c}}{D}}-1\right)\right]\\
		&=& \frac{e^{-\alpha\tau^*}\left[\frac{Ha_{g_c}}{\alpha}+\frac{HD}{\alpha^2}\left(e^{-\alpha\frac{a_{g_c}}{D}}-1\right)\right]}{1-e^{-\alpha\left(\frac{a_{g_c}}{D}+\tau^*\right)}}
		= \frac{\frac{Ha_{g_c}}{\alpha}+\frac{HD}{\alpha^2}\left(e^{-\alpha\frac{a_{g_c}}{D}}-1\right)}{e^{\alpha\tau^*}-e^{-\alpha\frac{a_{g_c}}{D}}}=d.
	\end{eqnarray*}
	
	From Theorem \ref{t6}(c) we have
	$${\cal V}_0(0,f^*)=W^*_{g^*}(0)-g^*d=\frac{D}{\alpha}-g^*d.$$
	Lemma \ref{l4} implies that $\mu^{f^*}({\bf X}\times{\bf B})<\infty$, and hence $\mu^{f^*}\in{\cal D}$.
	According to Remark \ref{rem3}(b),
	\begin{eqnarray*}
		\inf_{\mu\in{\cal D}} L_1(\mu,g^*)&=& W^*_{g^*}(0)-g^*d={\cal V}_0(0,f^*)\\
		&=& \int_{{\bf X}\times{\bf B}}\bar C_0(x,b)\mu^{f^*}(dx\times db)+g^*\left(\int_{{\bf X}\times{\bf B}}\bar C_1(x,b)\mu^{f^*}(dx\times db)-d\right):
	\end{eqnarray*}
	the last equality holds because $\int_{{\bf X}\times{\bf B}}\bar C_1(x,b)\mu^{f^*}(dx\times db)={\cal V}_1(0,f^*)=d$. Thus,
	$$\inf_{\mu\in{\cal D}} L_1(\mu,g^*)=L_1(\mu^{f^*},g^*)=\int_{{\bf X}\times{\bf B}}\bar C_0(x,b)\mu^{f^*}(dx\times db)$$
	and $\mu^{f^*}$ is an optimal (and feasible) solution to program (\ref{enum4p}) (equivalently, to program (\ref{e13pp}-\ref{e14pp})) by Theorem \ref{prop2}(b-ii). Finally, the deterministic stationary strategy $f^*$ is clearly induced by $\mu^{f^*}$, and thus is optimal (and feasible) in problem (\ref{PZZeqn02}) by Theorem \ref{t1}(c).
	
	When $d=d_c$, $\tau^*=0$, and this is the limiting case of Item (b): $\lim_{g\uparrow g_c}\frac{dh}{dg}=d_c-d=0$.
	
	(b) Since $\lim_{g\uparrow g_c}\frac{dh}{dg}=d_c-d$,  in case $d_c<d$ the dual functional $h(\cdot)$, which is concave and hence continuous at $g>0$, decreases at $g=g_c$. Recall that $\frac{dh}{dg}=-d<0$ at $g>g_c$. Therefore, the analytical maximum of the concave function $h(\cdot)$ is attained at $g^*=\hat g<g_c$ when $\frac{dh}{dg}=0$, i.e., when $a_{\hat g}=a^*$ is the solution of  equation (\ref{e15}) (see (\ref{e16})). Note that equation (\ref{e15}) has a unique positive solution because, as was shown above, function $u(z):=\frac{z\ln z}{z-1}$  increases  at $z>1$ and $\lim_{z\downarrow 1}u(z)=1$, $\lim_{z\to\infty} u(z)=\infty$ leading to $\frac{Ha}{\alpha}\cdot\frac{e^{\alpha\frac{a}{D}}}{e^{\alpha\frac{a}{D}}-1}\in\left(\frac{DH}{\alpha^2},\infty\right)$ when $a>0$. Positive $g$ and $a_g$ are in 1-1 correspondence by Lemma \ref{l9}(b). 
	
	Since $g_{min} < \hat g=g^*<g_c$, according to Lemma \ref{l9}(c),  $\alpha K+Hg^* a_{g^*}<D$. Therefore, by Theorem \ref{t6}(a), the deterministic stationary strategy $f^*$ is uniformly optimal in the sense of (\ref{e34}) at $g=g^*$. Calculation of ${\cal V}_1(0,f^*)$ is similar to that given above in part (a). One has to substitute zero for $\tau^*$ and replace $a_{g_c}$ with $a^*$:
	$${\cal V}_1(0,f^*)= \frac{\frac{Ha^*}{\alpha}+\frac{HD}{\alpha^2}\left(e^{-\alpha\frac{a^*}{D}}-1\right)}{1-e^{-\alpha\frac{a^*}{D}}}=d.$$
	
	From Theorem \ref{t6}(a) we have
	$${\cal V}_0(0,f^*)=W^*_{g^*}(0)-g^*d=\frac{Dg^*H}{\alpha^2}\left(e^{\alpha\frac{a^*}{D}}-1\right)-g^*d.$$
	
	To complete the prove, one has to repeat the reasoning at the end of part (a).
	\hfill$\square$

	\section{Statements and  Declarations}
	
	The author declare that no funds, grants, or other support were received during the preparation of this manuscript.
	
	The author has no relevant financial or non-financial interests to disclose.

\end{document}